\providecommand{\U}[1]{\protect\rule{.1in}{.1in}}
\providecommand{\U}[1]{\protect\rule{.1in}{.1in}}
\newtheorem{theorem}{Theorem}[section]
\newtheorem{lemma}[theorem]{Lemma}
\newtheorem{proposition}[theorem]{Proposition}
\newenvironment{proof}[1][Proof]{\noindent\textbf{#1.} }{\ \rule{0.5em}{0.5em}}
\begin{document}

\title{On normal subgroups of compact groups}
\author{Nikolay Nikolov and Dan Segal \\ Accepted for publication in J. European Math. Soc.}
\maketitle

\noindent N. Nikolov, Mathematical Institute, Oxford, OX1 6GG,
UK \newline nikolov@maths.ox.ac.uk \newline$\medskip$

\noindent D. Segal, All Souls College, Oxford OX1 4AL, UK \newline
dan.segal@all-souls.ox.ac.uk\bigskip

\begin{center}
\emph{Abstract}$\medskip$
\end{center}

\noindent Among compact Hausdorff groups $G$ whose maximal profinite quotient
is finitely generated, we characterize those that possess a proper dense
normal subgroup. We also prove that the abstract commutator subgroup $[H,G]$
is closed for every closed normal subgroup $H$ of $G$.

\newpage\setcounter{section}{-1}

\section{Introduction}

Let $G$ be a compact Hausdorff topological group with identity component
$G^{0}$, so $G^{0}$ is a connected compact group and $G/G^{0}$ is the maximal
profinite quotient of $G$. We say that $G$ is of \emph{f.g. type} if $G/G^{0}
$ is (topologically) finitely generated. In \cite{NS2} we established a number
of results about groups of this type, including

\begin{theorem}
\label{thm1}The (abstract) derived group $G^{\prime}=[G,G]$ is closed in $G$.
\end{theorem}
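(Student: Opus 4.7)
The plan is to prove that the commutator word has bounded width on $G$: there is an integer $N$ such that every $g \in G' = [G,G]$ is a product of at most $N$ commutators in $G$. This suffices, because then $G'$ is the continuous image of the compact space $G^{2N}$ under the map $(x_1,y_1,\ldots,x_N,y_N) \mapsto [x_1,y_1]\cdots[x_N,y_N]$; hence $G'$ is compact, and therefore closed in the Hausdorff group $G$.

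The natural first step is to exploit the exact sequence $1 \to G^{0} \to G \to G/G^{0} \to 1$. The quotient $G/G^{0}$ is a topologically finitely generated profinite group, so the authors' earlier work on verbal width in such profinite groups provides a bound $N_1$ on the commutator width of $(G/G^{0})'$. Lifting, every $g \in G'$ can be written as $g = c_1 \cdots c_{N_1}\cdot h$, where each $c_i$ is a commutator in $G$ and $h \in G' \cap G^{0}$. It therefore remains to bound the number of commutators needed to produce an arbitrary element of $G' \cap G^{0}$; equivalently, to handle the subgroup $[G,G^{0}]$ together with $[G^{0},G^{0}]$.

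For the compact connected group $G^{0}$, I would use the structure theorem: $G^{0}$ is a quotient $(Z \times S)/F$, with $Z$ a compact connected abelian group, $S$ a (possibly infinite) product of simply connected compact simple Lie groups, and $F$ a central subgroup. On the abelian factor, for any fixed $g \in G$ the map $z \mapsto [g,z]$ is a continuous endomorphism of $Z$, so its image is automatically a closed subgroup; picking $d$ topological generators of $G/G^{0}$ then shows that the contribution of $Z$ to $[G,G^{0}]$ is generated by $d$ closed subgroups, which remains closed and of bounded width. On the semisimple factor $S$, Goto's theorem (every element of the commutator subgroup of a compact connected group is itself a commutator) handles each simple factor, and the conjugation action of $G$ permutes the factors; combining these contributions with the profinite reduction above yields a bound on the total commutator width of $G$.

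The principal obstacle I anticipate is the semisimple part: $S$ can be an infinite product and $G$ may permute its simple factors in intricate ways, so uniform control over commutator width on $S$ does not follow from Goto's theorem alone. One must exploit the finite generation of $G/G^{0}$ to control the orbit structure of $G$ on the set of simple factors, and show that commutators involving a single $g \in G$ have a sufficiently structured ``spread'' across factors that a uniform bound survives. Once this structural and combinatorial analysis is in place, the bounded-width argument outlined above closes $G'$ and proves the theorem.
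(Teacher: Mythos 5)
Your high-level strategy (bound the commutator width, then use compactness to get closedness) is sound, and it is also the engine behind the paper's argument. But the proposal does not actually close the gap you yourself flag at the end, and that gap is the whole theorem. The paper explicitly observes that the result is known when $G$ is profinite (Theorem~1.4 of~\cite{NS1}) and, separately, when $G$ is connected (via \cite{HM}, Theorem~9.24); what ``seems harder'' is exactly the mixed case, i.e.\ the term $[G,G^0]$ where a profinite group of automorphisms acts on a connected group. Your outline reproduces the two easy cases -- profinite width from \cite{NS1}, and Goto's theorem for the connected semisimple factor -- and then says that ``one must exploit the finite generation of $G/G^0$ to control the orbit structure of $G$ on the simple factors'' and ``show that commutators involving a single $g$ have a sufficiently structured spread.'' That sentence is a statement of the problem, not a proof. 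Goto's theorem controls $[S,S]$ for each simple factor $S$, but says nothing about $[g,S]$ when $g$ permutes or twists the factors; the possibly infinite product of simple factors is precisely where a uniform bound has to be manufactured, and without an argument the proof does not go through.

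There are also secondary soft spots in the outline. When $G/G^0$ is an infinite profinite group one cannot pick a finite set of coset representatives $g_1,\dots,g_d$ of $G^0$ in $G$, so the reduction of $[G,Z]$ to $\prod_i[g_i,Z]$ does not literally apply at the level of $G$ itself. Moreover, the lifting step reduces to controlling elements of $G'\cap G^0$; identifying this with $[G,G^0]$ (or combining the $Z$- and $S$-contributions across the central extension $F$) requires additional justification that is elided.

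For comparison, the paper's route to the generalization (Theorem~\ref{[H,G]thm}, of which Theorem~\ref{thm1} is the case $H=G$) is a two-stage reduction designed precisely to cope with this mixed term. First, it approximates $G$ by its compact Lie group quotients $G/N$ for $N$ ranging over the filter $\mathcal{N}$: since $\mathcal{X}^{*f}$ is compact for any fixed $f$, it suffices to establish a bound $f$ depending only on $d(G/G^0)$ such that $[H,G]N=\mathcal{X}^{*f}N$ for every $N\in\mathcal{N}$; this is Theorem~\ref{T3}. Second, inside a compact Lie group, a chain of reductions (killing the connected semisimple part of $H\cap S$ via Goto's theorem, centralizing $H\cap G^0$, passing to a finite complement $\Gamma$) reduces matters to a genuinely finite group statement, Theorem~\ref{newcomm}, whose proof in turn rests on the ``Key Theorem'' variant Proposition~\ref{KT}. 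The Lie quotient step and the bespoke finite group theorem are exactly the mechanism that glues the profinite and connected parts together; your outline names the two parts but supplies no glue, and that is where the proof would have to go.
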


\begin{theorem}
\label{vdns}$G$ has a virtually-dense normal subgroup of infinite index if and
only if $G$ has an open normal subgroup $G_{1}$ such that \emph{either}
$G_{1}$ has infinite abelianization \emph{or} $G_{1}$ has a strictly infinite
semisimple quotient.
\end{theorem}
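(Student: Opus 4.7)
The plan is to prove the two implications separately.

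For $(\Leftarrow)$ I handle the two alternatives in turn. Suppose first that $G_1\lhd G$ is open with $A=G_1/[G_1,G_1]$ infinite. Since $G_1$ inherits f.g.\ type, Theorem~\ref{thm1} applied to $G_1$ shows $[G_1,G_1]$ is closed, so $A$ is an infinite compact abelian group carrying an action of the finite group $G/G_1$. Using the structure theory of compact abelian groups, one finds a $G/G_1$-invariant proper dense subgroup $H<A$ of infinite index: pass to an appropriate $G/G_1$-equivariant continuous quotient of $A$ of pro-$p$ type in which a canonical dense subgroup of the form $\mathbb{Z}\hookrightarrow\mathbb{Z}_p$ or $\bigoplus_\alpha C_\alpha\hookrightarrow\prod_\alpha C_\alpha$ is automatically invariant under every continuous automorphism, and pull back. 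The preimage in $G_1$ of such an $H$ is the required virtually-dense normal subgroup of $G$ of infinite index. In the other subcase, $G_1$ admits a continuous surjection $\pi\colon G_1\twoheadrightarrow S=\prod_{i\in I}S_i$ with $|I|=\infty$ and each $S_i$ simple. Arranging $\ker\pi$ to be $G$-invariant if necessary (by replacing $\pi$ with the product of its finitely many $G$-conjugate surjections and cutting down to a sub-product of simple factors), we may assume $G/G_1$ permutes the simple factors of $S$ in finite orbits; the restricted direct product $\bigoplus_{i\in I}S_i\subset S$ is then $G$-invariant, dense, proper, and of infinite index in $S$, and its preimage in $G_1$ is the required $N$.

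For $(\Rightarrow)$, let $N\lhd G$ be virtually dense of infinite index and put $G_0=\overline N$: an open normal subgroup of $G$, still of f.g.\ type, in which $N$ is a proper dense normal subgroup. By the Nikolov--Segal results of~\cite{NS2}, every finite-index subgroup of $G_0$ is open, so the proper dense $N$ must have infinite index in $G_0$. It thus suffices to show that $G_0$---or some open normal subgroup of it---has infinite abelianization or a strictly infinite semisimple quotient.

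The main obstacle is therefore the structural dichotomy: a compact group $H$ of f.g.\ type, no open normal subgroup of which has infinite abelianization or a strictly infinite semisimple quotient, admits no proper dense normal subgroup. The plan for this step is to combine Theorem~\ref{thm1} with the Nikolov--Segal theory of~\cite{NS2}---in particular, closure of verbal subgroups and uniform width bounds for commutators in topologically finitely generated profinite groups---applied separately to the identity component $H^0$ (decomposing it as a central extension of a compact connected semisimple group by its toral centre) and to the profinite quotient $H/H^0$. The absence of a strictly infinite semisimple quotient restricts the simple constituents available at every layer---both the simple factors of the semisimple part of $H^0$ and the finite simple sections of $H/H^0$---to a controllable finite collection; combined with the restriction on abelianizations, this should force every abstract normal subgroup of $H$ to equal its closure, contradicting the existence of the proper dense $N$.
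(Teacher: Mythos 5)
The theorem you set out to prove is quoted in the Introduction as a result already established in~\cite{NS2}; the present paper does not prove it, so there is no internal proof against which to compare your attempt directly. What the paper proves is the related and sharper Theorem~\ref{dns}, characterizing when $G$ has a \emph{dense} proper normal subgroup; that version cannot pass to a proper open subgroup and accordingly needs the extra alternative of Q-almost-simple quotients of unbounded ranks.

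Assessed on its own terms, the $(\Leftarrow)$ direction has the right shape, but its key justification in the abelian case is false. It is not true that $\mathbb{Z}\hookrightarrow\mathbb{Z}_p$, or $\bigoplus_\alpha C_\alpha\hookrightarrow\prod_\alpha C_\alpha$, is stable under every continuous automorphism of the ambient group: multiplication by any unit $u\in\mathbb{Z}_p^\times\smallsetminus\{\pm1\}$ moves $\mathbb{Z}$, and the continuous automorphism of $\prod_{i\in\mathbb{N}}\mathbb{F}_p$ sending $e_i\mapsto\sum_{k\geq i}e_k$ maps $e_1$ to $(1,1,1,\dots)$, which lies outside the restricted sum. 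The desired $(G/G_1)$-invariant dense proper subgroup of $A=G_1^{\mathrm{ab}}$ of infinite index does exist, but a robust route is a cardinality count: any infinite compact Hausdorff group $A$ has a dense subset $D$ with $|D|<|A|$ (density is at most weight, which is strictly less than $2^{\text{weight}}=|A|$); the $\mathbb{Z}[G/G_1]$-submodule generated by $D$ is then invariant, dense, of cardinality $<|A|$ hence proper, and of infinite index, since a dense finite-index subgroup of a compact group would be open, hence closed, hence all of $A$.

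The $(\Rightarrow)$ direction is where the whole content of the theorem lies, and your plan does not engage with it. The reduction to $G_0=\overline N$ and the use of strong completeness to conclude that $N$ has infinite index in $G_0$ are both correct, but the sentence asserting that the absence of the two structural features ``should force every abstract normal subgroup to equal its closure'' simply restates what has to be proved. Compare the depth of the paper's proof of the dense variant: it needs Proposition~\ref{G_0}, fine fixed-point estimates for automorphisms of finite simple groups (Lemmas~\ref{notinQ(S)}--\ref{ranklemma}, with a case analysis over the classical types via Lang's theorem), and a parallel Lie-theoretic argument (Proposition~\ref{outer} and Lemma~\ref{brank}). Something of comparable substance is required. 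There is also a small gap in your reduction: an open normal subgroup of $G_0$ is open in $G$ but need not be normal in $G$; you must intersect its $G$-conjugates and then check that infinite abelianization, respectively a strictly infinite semisimple quotient, survives passage to an open subgroup (both do, but these verifications have to be supplied).
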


\noindent(A \emph{strictly infinite semisimple group} is a Cartesian product
of infinitely many finite simple groups or compact connected simple Lie
groups). The purpose of this note is (1) to generalize Theorem \ref{thm1}, and
(2) to characterize the compact groups of f.g. type that possess
a\emph{\ dense} proper normal subgroup (which must then have infinite index,
by the main result of \cite{NS1}); this analogue to Theorem \ref{vdns} is a
little technical to state: see Theorem \ref{dns} in Section \ref{dnssec} below.

Here and throughout, \emph{simple} group is taken to mean \emph{non-abelian}
simple group.

\section{Commutators}

\subsection{The main result}

\begin{theorem}
\label{[H,G]thm}Let $G$ be a compact Hausdorff group of f.g. type and $H$ a
closed normal subgroup of $G$. Then the (abstract) commutator subgroup $[H,G]
$ is closed in $G$.
\end{theorem}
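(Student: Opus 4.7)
The plan is to bound the abstract commutator width of $[H,G]$ uniformly: exhibit an integer $N$, depending only on $G$, such that every element of $[H,G]$ is a product of at most $N$ commutators $[h_i,g_i]$ with $h_i\in H$ and $g_i\in G$. Once such an $N$ is known, $[H,G]$ is the image of the compact space $(H\times G)^N$ under the continuous map $(h_i,g_i)\mapsto\prod[h_i,g_i]$, hence is compact and therefore closed in $G$.

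The first step is to reduce modulo the identity component $G^0$. The image $\overline H$ of $H$ in the finitely generated profinite quotient $\overline G=G/G^0$ is closed and normal, and the quantitative commutator-width results that underlie Theorem \ref{thm1} in the profinite setting (namely the authors' earlier result that $[\overline H,\overline G]$ has bounded commutator width in any finitely generated profinite group) give uniform width for the image. Pulling back, one is left with bounding the width of a portion of $[H,G]\cap G^0$, modulo a bounded error term consisting of commutators $[h,g_j]$ where the $g_j$ run over a fixed finite set of coset representatives for $G/G^0$.

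The second step is to exploit the structure theory of compact connected groups. A finite-index closed characteristic subgroup of $G^0$ has the form $Z\cdot S$, where $Z$ is a closed connected central subgroup (a quotient of a protorus) and $S=\overline{(G^0)'}$ is a Cartesian product of compact connected simple Lie groups. Got\^o's theorem gives commutator width $1$ in each compact connected simple Lie factor, and by a careful amalgamation argument, using that $G/G^0$ acts on the indexing set of simple factors through a finitely generated profinite group, one obtains a uniform width bound for the semisimple part of $[H\cap G^0,G]$. The contribution to the central part $Z$ is abelian and is absorbed by a bounded number of commutators involving a fixed topological generating set of $G/G^0$.

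The principal obstacle will be uniformity of the bound across all closed normal subgroups $H$: the projections of $H$ into the various simple factors of $S$ can be permuted and intertwined by the conjugation action of $G/G^0$, and these orbits may be infinite and of unbounded local complexity. Overcoming this requires a Nikolov--Segal-style bounded-width theorem for commutators in a Cartesian product of simple groups acted on by a finitely generated (profinite or discrete) group, applied uniformly to all invariant subgroups; this is the analogue for $[H,G]$ of the arguments used for $[G,G]$ in \cite{NS2}. Combining this semisimple input with the profinite reduction yields the desired uniform $N$, and hence the closedness of $[H,G]$.
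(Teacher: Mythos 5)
The high-level strategy you adopt---bound the abstract commutator width of $[H,G]$ by a constant $N$, then conclude that $[H,G]=\mathcal{X}^{\ast N}$ is a continuous image of a compact space and hence closed---is exactly the paper's opening move. You also correctly anticipate that some new ``Key Theorem''-type ingredient in the style of \cite{NS2} will be needed. But the route you sketch diverges from the paper in ways that matter, and the crucial step is deferred rather than supplied.

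First, you omit the reduction to Lie group quotients. The paper does not try to bound width in $G$ directly: it shows that it suffices to prove $[H,G]=\mathcal{X}^{\ast f}$ inside every Lie quotient $G/N$, with $f$ depending only on $\mathrm{d}(G/G^{0})$, and then recovers closedness of $[H,G]$ in $G$ from the identity $\bigcap_{N}\mathcal{X}^{\ast f}N=\overline{\mathcal{X}^{\ast f}}=\mathcal{X}^{\ast f}$. This inverse-limit step is what turns a uniform statement about finite-dimensional quotients into a statement about $G$; your proposal argues directly in $G$ and never addresses how a width bound there would be established.

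Second, your split of the problem into a ``profinite part'' handled by $G/G^{0}$ and a ``connected part'' handled by $G^{0}$ does not cleanly decompose $[H,G]$. A commutator $[h,g]$ with $h\in H$, $g\in G$ can lie in $G^{0}$ without $h$ or $g$ lying in $G^{0}$, and $[H,G]\cap G^{0}$ is not $[H\cap G^{0},G^{0}]$ or anything of that shape. The paper instead performs a sequence of structural reductions inside a fixed Lie quotient: it observes that the connected part of $HZ\cap S$ is connected semisimple (so made of single commutators), then that after factoring this out $H\cap G^{0}$ can be pushed into the centre, then that $G^{0}$ centralizes $H$, and finally that the problem reduces to a \emph{finite} group $E=\Delta\Gamma$ with $H$ replaced by a finite $\Delta$.

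Third, and most importantly, you identify the technical engine incorrectly. You say the obstacle is ``uniformity over $H$'' coming from the permutation action of $G/G^{0}$ on the simple factors of $S$, and propose a ``bounded-width theorem for commutators in a Cartesian product of simple groups acted on by a finitely generated profinite group.'' No such theorem appears in the paper's proof, nor is it what is needed. After the reductions described above, the simple factors are entirely gone (they have been absorbed via Goto's theorem and the semisimple observation), and what remains is a finite group problem: Theorem~\ref{newcomm} of the paper, asserting bounded width of $[H,G]$ in a finite group $G=H\langle y_1,\dots,y_r\rangle=A\langle y_1,\dots,y_r\rangle$ with $A$ abelian normal centralizing $H$. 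Proving it requires the ``beefed up'' version of the Key Theorem (Proposition~\ref{KT} in the paper), which allows for a centralizing normal subgroup $C\leq \mathrm{C}_{G}(H)$; this extra $C$ is precisely what is needed to accommodate the fact that $G^{0}$ centralizes $H$ in the reduced situation. Your proposal replaces this concrete statement with an unstated theorem and supplies no proof of it, so the argument has a genuine gap at exactly the point where all the hard work lies.
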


When $G$ is profinite this is Theorem 1.4 of \cite{NS1}; when $G$ is
connected, it may be deduced quite easily from known results (cf. \cite{HM},
Theorem 9.24). The general case, however, seems harder: to prove it we have to
beef up slightly the main technical result of \cite{NS2}; the beefed-up
version appears as Proposition \ref{KT} below.

Now let $G$ and $H$ be as in Theorem \ref{[H,G]thm}. For a subset
$\mathcal{X}$ of $G$ we let $\overline{\mathcal{X}}$ denote the closure of
$\mathcal{X}$ in $G$, and for $f\in\mathbb{N}$ write%
\[
\mathcal{X}^{\ast f}=\left\{  x_{1}x_{2}\ldots x_{f}\mid x_{1},\ldots,x_{f}%
\in\mathcal{X}\right\}  .
\]
The minimal size of a finite topological generating set for $G$ (if there is
one) is denoted $\mathrm{d}(G)$.

$\medskip$

\emph{First reduction.}$\medskip$

Let $\mathcal{N}$ denote the set of closed normal subgroups $N$ of $G$ such
that $G/N$ is a Lie group. Set%
\begin{equation}
\mathcal{X}=\left\{  [h,g]\mid h\in H,~g\in G\right\}  . \label{X}%
\end{equation}
Then $\mathcal{X}$ is a compact set, so $\mathcal{X}^{\ast f}$ is closed for
each finite $f$. Suppose that for some $f$ we have%
\[
\lbrack H,G]N=\mathcal{X}^{\ast f}N~\forall N\in\mathcal{N}.
\]
Then%
\[
\lbrack H,G]\subseteq\bigcap_{N\in\mathcal{N}}\mathcal{X}^{\ast f}%
N=\overline{\mathcal{X}^{\ast f}}=\mathcal{X}^{\ast f}\subseteq\lbrack H,G]
\]
(cf. \cite{HM}, Lemma 9.1), so $[H,G]$ is closed.

Thus it will suffice to prove

\begin{theorem}
\label{T3}Let $G$ be a compact Lie group and $H$ a closed normal subgroup of
$G$. Then
\[
\lbrack H,G]=\mathcal{X}^{\ast f}%
\]
where $\mathcal{X}$ is given by (\ref{X}) and $f$ depends only on
$\mathrm{d}(G/G^{0})$.
\end{theorem}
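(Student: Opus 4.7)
\medskip
\textbf{Proof plan.} Since $G$ is a compact Lie group, $G^{0}$ is open in $G$ and $G/G^{0}$ is finite, topologically generated by $d:=\mathrm{d}(G/G^{0})$ elements. The plan is to split $[H,G]$ along the short exact sequence $1\to G^{0}\to G\to G/G^{0}\to 1$: treat the finite-quotient part via the main result of \cite{NS1}, and then bound what remains in $G^{0}\cap[H,G]$ via compact Lie theory together with Proposition \ref{KT}.

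First I would apply Theorem 1.4 of \cite{NS1} to the finite $d$-generated group $\overline{G}:=G/G^{0}$ and its normal subgroup $\overline{H}:=HG^{0}/G^{0}$: this yields $f_{1}=f_{1}(d)$ such that every element of $[\overline{H},\overline{G}]$ is a product of at most $f_{1}$ commutators $[\bar h_{i},\bar g_{i}]$. Lifting arbitrarily to $H$ and $G$, any $x\in[H,G]$ can be written
\[
x=[h_{1},g_{1}]\cdots [h_{f_{1}},g_{f_{1}}]\cdot y,\qquad y\in G^{0}\cap [H,G].
\]
The classical connected result (\cite[Theorem 9.24]{HM}) already expresses every element of $[H\cap G^{0},G^{0}]$ as a bounded product of commutators from $\mathcal X$, which handles the ``purely connected'' contribution to $y$.

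The main obstacle is that $G^{0}\cap[H,G]$ is typically strictly larger than $[H\cap G^{0},G^{0}]$: for example, taking $G=H=O(2)$ gives $[H,G]=SO(2)$ while $[H\cap G^{0},G^{0}]=1$. The extra elements arise from ``twisted'' commutators $[h,g]$ whose factors lie outside $G^{0}$ but whose value happens to land in $G^{0}$; neither the NS1 toolkit nor connected-Lie theory controls these on its own, since they mix the Lie-algebraic structure of $G^{0}$ with the conjugation action of the finite quotient. This is precisely the role of Proposition \ref{KT}, the beefed-up version of the main technical result of \cite{NS2}: fixing a lift of $d$ topological generators of $G/G^{0}$ and iterating the twisted-commutator machinery inside the compact Lie group $G^{0}$, it should produce a bound $f_{2}=f_{2}(d)$ on the number of commutators from $\mathcal X$ needed to express any element of $G^{0}\cap [H,G]$. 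Combining the two contributions gives $f=f_{1}+f_{2}$, depending only on $d$, as required.
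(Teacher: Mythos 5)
Your opening step (peeling off the finite-quotient part via the NS1 width bound and noting $y\in G^{0}\cap[H,G]$) is sound, and the observation that $G^{0}\cap[H,G]$ is generally strictly larger than $[H\cap G^{0},G^{0}]$ is exactly the right thing to worry about; the $O(2)$ example illustrates it cleanly. But the plan stalls precisely there. Proposition \ref{KT} is a statement about \emph{finite} groups, with strong hypotheses: $H$ is required to be soluble, to satisfy $H=[H,G]$, and to admit a normal $C\leq\mathrm{C}_{G}(H)$ with $G=C\langle g_{1},\ldots,g_{r}\rangle$. None of these are available for the compact Lie group $G^{0}$ and the subgroup $G^{0}\cap[H,G]$ you want to control, so ``iterating the twisted-commutator machinery inside $G^{0}$'' is not something Proposition \ref{KT} licenses; it cannot be applied until the problem has already been reduced to a finite one.

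What the paper actually does is very different from splitting along $1\to G^{0}\to G\to G/G^{0}\to 1$. It first uses the Lie structure to \emph{shrink} $H$: the connected semisimple piece $H_{2}=(HZ\cap S)^{0}$ is absorbed into $\mathcal{X}$ outright (every element of a compact connected semisimple group is a single commutator, \cite[Cor.\ 6.56]{HM}); the residual central torus piece $H_{3}=H\cap G^{0}\leq Z(G^{0})$ is absorbed into $\mathcal{X}^{\ast d}$ because $H_{3}$ is abelian, so $[H_{3},G]=\prod_{i}[H_{3},g_{2i}]$. After these quotients one proves $G^{0}\leq\mathrm{C}_{G}(H)$ via the $n$-th-power/torus argument, then uses \cite[Thm.\ 6.74]{HM} twice to extract finite complements $\Delta\vartriangleleft G$ and $E=\Delta\Gamma$, with an abelian normal $A=E\cap G^{0}$ centralizing $\Delta$. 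Only at this point does one land in the finite-group statement Theorem \ref{newcomm}, and it is inside the proof of \emph{that} theorem that Proposition \ref{KT} (together with \cite[Thm.\ 1.2]{NS2}) is invoked. Your proposal omits this whole reduction and in particular omits Theorem \ref{newcomm}, which is the actual bridge. So the proposal identifies the ingredients but has a genuine gap at its central step: it assumes Proposition \ref{KT} can be applied directly to the Lie-group remainder, which it cannot.
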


\emph{Second reduction.}$\medskip$

We assume now that $G$ is a compact Lie group, with $\mathrm{d}(G/G^{0})=d$.
Then $G/G^{0}$ is finite and $G=G^{0}\Gamma$ for some finite subgroup
$\Gamma=\left\langle g_{1},\ldots,g_{2d}\right\rangle $ where $g_{2i}%
=g_{2i-1}^{-1}$ for each $i$; also $G^{0}=ZS$ where $Z=\mathrm{Z}(G^{0})$ and
$S$ is connected semisimple (cf. \cite{HM} Theorems 6.36, 6.15, 6.18).

Put $H_{1}=HZ\cap S$, $H_{2}=H_{1}^{0}$. Then $H_{2}$ is connected semisimple,
so every element of $H_{2}$ is a commutator (\cite{HM} Cor. 6.56); it follows
that $H_{2}\subseteq\mathcal{X}$. So replacing $G$ by $G/H_{2}$ we may suppose
that $H_{1}$ is finite, which implies that $H_{1}\leq Z$ since $H_1$ is normal in $S$ and $S$ is connected. Then $H_{3}:=H\cap
G^{0}\leq ZH_{1}=Z$.

As $H_{3}$ is abelian we have%
\[
\lbrack H_{3},G]=[H_{3},\Gamma]=\prod_{i=1}^{d}[H_{3},g_{2i}]\subseteq
\mathcal{X}^{\ast d},
\]
and $\mathcal{X}^{\ast d}$ is closed; so replacing $G$ by $G/\overline
{[H_{3},G]}$ we may suppose that $H_{3}\leq\mathrm{Z}(G)$.

Let $x\in H$ and $y\in G^{0}$. Then $x^{n}\in H_{3}$ where $n=\left\vert
G:G^{0}\right\vert $, and there exists $v\in G^{0}$ such that $y=v^{n}$ (every
element of $G^{0}$ is an $n$th power, because it lies in a torus: see
\cite{HM}, Theorem 9.32). Now%
\[
1=[x^{n},v]=[x,v]^{n}=[x,v^{n}]=[x,y]\text{.}%
\]
Thus $G^{0}\leq\mathrm{C}_{G}(H)$.

Put $D=H\Gamma\cap G^{0}$. Then $H\Gamma=D\Gamma$. Since $H\Gamma/H_{3}$ is
finite, we have $H_{3}\geq(H\Gamma)^{0}=H^{0}:=H_{4}$. There exists a finite
subgroup $L$ of $H\Gamma$ such that $H\Gamma=H_{4}L$ (\cite{HM}, Theorem
6.74). Then $L\vartriangleleft H\Gamma$ (since $H_4 \leq \mathrm{Z}(G)$) and $\Delta:=L\cap H\vartriangleleft
G^{0}\Gamma=G$ since $[H,G^0]=1$. Moreover $H=H_{4}\Delta$; and $[H,G]=[\Delta,G]$.

Applying \cite{HM}, Theorem 6.74 to the group $G/\Delta$, we find a finite
subgroup $Q/\Delta$ of $G/\Delta$ such that $G=G^{0}Q$ and $Q/\Delta \cap (G/\Delta)^0 \leq \mathrm{Z}((G/\Delta)^0)$, hence $Q\cap G^{0}%
\Delta\vartriangleleft G$. Then $Q\cap G^{0}$ is central in $G^{0}$. Replacing
each $g_{i}$ by an element of $G^{0}g_{i}\cap Q$, we may suppose that
$\Gamma\leq Q$. Now putting%
\[
E=\Delta\Gamma \leq Q, \ \textrm{and }A=E\cap G^{0} \leq \mathrm{Z}(G^0)
\]
we have%
\begin{align*}
E  &  =\Delta\Gamma=A\Gamma,\\
\lbrack A,A\Delta]  &  =[A\cap\Delta,E]=1,\\
\lbrack H,G]  &  =[\Delta,\Gamma]=[\Delta,E].
\end{align*}

\emph{Conclusion of the proof.}$\medskip$

In the following subsection we establish

\begin{theorem}
\label{newcomm}Let $G$ be a finite group, $H$ a normal subgroup of $G$ and
$\{y_{1},\ldots,y_{r}\}$ a symmetric subset of $G$. Suppose that
$G=H\left\langle y_{1},\ldots,y_{r}\right\rangle =A\left\langle y_{1}%
,\ldots,y_{r}\right\rangle $ where $A$ is an abelian normal subgroup of $G$
with $[A,H]=1$. Then%
\[
\lbrack H,G]=\left(
{\displaystyle\prod\limits_{i=1}^{r}}
[H,y_{i}]\right)  ^{\ast f_{1}}%
\]
where $f_{1}=f_{1}(r)$.
\end{theorem}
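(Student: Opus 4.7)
The plan is to deduce Theorem \ref{newcomm} from the beefed-up version of the main technical result of \cite{NS2}, namely Proposition \ref{KT}, which will be established in the following subsection.

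\emph{Reduction.} Set $\Gamma=\langle y_1,\ldots,y_r\rangle$, so that $G=A\Gamma$. The first observation is that $[H,G]=[H,\Gamma]$: any $g\in G$ may be written $g=a\gamma$ with $a\in A$ and $\gamma\in\Gamma$, and for $h\in H$ the commutator identity
\[
[h,a\gamma]=[h,\gamma]\cdot[h,a]^{\gamma}
\]
reduces to $[h,\gamma]$, because $[H,A]=1$. Hence it is enough to express each element of $[H,\Gamma]$ as a product of $f_1$ factors, each of the form $[h_1,y_1][h_2,y_2]\cdots[h_r,y_r]$ with $h_i\in H$.

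\emph{Application of Proposition \ref{KT}.} This is precisely the conclusion one obtains by applying Proposition \ref{KT} with the given $G$, $H$, $Y=\{y_1,\ldots,y_r\}$ and $A$. All the required hypotheses are present by assumption: $G=H\langle Y\rangle=A\langle Y\rangle$ is finite, $Y$ is symmetric of cardinality $r$, $A$ is abelian and normal, and $[A,H]=1$. The proposition then supplies the integer $f_1=f_1(r)$ and the desired equality
\[
[H,G]=\Bigl(\prod_{i=1}^{r}[H,y_i]\Bigr)^{*f_1}.
\]

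\emph{The main obstacle} is therefore not this short reduction but Proposition \ref{KT} itself, which is a sharpening of the main technical result of \cite{NS2}. The NS2 statement handles (essentially) the case $A=1$, giving a bounded expression for $[H,G]$ in terms of the specific commutators $[h,y_i]$; the beefing up needed here is to allow an abelian normal ``cushion'' $A$ that centralizes $H$ but which may overlap $H$ non-trivially and contributes to the generation of $G$ together with $Y$. I expect the heart of that argument to lie in the inductive step, where one passes to a quotient and must ensure that both the bounded symmetric generating tuple and the centralizing abelian $A$ survive in the quotient without disturbing the commutator-width bound; the hypothesis $[A,H]=1$ should be precisely what is needed to prevent $A$ from interfering with the commutator manipulations that drive the NS2 induction.
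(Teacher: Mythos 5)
Your reduction $[H,G]=[H,\Gamma]$ is correct but easy; the genuine gap is in the next step, where you apply Proposition \ref{KT} directly to the given $H$. That proposition (as actually proved in the paper) has two hypotheses your $H$ need not satisfy: it requires $H=[H,G]$ and, crucially, it requires $H$ to be \emph{soluble} (it is a beefed-up version only of the easier, soluble case of the Key Theorem of \cite{NS2}, with the extra normal subgroup $C\leq\mathrm{C}_G(H)$ playing the role of your cushion $A$). Moreover its conclusion expresses elements of $H$ itself, not of $[H,G]$, as bounded products of the commutators $[H,g_j]$, so even the soluble case cannot be quoted for a general normal $H$ without first reducing to $H=[H,G]$. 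No amount of care in the inductive step of Proposition \ref{KT} removes the solubility restriction, since quasi-minimal normal subgroups of a non-soluble $H$ need not be abelian-by-central, and the counting arguments used there (module structure of $\overline{N}$, Proposition \ref{NS6.2}) are specific to the soluble situation.

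The paper closes this gap with two reductions you are missing. First, taking $K=[H,{}_nG]$ with $K=[K,G]$, the identity $[H,G]=\prod_{i=1}^r[H,y_i]\cdot K$ (\cite{W}, Prop.\ 1.2.5) lets one replace $H,G,A$ by $K$, $G_1=K\Gamma$, $A_1=A\cap G_1$ and assume $H=[H,G]$ (cost: one extra factor). Second, to handle non-solubility one observes that $AH\cap\Gamma$ is centralized by $A$ and normalized by $\Gamma$, hence normal in $G$, and $AH=A(AH\cap\Gamma)$, so $H'=(AH)'=(AH\cap\Gamma)'\leq\Gamma$; this makes $[H',G]=[H',\Gamma]$ accessible to the full Key Theorem of \cite{NS2} (Theorem 1.2 there), which gives $[H',G]=\bigl(\prod_{i=1}^r[H',y_i]\bigr)^{\ast f_0(r)}$. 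Only after factoring out $[H',G]$, so that $H=[H,G]$ becomes nilpotent and in particular soluble, does Proposition \ref{KT} (with $C=A$) apply to give $H=\bigl(\prod_{i=1}^r[H,y_i]\bigr)^{\ast 3k(r)}$, whence $f_1=1+f_0(r)+3k(r)$. The paper explicitly flags that the hard, non-soluble strength of the Key Theorem is invoked precisely at the citation of \cite{NS2}, Theorem 1.2, so a proof that relies on Proposition \ref{KT} alone, as yours does, cannot work as stated.
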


Applying this with $E,\Delta$ in place of $G,H$ we obtain%
\[
\lbrack H,G]=[\Delta,E]=\left(
{\displaystyle\prod\limits_{i=1}^{2d}}
[\Delta,g_{i}]\right)  ^{\ast f_{1}(2d)}.
\]
Taking account of all the reductions, we see that Theorem \ref{T3} follows,
with%
\[
f=1+d+2df_{1}(2d).
\]

\subsection{A variant of the `Key Theorem'}

Theorem \ref{newcomm} depends on the following proposition. We recall some
notation from \cite{NS2}; throughout this subsection $G$ will be a finite
group.$\medskip$

\noindent\textbf{Notation} For $\mathbf{g},\mathbf{v}\in G^{(m)}$ and $1\leq
j\leq m$,%
\[
\tau_{j}(\mathbf{g},\mathbf{v})=v_{j}[g_{j-1},v_{j-1}]\ldots\lbrack
g_{1},v_{1}],
\]

\[
\mathbf{v}\cdot\mathbf{g}=(v_{1}g_{1},\ldots,v_{m}g_{m}),~~~\mathbf{c}%
(\mathbf{v},\mathbf{g})=%
{\displaystyle\prod\limits_{j=1}^{m}}
[v_{j},g_{j}].
\]

\begin{proposition}
\label{KT}There exists a function $k:\mathbb{N}\rightarrow\mathbb{N}$ with the
following property. Let $G$ be a finite group, $H=[H,G]$ a soluble normal
subgroup of $G$, and $C\leq\mathrm{C}_{G}(H)$ a normal subgroup of $G$.
Suppose that $G=C\left\langle g_{1},\ldots,g_{r}\right\rangle $. Put $m=r\cdot
k(r)$, and for $1\leq j<k(r)$ and $1\leq i\leq r$ set%
\[
g_{i+jr}=g_{i}.
\]
Then for each $h\in H$ there exist $\mathbf{v}(i)\in H^{(m)}$ ($i=1,2,3$) such
that%
\begin{equation}
h=%
{\displaystyle\prod\limits_{i=1}^{3}}
\mathbf{c}(\mathbf{v}(i),\mathbf{g}) \label{hprod}%
\end{equation}
and%
\begin{equation}
C\left\langle g_{1}^{\tau_{1}(\mathbf{g},\mathbf{v}(i))},\ldots,g_{m}%
^{\tau_{m}(\mathbf{g},\mathbf{v}(i))}\right\rangle =G\qquad\text{for
}i=1,~2,~3. \label{G-gen}%
\end{equation}

\end{proposition}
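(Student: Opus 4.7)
My plan is to derive Proposition \ref{KT} as a mild strengthening of the main technical result (the ``Key Theorem'') of \cite{NS2}, which already furnishes a factorisation of the form (\ref{hprod}) under the same hypotheses but without the generation condition (\ref{G-gen}). The task is therefore to install (\ref{G-gen}) on top of (\ref{hprod}).

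The starting observation is that each $v_j$ lies in $H$ and $H$ is normal in $G$, so every partial product $\tau_j(\mathbf{g},\mathbf{v})$ lies in $H$; hence $g_j^{\tau_j}=g_j\cdot[g_j,\tau_j]$ differs from $g_j$ by an element of $[G,H]\leq H$. Modulo $C$ the conjugated generator is then an $H$-perturbation of $g_j$, and since $g_1,\ldots,g_r$ already generate $G$ modulo $C$, the condition (\ref{G-gen}) asserts that these perturbations do not simultaneously fall inside a common maximal subgroup of $G$ containing $C$.

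To arrange this, I would exploit the cyclic repetition $g_{i+jr}=g_i$ to create slack. Take $k(r)=1+k_0(r)$ where $k_0(r)$ is the analogous constant from the original Key Theorem, and require the first $r$ entries of each $\mathbf{v}(i)$ to equal $1$. Then $\tau_j(\mathbf{g},\mathbf{v}(i))=1$ for $1\leq j\leq r$, so $g_j^{\tau_j}=g_j$ for these indices, and (\ref{G-gen}) holds automatically because $G=C\langle g_1,\ldots,g_r\rangle$. The first $r$ commutators in $\mathbf{c}(\mathbf{v}(i),\mathbf{g})$ collapse to $1$, so the remaining $rk_0(r)$ slots---themselves forming $k_0(r)$ full cyclic copies of $g_1,\ldots,g_r$---still carry the content of the factorisation, to which the original Key Theorem applies.

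The main obstacle is confirming that the original argument of \cite{NS2} allows one to fix the first $r$ entries of each $\mathbf{v}(i)$ to be $1$---equivalently, that its factorisation can be produced using any $rk_0(r)$ consecutive cyclic copies of the $g_i$. If the proof there proceeds by induction on the derived length of $H$ with the abelian base handled by module-theoretic means, this positional freedom is inherent and the reduction is routine. Otherwise, the fallback is to re-enter the induction and impose (\ref{G-gen}) at each stage: in the abelian base, (\ref{G-gen}) is a cofinite condition on the $\mathbf{v}(i)$ (one must avoid finitely many maximal subgroups of $G$ containing $C$), which can be met by a coset-avoidance argument using that $H=[H,G]$ acts non-trivially on the corresponding quotients of $G/C$.
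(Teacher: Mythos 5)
Your plan rests on a misreading of what the Key Theorem of \cite{NS2} actually asserts. Theorem 3.10 of \cite{NS2} already contains \emph{both} the factorisation (\ref{hprod}) and the generation condition (\ref{G-gen}) --- the paper says explicitly that Proposition \ref{KT} ``reduces to (a special case of) Theorem 3.10 of \cite{NS2} when $C=1$.'' Indeed the generation condition cannot be stripped away: it is the invariant that is carried along the induction and is what drives the counting at each stage. What has to be \emph{added} is the normal subgroup $C$, not (\ref{G-gen}). Your ``prepend $r$ identity entries'' observation is correct as a formal manoeuvre and would convert a factorisation-only statement into a factorisation-with-generation statement, but no factorisation-only statement over $G$ with $C$ present is available to start from. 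Applying Theorem 3.10 of \cite{NS2} (with $C=1$) to $G/C$ yields (\ref{G-gen}) on the nose, but (\ref{hprod}) only modulo $C$; since both sides of (\ref{hprod}) lie in $H$, the discrepancy lives in $C\cap H\leq\mathrm{Z}(H)$, and because $C$ centralises $H$ a residual central element cannot be absorbed by perturbing the $\mathbf{v}(i)$. So the first approach does not close.

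Your fallback --- re-enter the induction of \cite{NS2} carrying $C$ and (\ref{G-gen}) along --- is the correct instinct and is in fact the route the paper takes, but your sketch of it is too coarse to count as a proof. The paper's induction is not on the derived length of $H$: it is on a chain $H=H_0>H_1>\cdots>H_z=1$ of quasi-minimal normal subgroups, and each step is subdivided further through $K_1=N$, $K_2=N'[Z,G]$, $K_3=[Z,G]$, $K_4=1$. The step $q=1$ is module-theoretic, but (\ref{G-gen}) there is enforced not by naive coset avoidance but by the quantitative $\varepsilon$-fixed-point-space estimates (\cite{NS2}, Theorem 2.1 and Proposition 2.8) comparing $|W(i)|$ with $|\phi(1)^{-1}(c)|$; step $q=2$ needs the separate fibre-size estimate of Proposition \ref{NS6.2} (a variant of \cite{NS1}, Proposition 7.1 with the subgroup $C$ inserted), and step $q=3$ is a direct construction using $[Z,H]=1$. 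None of this appears in your proposal, so the argument as written has a real gap rather than a routine reduction.
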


In fact we can take%
\begin{equation}
k(r)=1+4(r+1)\cdot\max\{r,7\}. \label{k-def}%
\end{equation}

This reduces to (a special case of) Theorem 3.10 of \cite{NS2} when $C=1$. The
latter can be beefed up in a similar way in the general case where $H$ is not
necessarily soluble; as this will not be needed here, we leave it for the
interested reader to fill in the details. The ingredients of the proof (in
both cases) are all taken from \cite{NS2}, Section 3, though they need to be
arranged in a different way.

A normal subgroup $N$ of a group $G$ is said to be \emph{quasi-minimal normal}
if $N$ is minimal subject to%
\[
1<N=[N,G].
\]
Let $Z=Z_{N}$ be a normal subgroup of $G$ maximal subject to $Z<N$. Then
$[Z,_{n}G]=1$ for some $n$, which implies (i) that $Z=N\cap\zeta_{\omega}(G)$
is uniquely determined, and (ii) that $[Z,N]\leq\lbrack Z,H]\leq\lbrack
Z,G_{\omega}]=1$. (Here $\zeta_{\omega}(G)$ denotes the hypercentre and
$G_{\omega}$ the nilpotent residual of $G$; if $G$ is finite, $G_{\omega}$ is
the last term of the lower central series of $G$ and $[\zeta_{\omega
}(G),G_{\omega}]=1$.) An elementary argument (cf. \cite{NS2}, Lemma 3.4) shows
that $Z$ is contained in the Frattini subgroup $\Phi(G)$ of $G$. If $N$ is
soluble then $\overline{N}=N/Z$ is an abelian chief factor of $G$.

\bigskip

We fix $k=k(r)$ as given by (\ref{k-def}). Fix $h\in H$. For
$S\vartriangleleft G$ let us say that $\mathbf{v}=(v(i)_{j})\in H^{(3m)}$
satisfies E($S$), resp. G($S$) if (\ref{hprod}), resp. (\ref{G-gen}) is true
modulo $S$. By hypothesis, E($H$) and G($H$) are satisfied by $\mathbf{v}%
=(1,\ldots,1)$.

Since $H=[H,G]$ there is a chain%
\[
H=H_{0}>H_{1}\geq\ldots\geq H_{z}=1
\]
such that $H_{i-1}/H_{i}$ is a quasi-minimal normal subgroup of $G/H_{i}$ for
$i=1,\ldots,z$. Fix $l<z$ and suppose that $\mathbf{u}\in H^{(3m)}$ satisfies
E($H_{l}$) and G($H_{l}$). Our aim is to find elements $\mathbf{a}(i)\in
H_{l}{}^{(m)}$ such that $\mathbf{v}=\mathbf{a}\cdot\mathbf{u}$ satisfies
E($H_{l+1}$) and G($H_{l+1}$). If we can do this, the proposition will follow
by induction.

To simplify notation we now replace $G$ by $G/H_{l+1}$. Put $N=H_{l}$, now a
soluble quasi-minimal normal subgroup of $G$, and set $Z=Z_{N}$.

The argument is done in three steps. Put $K_{1}=N,$ $K_{2}=N^{\prime}[Z,G],$
$K_{3}=[Z,G]$, $K_{4}=1$. We are given that $\mathbf{u}$ satisfies E($N$) and
G($N$). Fix $q\leq3$ and suppose that $\mathbf{u}$ satisfies E($K_{q}$) and
G($K_{q}$); we will find $\mathbf{a}(i)\in N^{(m)}$ such that $\mathbf{v}%
=\mathbf{a}\cdot\mathbf{u}$ satisfies E($K_{q+1}$) and G($K_{q+1}$). Again, to
simplify notation we may replace $G$ by $G/K_{q+1}$ and so assume that
$K_{q+1}=1$, and set $K=K_{q}$. Thus we have to show that (\ref{hprod}) and
(\ref{G-gen}) hold.

\begin{lemma}
\label{u-v}%
\[
\left(
{\displaystyle\prod\limits_{i=1}^{3}}
\mathbf{c}(\mathbf{a}(i)\cdot\mathbf{u}(i),\mathbf{g)}\right)  \left(
{\displaystyle\prod\limits_{i=1}^{3}}
\mathbf{c}(\mathbf{u}(i),\mathbf{g})\right)  ^{-1}=%
{\displaystyle\prod\limits_{i=1}^{3}}
\left(
{\displaystyle\prod\limits_{j=1}^{m}}
[a(i)_{j},g_{j}]^{\tau_{j}(\mathbf{g},\mathbf{u}(i))}\right)  ^{w(i)}%
\]
where $w(i)=\mathbf{c(u}(i-1),\mathbf{g})^{-1}\ldots\mathbf{c}(\mathbf{u}%
(1),\mathbf{g})^{-1}$.
\end{lemma}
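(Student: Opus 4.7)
The plan is to prove this as a purely formal computation in the free group on the letters $g_{j}$, $u(i)_{j}$, $a(i)_{j}$, using only the commutator identity $[xy,z]=[x,z]^{y}[y,z]$ (with the convention $x^{y}=y^{-1}xy$) together with $[g,v]=[v,g]^{-1}$. First I would work one index $i$ at a time: applying $[xy,z]=[x,z]^{y}[y,z]$ termwise to each factor of $\mathbf{c}(\mathbf{a}(i)\cdot\mathbf{u}(i),\mathbf{g})$ gives the interleaved expansion
\[
\mathbf{c}(\mathbf{a}(i)\cdot\mathbf{u}(i),\mathbf{g})=\prod_{j=1}^{m}[a(i)_{j},g_{j}]^{u(i)_{j}}[u(i)_{j},g_{j}].
\]

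Next I would establish the single-index identity
\[
\mathbf{c}(\mathbf{a}(i)\cdot\mathbf{u}(i),\mathbf{g})=T(i)\cdot\mathbf{c}(\mathbf{u}(i),\mathbf{g}),\qquad T(i):=\prod_{j=1}^{m}[a(i)_{j},g_{j}]^{\tau_{j}(\mathbf{g},\mathbf{u}(i))}.
\]
The key observation is that, writing $P_{j}(i)=\prod_{k=1}^{j}[u(i)_{k},g_{k}]$, the relation $[g,v]=[v,g]^{-1}$ converts $\tau_{j}(\mathbf{g},\mathbf{u}(i))$ into $u(i)_{j}\,P_{j-1}(i)^{-1}$, so $[a(i)_{j},g_{j}]^{\tau_{j}(\mathbf{g},\mathbf{u}(i))}=P_{j-1}(i)\cdot[a(i)_{j},g_{j}]^{u(i)_{j}}\cdot P_{j-1}(i)^{-1}$. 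When the $m$ conjugated terms are multiplied, prefixes and suffixes telescope: between the $j$th and $(j{+}1)$st factor one is left with $P_{j-1}(i)^{-1}P_{j}(i)=[u(i)_{j},g_{j}]$, which interleaves correctly with the $[a(i)_{j},g_{j}]^{u(i)_{j}}$ terms, and the leftover $P_{m-1}(i)^{-1}$ at the far right is absorbed when one multiplies by $\mathbf{c}(\mathbf{u}(i),\mathbf{g})=P_{m}(i)$.

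Finally, I would assemble the three $i$'s. Writing $C(i)=\mathbf{c}(\mathbf{u}(i),\mathbf{g})$, the left-hand side of the lemma equals
\[
T(1)C(1)T(2)C(2)T(3)C(3)\cdot C(3)^{-1}C(2)^{-1}C(1)^{-1}.
\]
Cancelling $C(3)C(3)^{-1}$ and inserting cancelling pairs $C(1)C(1)^{-1}$ and $C(1)C(2)C(2)^{-1}C(1)^{-1}$ to form conjugates, one rewrites this as $T(1)\cdot T(2)^{C(1)^{-1}}\cdot T(3)^{C(2)^{-1}C(1)^{-1}}=\prod_{i=1}^{3}T(i)^{w(i)}$, which is exactly the right-hand side. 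The whole argument is bookkeeping; the only real risk is mixing up the reversed order inside $\tau_{j}$ with the conjugation convention, but once the identity $\tau_{j}=u_{j}P_{j-1}^{-1}$ is on the board the telescoping runs cleanly.
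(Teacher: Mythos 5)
Your calculation is correct, and it is exactly the "direct calculation" the paper alludes to without writing out: the single-index identity $\mathbf{c}(\mathbf{a}(i)\cdot\mathbf{u}(i),\mathbf{g})=T(i)\,\mathbf{c}(\mathbf{u}(i),\mathbf{g})$ via the commutator identity and the rewriting $\tau_j(\mathbf{g},\mathbf{u}(i))=u(i)_j P_{j-1}(i)^{-1}$, followed by the telescoping insertion of $C(1),C(1)C(2)$ to form the conjugators $w(2),w(3)$. Both the bookkeeping on $\tau_j$ and the conjugation conventions are handled correctly, so nothing is missing.
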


This is a direct calculation. The next lemma is easily verified by induction
on $m$ (see \cite{NS1}, Lemma 4.5):

\begin{lemma}
\label{newgens}%
\[
\left\langle g_{j}^{\tau_{j}(\mathbf{g},\mathbf{u})}\mid j=1,\ldots
,m\right\rangle =\left\langle g_{j}^{u_{j}h_{j}}\mid j=1,\ldots
,m\right\rangle
\]
where $h_{j}=g_{j-1}^{-1}\ldots g_{1}^{-1}$.
\end{lemma}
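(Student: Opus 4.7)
The plan is to reduce the lemma to a clean explicit factorisation of $\tau_j(\mathbf{g},\mathbf{u})$, namely
\[
\tau_j(\mathbf{g},\mathbf{u}) \;=\; u_j h_j \cdot g_{j-1}^{u_{j-1}h_{j-1}}\, g_{j-2}^{u_{j-2}h_{j-2}} \cdots g_1^{u_1 h_1},
\]
valid for every $j\ge 1$ (with empty trailing product when $j=1$). Writing $p_j := g_{j-1}^{u_{j-1}h_{j-1}} \cdots g_1^{u_1 h_1}$, this formula immediately gives
\[
g_j^{\tau_j(\mathbf{g},\mathbf{u})} \;=\; \bigl(g_j^{u_j h_j}\bigr)^{p_j}.
\]

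Given this, I conclude the lemma by a secondary induction on $m$. Writing $T_m$ and $U_m$ for the two subgroups in the statement, once the inductive hypothesis $T_m=U_m$ is in place the element $p_{m+1}$ lies in $U_m=T_m$ by construction. Hence $g_{m+1}^{\tau_{m+1}}$ and $g_{m+1}^{u_{m+1}h_{m+1}}$ are mutual conjugates by an element of this common subgroup, so adjoining either of them to $T_m=U_m$ produces the same enlargement, whence $T_{m+1}=U_{m+1}$.

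It remains to prove the factorisation, which I would do by induction on $j$. The base case $j=1$ is immediate since $h_1=1$ and $\tau_1=u_1$. For the inductive step, the definition yields the recursion $\tau_{j+1}=u_{j+1}[g_j,u_j]u_j^{-1}\cdot\tau_j$; substituting $\tau_j = u_j h_j\cdot p_j$ and cancelling $u_j^{-1}\cdot u_j$ reduces the required identity to the single commutator manipulation
\[
[g_j,u_j]\,h_j \;=\; h_{j+1}\cdot g_j^{u_j h_j},
\]
which is verified in one line using $h_{j+1}=g_j^{-1}h_j$. The only obstacle here is notational: all three expressions are long non-commutative words, so one must track the order of conjugations carefully, but there is no conceptual difficulty beyond a direct computation.
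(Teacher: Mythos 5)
Your proof is correct; the factorisation $\tau_j(\mathbf g,\mathbf u)=u_jh_j\,g_{j-1}^{u_{j-1}h_{j-1}}\cdots g_1^{u_1h_1}$ and the one-line identity $[g_j,u_j]h_j=h_{j+1}\,g_j^{u_jh_j}$ both check out with the conventions $[a,b]=a^{-1}b^{-1}ab$ and $g^h=h^{-1}gh$, and the conjugating element $p_{m+1}$ indeed lies in the common subgroup by the inductive hypothesis. This is essentially the argument the paper intends when it says the lemma is ``easily verified by induction on $m$'' and refers to Lemma 4.5 of \cite{NS1}.
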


Now we are given $\mathbf{u}(i)\in H^{(m)}$ and $\kappa\in K$ such that%
\[
h=\kappa%
{\displaystyle\prod\limits_{i=1}^{3}}
\mathbf{c}(\mathbf{u}(i),\mathbf{g})
\]
and%
\begin{align}
G  &  =CK\left\langle g_{j}^{\tau_{j}(\mathbf{g},\mathbf{u}(i))}\mid
j=1,\ldots,m\right\rangle \nonumber\\
&  =CK\left\langle g_{j}^{u(i)_{j}h_{j}}\mid j=1,\ldots,m\right\rangle
\qquad\text{for }i=1,2,3 \label{u-genmodK}%
\end{align}
the second equality thanks to Lemma \ref{newgens}.

Let $\mathbf{v}=\mathbf{a}\cdot\mathbf{u}$ with $\mathbf{a}(i)\in N^{(m)}$ and the goal is to find a suitable $\mathbf{a}$.
Lemma \ref{u-v} shows that (\ref{hprod}) is then equivalent to%
\begin{equation}%
{\displaystyle\prod\limits_{i=1}^{3}}
\left(
{\displaystyle\prod\limits_{j=1}^{m}}
[a(i)_{j},g_{j}]^{\tau_{j}(\mathbf{g},\mathbf{u}(i))}\right)  ^{w(i)}%
=\kappa\text{.} \label{A-prod}%
\end{equation}
This can be further simplified by setting%
\begin{align}
y(i)_{j}  &  =g_{j}^{\tau_{j}(\mathbf{g},\mathbf{u}(i))w(i)},~~t(i)_{j}%
=g_{j}^{u(i)_{j}h_{j}}\nonumber\\
b(i)_{j}  &  =a(i)_{j}^{\tau_{j}(\mathbf{g},\mathbf{u}(i))w(i)},~~c(i)_{j}%
=a(i)_{j}^{u(i)_{j}h_{j}}. \label{a-b-c}%
\end{align}

Define $\phi(i):N^{(m)}\rightarrow N$ by%
\begin{equation}
\mathbf{b}\phi(i)=\mathbf{c}(\mathbf{b},\mathbf{y}(i)), \quad \mathbf{b} \in N^{(m)}. \label{phi}%
\end{equation}
Then (\ref{A-prod}) becomes%
\begin{equation}%
{\displaystyle\prod\limits_{i=1}^{3}}
\mathbf{b}(i)\phi(i)=\kappa, \label{kappa}%
\end{equation}
and (\ref{u-genmodK}) is equivalent to%
\begin{align}
G  &  =CK\left\langle y(i)_{1},\ldots,y(i)_{m}\right\rangle \nonumber\\
&  =CK\left\langle t(i)_{1},\ldots,t(i)_{m}\right\rangle \qquad\text{for
}i=1,2,3. \label{y-t-gen}%
\end{align}

Similarly, using Lemma \ref{u-v} again (\ref{G-gen}) holds if and only if for $i=1,~2,~3$ we have%
\begin{equation}
G=CZ\left\langle t(i)_{j}^{c(i)_{j}}\mid j=1,\ldots,m\right\rangle
\label{t-c-gen}%
\end{equation}
(where $Z$ is added harmlessly since $Z\leq\Phi(G)$).

Let $\mathcal{X}(i)$ denote the set of all $\mathbf{c}(i)\in N^{(m)}$ such
that (\ref{t-c-gen}) holds, and write $W(i)$ for the image of $\mathcal{X}(i)
$ under the bijection $N^{(m)}\rightarrow N^{(m)}$ defined in (\ref{a-b-c})
sending $\mathbf{c}(i)\longmapsto\mathbf{b}(i)$.

To sum up: to establish the existence of $\mathbf{a}(1),\mathbf{a}%
(2),\mathbf{a}(3)\in N^{(m)}$ such that the $\mathbf{v}(i)=\mathbf{a}%
(i)\cdot\mathbf{u}(i)$ satisfy (\ref{hprod}) and (\ref{G-gen}), it suffices to
find $(\mathbf{b}(1),\mathbf{b}(2),\mathbf{b}(3))\in W(1)\times W(2)\times
W(3)$ such that (\ref{kappa}) holds.

$\medskip$

We set $\varepsilon=\min\{\frac{1}{7},\frac{1}{r}\}$, and will write
$^{-}:G\rightarrow G/Z$ for the quotient map.

$\medskip$

\emph{The case }$q=1$

$\medskip$

In this case we have $K=N$ and we are assuming that $K_{2}=N^{\prime}[Z,G]=1$.
We use additive notation for $N$ and consider it as a $G$-module. Note that
$[CK,N]=1$. Then (\ref{y-t-gen}) together with $N=[N,G]$ imply that%
\[
\phi(1):\mathbf{b\longmapsto}%
{\textstyle\sum\nolimits_{j=1}^{m}}
b_{j}(y(1)_{j}-1)
\]
is a surjective ($\mathbb{Z}$-module) homomorphism $N^{(m)}\rightarrow N$. It
follows that%
\[
\left\vert \phi(1)^{-1}(c)\right\vert =\left\vert \ker\phi(1)\right\vert
=\left\vert N\right\vert ^{m-1}%
\]
for each $c\in N$.

Now fix $i\in\{1,2,3\}$. According to Theorem 2.1 of \cite{NS2}, at least one
of the elements $g_{j}$ has the $\varepsilon/2$-fixed-point space property on
$\overline{N}$ (see \cite{NS2}, section 2.1); therefore at least $k$ of the
elements $\overline{t(i)_{j}}$ have this property. Now we apply \cite{NS2},
Proposition 2.8(i) to the group $G/CZ$: if $N\nleq CZ$ this shows that
(\ref{t-c-gen}) holds for at least $\left\vert \overline{N}\right\vert
^{m}(1-\left\vert \overline{N}\right\vert ^{r-k\varepsilon/2})$ values of
$\overline{\mathbf{c}(i)}$ in $\left\vert \overline{N}\right\vert ^{m}$. If
$N\leq CZ$ the same holds trivially for \emph{all} $\overline{\mathbf{c}(i)}$
in $\left\vert \overline{N}\right\vert ^{m}$. It follows in any case that
\begin{equation}
\left\vert W(i)\right\vert =\left\vert \mathcal{X}(i)\right\vert
\geq\left\vert Z\right\vert ^{m}\cdot\left\vert \overline{N}\right\vert
^{m}(1-\left\vert \overline{N}\right\vert ^{r-k\varepsilon/2})=\left\vert
N\right\vert ^{m}(1-\left\vert \overline{N}\right\vert ^{r-k\varepsilon/2}).
\label{W(i)-sol}%
\end{equation}

We need to compare $\left\vert \overline{N}\right\vert $ with $\left\vert
N\right\vert $. Observe that $\mathbf{b\longmapsto}%
{\textstyle\sum\nolimits_{j=1}^{r}}
b_{j}(g_{j}-1)$ induces an epimorphism from $\overline{N}^{(r)}$ onto $N$;
consequently $\left\vert N\right\vert \leq\left\vert \overline{N}\right\vert
^{r}$. Thus since $k\varepsilon/2r>1$ we have%
\[
\left\vert W(i)\right\vert \geq\left\vert N\right\vert ^{m}(1-\left\vert
N\right\vert ^{1-k\varepsilon/2r})>0,
\]
so $W(i)$ is non-empty for each $i$. For $i=2,~3$ choose $\mathbf{b}(i)\in
W(i)$ and put%
\[
c=\kappa\left(
{\displaystyle\prod\limits_{i=2}^{3}}
\mathbf{b}(i)\phi(i)\right)  ^{-1}.
\]
Then%
\[
\left\vert \phi(1)^{-1}(c)\right\vert +\left\vert W(1)\right\vert
\geq\left\vert N\right\vert ^{m}(\left\vert N\right\vert ^{-1}+1-\left\vert
N\right\vert ^{1-k\varepsilon/2r})>\left\vert N\right\vert ^{m}%
\]
since $k\varepsilon/2r>2$. It follows that $\phi(1)^{-1}(c)\cap W(1)$ is
non-empty. Thus we may choose $\mathbf{b}(1)\in\phi(1)^{-1}(c)\cap W(1)$ and
ensure that (\ref{kappa}) is satisfied.

$\medskip$

\emph{The case }$q=2$

$\medskip$

Now we take $K=N^{\prime}$, assuming that $K_{3}=[Z,G]=1$. Since $N^{\prime
}\leq Z$, the argument above again gives (\ref{W(i)-sol}).

The maps $\phi(i)$ are no longer homomorphisms, however. Below we establish

\begin{proposition}
\label{NS6.2}Let $N$ be a soluble quasi-minimal normal subgroup and $C$ a
normal subgroup of the finite group $G$, with $[C,N]=1$. Assume that
$G=C\left\langle y(i)_{1},\ldots,y(i)_{m}\right\rangle $ for $i=1,2,3$. Then
for each $c\in N^{\prime}$ there exist $c_{1},c_{2},c_{3}\in N$ such that
$c=c_{1}c_{2}c_{3}$ and%
\begin{equation}
\left\vert \phi(i)^{-1}(c_{i})\right\vert \geq\left\vert N\right\vert
^{m}\cdot\left\vert \overline{N}\right\vert ^{-r-2}\qquad(i=1,2,3),
\label{fibres-sol}%
\end{equation}
where $\phi(i)$ is given by (\ref{phi}) and $r=\mathrm{d}(G/C)$.
\end{proposition}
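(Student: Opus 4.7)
The plan is to reduce the fibre count for $\phi(i)$ to a problem on the chief factor $\overline{N}=N/Z$, and then exploit the class-$2$ nilpotent structure of $N$ afforded by the hypotheses of the case at hand.

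Since $K_3=[Z,G]=1$ we have $Z\le\mathrm{Z}(G)$, and combined with $N'\le Z$ this makes $N$ nilpotent of class at most $2$. The identity $[bz,g]=[b,g]$ for $z\in Z$, $g\in G$ shows that $\phi(i)$ is constant on cosets of $Z^{(m)}$ in $N^{(m)}$, and so factors through a well-defined map $\tilde\phi(i)\colon\overline{N}^{(m)}\to N$ with $|\phi(i)^{-1}(c)|=|Z|^m\cdot|\tilde\phi(i)^{-1}(c)|$. The conclusion reduces to exhibiting $c_1,c_2,c_3\in N$ with $c_1c_2c_3=c$ and $|\tilde\phi(i)^{-1}(c_i)|\geq|\overline{N}|^{m-r-2}$.

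Modulo $Z$, $\tilde\phi(i)$ descends to the $\mathbb{Z}$-linear map
\[
\bar\phi(i)\colon\overline{N}^{(m)}\to\overline{N},\qquad \overline{\mathbf{b}}\mapsto\sum_{j=1}^m\overline{b}_j(y(i)_j-1),
\]
which is surjective by $G=C\langle y(i)_1,\ldots,y(i)_m\rangle$, $[C,N]=1$ and $\overline{N}=[\overline{N},G]$. Hence $\bar M(i):=\ker\bar\phi(i)$ has order $|\overline{N}|^{m-1}$. A short commutator manipulation using $[xy,g]=[x,g]^y[y,g]$, $[x,g]^y=[x,g]\cdot[[x,g],y]$, together with the centrality of $[[x,g],y]\in N'\le\mathrm{Z}(G)$, yields a crossed-homomorphism relation
\[
\tilde\phi(i)(\overline{\mathbf{b}}+\overline{\mathbf{d}})=\tilde\phi(i)(\overline{\mathbf{b}})\cdot\tilde\phi(i)(\overline{\mathbf{d}})\cdot\delta(\overline{\mathbf{b}},\overline{\mathbf{d}}),
\]
with $\delta$ a $\mathbb{Z}$-bilinear form valued in $Z$. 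It follows that all non-empty fibres of $\tilde\phi(i)$ over elements of a fixed $Z$-coset of $N$ are equinumerous, of common size $|\overline{N}|^{m-1}/|Z(i)|$, where $Z(i):=\tilde\phi(i)(\bar M(i))\subseteq Z$.

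The crux, and the main obstacle, is the bound $|Z(i)|\leq|\overline{N}|^{r+1}$. Here is where $r=\mathrm{d}(G/C)$ enters: modulo $C\le\mathrm{C}_G(N)$ only $r$ essentially distinct actions on $\overline{N}$ occur among the $y(i)_j$, so $Z(i)$ lies in a subgroup of $Z$ generated by at most $r+1$ contributions coming from the bilinear commutator pairing $\overline{N}\otimes\overline{N}\to N'\le Z$, each of size at most $|\overline{N}|$. This requires careful bookkeeping of the twist $\delta$ and of the submodule structure of $Z$ under $G/C$. Granted the bound, every $c_i$ in the image of $\tilde\phi(i)$ has fibre of size at least $|\overline{N}|^{m-r-2}$; and since each image meets every $Z$-coset of $N$ in a translate of $Z(i)$, a straightforward covering argument shows that any $c\in N'$ can be written $c=c_1c_2c_3$ with $c_i$ in the image of $\tilde\phi(i)$, completing the proof.
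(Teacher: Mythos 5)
Your approach is genuinely different from the paper's, and unfortunately it has unresolved gaps at exactly the points where the work would have to be done.

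The paper's proof is a \emph{reduction}, not a computation: it sets $\Gamma=N\langle g_1,\ldots,g_r\rangle$, observes that $\Gamma=\langle g_0,g_1,\ldots,g_r\rangle$ is $(r+1)$-generated (using $[C,N]=1$ to get $N=\langle g_0^{\langle g_1,\ldots,g_r\rangle}\rangle$), writes $y(i)_j=c_{ij}x_{ij}$ with $x_{ij}\in\Gamma$, extends $(x_{ij})_j$ to a generating $m'$-tuple of $\Gamma$ with $m'=m+r+1$, and then simply \emph{cites} Proposition 7.1 of [NS1] applied to $\Gamma$ and $N$. Because $[C,N]=1$ makes $\phi(i)$ and the corresponding $\psi(i)$ agree on the first $m$ coordinates, the fibre count for $\phi(i)$ drops out of the cited result by a factor $|N|^{r+1}$. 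All the analytic content lives in the cited proposition; the paper's argument is purely a bookkeeping reduction. You instead try to reprove the fibre bound directly from the class-$2$ structure, which is a legitimate idea, but then you have to actually supply the content of [NS1] Prop.~7.1 yourself.

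Two concrete gaps. First, you state that ``the crux'' is $|Z(i)|\le|\overline{N}|^{r+1}$ and then say ``Granted the bound,\ldots''. This is not a proof; the assertion that only $r$ ``essentially distinct actions'' occur among the $y(i)_j$ modulo $C$ does not by itself control the image of $\ker\bar\phi(i)$ under the bilinear twist $\delta$, because the kernel has order $|\overline{N}|^{m-1}$ and the twist depends on the $y(i)_j$ pairwise, not merely on their cosets mod $C$. Without a careful argument this bound could fail. Second, the closing ``straightforward covering argument'' is also not established: writing $c=c_1c_2c_3$ with $\bar c_1\bar c_2\bar c_3=\bar 1$ forces $\bar c_3$ once $\bar c_1,\bar c_2$ are chosen, and then you need $c\cdot\bigl(c_1(0)c_2(0)c_3(0)\bigr)^{-1}\in Z(1)Z(2)Z(3)$ for some lifts $c_i(0)$; nothing you have said guarantees this, and indeed surjectivity of $c\mapsto c_1c_2c_3$ onto $N'$ is a nontrivial point (it is what the factor of three in the proposition is for, and [NS1] handles it). Finally, note that you silently assume $[Z,G]=1$ (so that $\phi(i)$ factors through $\overline{N}^{(m)}$); that hypothesis is available at the point of application in the paper (case $q=2$), but it is not part of the statement of the Proposition, so as written you are proving a narrower claim.
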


Since now $K\leq Z$, the hypotheses of Proposition \ref{NS6.2} follow from (\ref{y-t-gen}). Put
$c=\kappa$ and choose $c_{1},c_{2},c_{3}$ as in the Proposition. As
$k\varepsilon>4r+4$, we see that (\ref{W(i)-sol}) and (\ref{fibres-sol})
together imply that $\phi(i)^{-1}(c_{i})\cap W(i)$ is non-empty for $i=1,2,3$.
Thus we can find $\mathbf{b}(i)\in\phi(i)^{-1}(c_{i})\cap W(i)$ for $i=1,2,3$
to obtain (\ref{kappa}).

$\medskip$

\emph{The case }$q=3$

$\medskip$

Now we take $K=[Z,G]$. Since $G=C\left\langle g_{1},\ldots,g_{r}\right\rangle
$, we have $K=%
{\textstyle\prod\nolimits_{j=1}^{r}}
[Z,g_{j}]$. Thus $\kappa=%
{\textstyle\prod\nolimits_{j=1}^{r}}
[z_{j},g_{j}]$ with $z_{1},\ldots,z_{r}\in Z$. In this case, (\ref{kappa}) is
satisfied if we set%
\begin{align*}
b(1)_{j}  &  =z_{j}\qquad(1\leq j\leq r)\\
b(1)_{j}  &  =1\qquad(r<j\leq m)\\
b(i)_{j}  &  =1\qquad(i=2,~3,~1\leq j\leq m),
\end{align*}
because $y(i)_{j}$ is conjugate to $g_{j}$ under the action of $H$ and
$[Z,H]=1$.

For each $i$ we have $W(i)\supseteq Z^{(m)}$, since in this case
(\ref{y-t-gen}) implies (\ref{t-c-gen}) if $c(i)_{j}\in Z$ for all $j$. So
$\mathbf{b}(i)\in W(i)$ for each $i$, as required.

\bigskip

This concludes the proof of Proposition \ref{KT}, modulo the

\bigskip

\textbf{Proof of Proposition \ref{NS6.2}.}$\medskip$

Now $N$ is a quasi-minimal normal subgroup of $G=C\left\langle g_{1}%
,\ldots,g_{r}\right\rangle $. Recall the definition of $Z_N$ as a normal subgroup of $G$ maximal subject to $Z <N$; we saw that $Z_N$ is in fact uniquely determined. Put $\Gamma=N\left\langle g_{1},\ldots
,g_{r}\right\rangle $. Let $g_{0}\in N\smallsetminus
Z_{N}$. Since $[C,N]=1$ we have $N=\langle g_0^G \rangle = \langle g_0^{\langle g_1, \ldots, g_r\rangle } \rangle$ and so $\Gamma=\left\langle
g_{1},\ldots,g_{r},g_{0}\right\rangle $; thus $d(\Gamma)\leq r+1$. For each $i$ and $j$ we have $y(i)_{j}%
=c_{ij}x_{ij}$ with $c_{ij}\in C$ and $x_{ij}\in\Gamma$. Then for $i=1,2,3$ we
have
\begin{align*}
\Gamma &  =(\Gamma\cap C)\left\langle x_{i1},\ldots,x_{im}\right\rangle \\
&  =\left\langle x_{i1},\ldots,x_{im},x_{i,m+1},\ldots,x_{im^{\prime}%
}\right\rangle
\end{align*}
where $m^{\prime}=m+r+1$ and $x_{ij}\in C$ for $m<j\leq m^{\prime}$.

Define $\psi(i):N^{(m^{\prime})}\rightarrow N$ by%
\[
\mathbf{b}\psi(i)=%
{\displaystyle\prod\limits_{j=1}^{m^{\prime}}}
[b_{j},x_{ij}].
\]
We apply Proposition 7.1 of \cite{NS1} to the group $\Gamma$ and its soluble
quasi-minimal normal subgroup $N$. This shows that for each $c\in N^{\prime}$
there exist $c_{1},c_{2},c_{3}\in N$ such that $c=c_{1}c_{2}c_{3}$ and%
\[
\left\vert \psi(i)^{-1}(c_{i})\right\vert \geq\left\vert N\right\vert
^{m^{\prime}}\cdot\left\vert \overline{N}\right\vert ^{-r-2}\qquad(i=1,2,3).
\]

Now%
\[
(b_{1},\ldots,b_{m^{\prime}})\psi(i)=(b_{1},\ldots,b_{m})\phi(i)
\]
for each $\mathbf{b}\in N^{(m^{\prime})}$; so%
\[
\psi(i)^{-1}(c_{i})=\phi(i)^{-1}(c_{i})\times N^{(r+1)}%
\]
and it follows that%
\[
\left\vert \phi(i)^{-1}(c_{i})\right\vert =\left\vert \psi(i)^{-1}%
(c_{i})\right\vert \cdot\left\vert N\right\vert ^{-(r+1)}\geq\left\vert
N\right\vert ^{m}\cdot\left\vert \overline{N}\right\vert ^{-r-2}%
\]
as required.

This completes the proof.

\bigskip

\textbf{Proof of Theorem \ref{newcomm}. }Now $G$ is a finite group, $H$ is a
normal subgroup of $G$ and $\{y_{1},\ldots,y_{r}\}$ is a symmetric subset of
$G$. We are given that $G=H\left\langle y_{1},\ldots,y_{r}\right\rangle
=A\left\langle y_{1},\ldots,y_{r}\right\rangle $ where $A$ is an abelian
normal subgroup of $G$ with $[A,H]=1$. The claim is that%
\[
\lbrack H,G]=\left(
{\displaystyle\prod\limits_{i=1}^{r}}
[H,y_{i}]\right)  ^{\ast f_{1}}%
\]
where $f_{1}=f_{1}(r)$.

Put $\Gamma=\left\langle y_{1},\ldots,y_{r}\right\rangle $, so $G=H\Gamma
=A\Gamma$. Choose $n$ so that $[H,_{n}G]=K$ satisfies $K=[K,G]$. By \cite{W},
Proposition 1.2.5 we have%
\[
\lbrack H,G]=%
{\displaystyle\prod\limits_{i=1}^{r}}
[H,y_{i}]\cdot K.
\]
Put $G_{1}=K\Gamma$ and $A_{1}=A\cap G_{1}$. Then $G_{1}=A_{1}\Gamma$ and
$K=[K,\Gamma]=[K,G_{1}]$. So replacing $H$ by $K,$ $G$ by $G_{1}$ and $A$ by
$A_{1}$ we reduce to the case where $H=[H,G]$. This implies in particular that%
\[
G=H\Gamma=G^{\prime}\Gamma=G^{\prime}\left\langle y_{1},\ldots,y_{r}%
\right\rangle .
\]

Now $AH\cap\Gamma$ is centralized by $A$ and normalized by $\Gamma$, so
$AH\cap\Gamma\vartriangleleft G$. But $AH=A(AH\cap\Gamma)$ so%
\[
H^{\prime}=(AH)^{\prime}=(AH\cap\Gamma)^{\prime}\leq\Gamma\text{.}%
\]
Theorem 1.2 of \cite{NS2} gives%
\[
\lbrack H^{\prime},G]=[H^{\prime},\Gamma]=\left(  \prod_{i=1}^{r}[H^{\prime
},y_{i}]\right)  ^{\ast f_{0}(r)}%
\]
(where $f_{0}(r)$ depends only on $r$). Replacing $G$ by $G/[H^{\prime},G]$,
we reduce to the case where $H^{\prime}\leq\mathrm{Z}(G)$. Thus $H=[H,G]$ is
nilpotent. It follows by Proposition \ref{KT} that%
\[
H=\left(  \prod_{i=1}^{r}[H,y_{i}]\right)  ^{\ast3k(r)}.
\]
Putting everything together we can take%
\[
f_{1}=1+f_{0}(r)+3k(r).
\]

(The alert reader may wonder how we could establish the hard result Theorem
\ref{newcomm} using only a version of the easier, `soluble' case of the `Key
Theorem' from \cite{NS2}; the answer is that the full strength of the latter
is implicitly invoked at the point where we quote \cite{NS2}, Theorem 1.2.)

\section{Dense normal subgroups\label{dnssec}}

\subsection{The main result}

\noindent\textbf{Definition.} \textbf{(a) }Let $S$ be a finite simple group.
Then $Q(S)$ denotes the following subgroup of $\mathrm{Aut}(S)$:%
\begin{align*}
&  \mathrm{PGO}_{2n}^{+}(q)\text{ if }S=D_{n}(q),~n\geq5\\
&  \mathrm{PGO}_{2n}^{-}(q)\text{ if }S=\,^{2}\!D_{n}(q)\\
&  \mathrm{Inn}(S)\text{ if }S=C_{n}(q)\\
&  \mathrm{InnDiag}(S)\text{ if }S\text{ is of another Lie type}\\
&  \mathrm{Aut}(S)\text{ in all other cases}%
\end{align*}

Note that when $S=D_{n}(q)$ then $Q(S)=\mathrm{PGO}_{2n}^{+}(q)=\mathrm{PSO}%
_{2n}^{+}(q)\langle\tau\rangle$ and when $S=\,^{2}\!D_{n}(q)$ then
$Q(S)=\mathrm{PGO}_{2n}^{+}(q)=\mathrm{PSO}_{2n}^{+}(q)\langle\lbrack q]\rangle$,
where $\tau$ is the non-trivial graph automorphism of $D_{n}(q)$ and $[q]$
denotes the field automorphism of order $2$ of $^{2}\!D_{n}(q)$.

\textbf{(b) }Let $S$ be a connected compact simple Lie group. Then%
\[
Q(S)=\left\{
\begin{array}
[c]{ccc}%
\mathrm{Aut}(S) & \text{if} & S=\mathrm{PSO}(2n),~n\geq3\\
&  & \\
\mathrm{Inn}(S) &  & \text{else}%
\end{array}
\right.  .
\]

\textbf{(c) }A compact topological group $H$ is \emph{Q-almost-simple} if
$S\vartriangleleft H\leq Q(S)$ where $S$ is a finite simple group or a compact
connected simple Lie group with trivial centre (and $S$ is identified with
$\mathrm{Inn}(S)$ ). Note that if $H$ is not finite, then $H$ is
Q-almost-simple if and only if it is either simple or else isomorphic to
$\mathrm{Aut}(\mathrm{PSO}(2n))$ for some $n\geq3$, because $\left\vert
\mathrm{Aut}(S)/\mathrm{Inn}(S)\right\vert =2$ for $S=\mathrm{PSO}%
(2n)$.\bigskip

\noindent If $H$ is Q-almost-simple as above, the \emph{rank} of $H$ is
defined to be the (untwisted) Lie rank of $S$ if $S$ is of Lie type, $n$ if
$S\cong\mathrm{Alt}(n)$, and zero otherwise.

\begin{theorem}
\label{dns}Let $G$ be a compact Hausdorff group of f.g. type. Then $G$ has a
proper dense normal subgroup if and only if one of the following holds:

\begin{itemize}
\item $G$ has an infinite abelian quotient, \emph{or}

\item $G$ has a strictly infinite semisimple quotient, \emph{or}

\item $G$ has \emph{Q}-almost-simple quotients of unbounded ranks.
\end{itemize}
\end{theorem}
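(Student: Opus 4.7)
I would prove the two directions separately.

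\emph{Sufficiency.} I would construct a proper dense normal subgroup under each hypothesis. If $G$ has an infinite abelian quotient $\pi\colon G\to A$, then any infinite compact Hausdorff abelian group has a proper dense subgroup $D_0$ (for instance, $\mathbb{Q}/\mathbb{Z}$ is dense and proper in any quotient torus $\mathbb{T}^n$, and analogous examples work for arbitrary such $A$), and $\pi^{-1}(D_0)$ is proper, dense, and normal in $G$. If $G$ has a strictly infinite semisimple quotient $\pi\colon G\to \prod_{i\in I} S_i$, the restricted direct product $\bigoplus_{i\in I} S_i$ is a proper, dense, characteristic subgroup, and its preimage under $\pi$ works. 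For the third case, given $Q$-almost-simple quotients $G/N_i$ of ranks $r_i\to\infty$, I would exploit the fact that commutator (or verbal) width in a $Q$-almost-simple group grows with its rank: for each fixed $f$, the product set $\mathcal{X}^{\ast f}$ of $f$ commutators fails to exhaust $G/N_i$ once $r_i$ is large enough. The dense proper normal subgroup is then manufactured by a diagonalisation against this family of width obstructions — taking, say, the normal closure of a countable collection of commutators chosen so that the resulting subgroup surjects onto every open normal quotient of $G$ yet still misses an element in at least one $G/N_i$ for every $f$.

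\emph{Necessity.} Suppose $D \triangleleft G$ is proper and dense; I would argue by contrapositive that one of the three quotient types must exist. Assuming none does, the absence of infinite abelian quotients combined with Theorem \ref{thm1} yields that $G/[G,G]$ is finite, and Theorem \ref{[H,G]thm} gives finiteness of the analogous abelianizations $N/[N,G]$ for closed normal $N\triangleleft G$. The absence of a strictly infinite semisimple quotient controls how infinitely many non-abelian simple composition factors of $G$ can be organised at the closed-normal level, and the bounded rank of all $Q$-almost-simple quotients supplies a uniform bound $f$ on commutator widths across all almost-simple quotients of $G$. Mirroring the reductions preceding Theorem \ref{T3}, these inputs should combine to give a fixed $f$ for which $G=\mathcal{X}^{\ast f}$, where $\mathcal{X}=\{[h,g]:h,g\in G\}$. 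Since $\mathcal{X}^{\ast f}$ is closed and $D$ is dense and normal, a standard compactness argument (density gives approximations of every commutator inside $D$, normality keeps us inside $D$) forces $\mathcal{X}^{\ast f}\subseteq D$, hence $D=G$, contradicting properness.

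\emph{Main obstacle.} The principal difficulty is the sufficiency direction of the third case. Unlike the semisimple case, the $Q$-almost-simple quotients are not assembled into a single Cartesian product — their kernels need not intersect to give a subdirect product structure — so no off-the-shelf restricted direct product is available. One must instead fabricate a normal subgroup that is simultaneously proper (by exploiting growing commutator or verbal widths in high-rank almost-simple groups, in the spirit of \cite{NS1}) and dense (by surjecting onto every open normal quotient of $G$). The careful definition of $Q(S)$, which excludes exactly those outer automorphisms under which every element of $S$ would become a commutator, is crucial: its role is precisely to preserve the width obstruction on which the properness of the constructed subgroup depends.
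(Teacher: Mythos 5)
Your overall architecture (split sufficiency/necessity, treat the three alternatives separately, construct explicit dense proper normal subgroups, use a width obstruction to diagonalise) mirrors the paper, and your handling of alternatives (a) and (b) is essentially correct. However, the heart of the matter — the third alternative — is mischaracterised in a way that would derail the proof.

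You say the key mechanism is that ``commutator (or verbal) width in a $Q$-almost-simple group grows with its rank,'' so that $\mathcal{X}^{*f}$ fails to exhaust $G/N_i$ for large $r_i$. That is false: commutator width in finite (almost-)simple groups is bounded independently of rank (indeed Ore's conjecture makes every element of a finite simple group a single commutator). The actual mechanism in the paper is about the \emph{size of the set} $[S,\,g]$ for a single element $g$ of an almost-simple group, not about width. Concretely, Lemma \ref{ranklemma} says that for $f\in Q(S)$ with $S$ of large rank there is $s\in S$ with $\log|[S,sf]|$ a vanishingly small fraction of $\log|S|$ (equivalently, $\mathrm{C}_S(sf)$ is nearly all of $S$). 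One perturbs the topological generators $a_i$ of $G$ to $b_i=(s_{ij})a_i$ so that each $[S_j,b_i]$ is tiny; the normal closure $N$ of the $b_i$ is dense (each $S_j$ is simple and meets $N$ non-trivially), and proper because a product of $n$ conjugates of the $b_i^{\pm1}$ has image of size at most $(2d\,|S_j|^{\eta_j+\varepsilon_j})^n < |S_j|$ in $\mathrm{Aut}_G(S_j)$ once $j$ is large; one then diagonalises over $n$. Your gloss of $Q(S)$ — ``excludes exactly those outer automorphisms under which every element of $S$ would become a commutator'' — is also off: $Q(S)$ is precisely the set of automorphisms that can be made to have nearly trivial commutator set by an inner twist, while $\mathrm{Aut}(S)\smallsetminus Q(S)$ consists of those $f$ with $|[S,f]|\geq|S|^{\varepsilon_*}$ uniformly (Lemma \ref{notinQ(S)}).

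The necessity direction also has a genuine gap. You claim that bounded ranks and absence of abelian/semisimple quotients ``should combine to give a fixed $f$ with $G=\mathcal{X}^{*f}$,'' and that ``density plus normality forces $\mathcal{X}^{*f}\subseteq D$.'' The latter step is not a standard compactness argument and does not hold in general: a dense normal subgroup need not contain any particular commutator. The paper's argument instead reduces to the semisimple normal subgroup $T=\overline{G^{(3)}}$ of $G/G_0$, splits $T=T_0\times T_1\times T_2$ by how $G$ acts on the simple factors, picks $c_i\in a_iT\cap N$ (possible since $NT=G$), and uses Lemmas \ref{notinQ(S)}, \ref{bdedranklemma}, \ref{epsilonlemma} to show $T_1T_2\subseteq N$ via $S=([S,c_i][S,c_i^{-1}])^{*k}\subseteq N$ for each relevant factor $S$. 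This yields $|G:N|<\infty$, and then one invokes the nontrivial result that finite-index normal subgroups of finitely generated profinite groups are open (\cite{NS1}, Theorem 1.1) to get $N=G$. Without that open-subgroup theorem your sketch cannot close the argument. Finally, you do not address the connected (or mixed) case: when $G^0\neq 1$ the paper needs a separate reduction (Section 3.4, steps 1–5) and a Lie-group analogue, Proposition \ref{outer}, proved via the $\lambda$-function and Lemma \ref{brank}; this is an essential and non-trivial component that your plan omits.
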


\noindent(The quotients here refer to $G$ as a \emph{topological} group, i.e.
they are continuous quotients -- in the first case this makes no difference,
in view of Theorem \ref{thm1}.)

\subsection{The profinite case\label{profsec}}

Let $G$ be an infinite finitely generated profinite group. It is clear that in
each of the following cases, $G$ has a \emph{countable}, hence proper, dense
normal subgroup:

\begin{itemize}
\item $G$ is abelian (because $G$ contains a dense (abstractly) finitely
generated subgroup)

\item $G$ is semisimple ($G$ is the Cartesian product of infinitely many
finite simple groups, and the restricted direct product is a dense normal subgroup).
\end{itemize}

Let $G_{2}$ denote the intersection of all maximal open normal subgroups of
$G$ not containing $G^{\prime}$; thus%
\[
G_{\mathrm{ss}}:=G/G_{2}%
\]
is the maximal semisimple quotient of $G$. The preceding observations imply:

\begin{itemize}
\item $G$ has a proper dense normal subgroup if either $G^{\mathrm{ab}%
}:=G/G^{\prime}$ or $G_{\mathrm{ss}}$ is infinite
\end{itemize}

\noindent(recall that $G^{\prime}$ is closed, so $G/G^{\prime}$ is again
profinite, by Theorem \ref{thm1}).

We recall a definition and a result from \cite{NS2}, Section 1:$\medskip$

\textbf{Definition.} $G_{0}$ denotes the intersection of the centralizers of
all simple non-abelian chief factors of $G\medskip$

\noindent(here by `chief factor' of $G$ we mean a chief factor of some $G/K$
where $K$ is an open normal subgroup of $G$.)

\begin{proposition}
\label{G_0}\emph{(\cite{NS2}, Corollary 1.8)} Let $N$ be a normal subgroup of
(the underlying abstract group) $G$. If $NG^{\prime}=NG_{0}=G$ then $N=G$.
\end{proposition}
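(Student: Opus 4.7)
The natural approach is to reduce to the case where $G$ is finite and then induct on the order. For any open normal subgroup $K\vartriangleleft G$, set $\bar{G}=G/K$ and $\bar{N}=NK/K$. Since $(\bar{G})'=G'K/K$ and $G_0K/K\leq(\bar{G})_0$, the hypotheses descend: $\bar{N}(\bar{G})'=\bar{N}(\bar{G})_0=\bar{G}$. If the finite version of the proposition is established, then $NK=G$ for every open normal $K$, so $N$ is dense in $G$; promoting density to equality is then automatic provided the argument also bounds the index $[G:N]$, at which point the main theorem of \cite{NS1} gives $N$ open and forces $N=G$.

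For the finite case, let $G$ be a minimal counterexample and $M$ a minimal normal subgroup. The hypotheses descend to $G/M$, so by minimality of $G$ we have $NM=G$, and it remains to prove $M\leq N$. Suppose otherwise. Then $M\cap N$ is a normal subgroup of $G$ properly contained in $M$, so by minimality of $M$ we have $M\cap N=1$.

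If $M$ is non-abelian, then $M$ is a direct product of isomorphic non-abelian simple groups and is itself a simple non-abelian chief factor of $G$, so $G_0\leq\mathrm{C}_G(M)$ by definition. Since $[N,M]\leq N\cap M=1$, also $N\leq\mathrm{C}_G(M)$. Therefore $G=NG_0\leq\mathrm{C}_G(M)$, forcing $M\leq\mathrm{Z}(G)$, a contradiction to $M$ being non-abelian.

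The main obstacle is the abelian case, where $M$ is a simple $\mathbb{F}_p[G]$-module and $G_0$ need not centralize $M$, so the previous argument breaks down. The plan here is to exploit $NG'=G$ through commutator-width machinery: $G/N$ is a perfect group containing $MN/N\cong M$ as a nontrivial abelian normal subgroup, and the aim is to apply the Key Theorem of \cite{NS2} (in a form akin to Proposition \ref{KT}) to write every element of the chief factor $M\leq G'$ as a bounded product of commutators $[n,g]$ with $n\in N$, $g\in G$; since $[N,G]\leq N$ whereas $M\cap N=1$, this yields $M=1$, the required contradiction. The remaining sub-case $M\leq\mathrm{Z}(G)$ places $M$ inside $G_0$, and one then combines $NG'=G$ with the quasi-minimal normal subgroup analysis (as in the proof of Proposition \ref{KT}) to close out. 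This commutator-width step is where the bulk of the technical work lies.
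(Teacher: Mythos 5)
The paper does not prove this proposition; it is quoted verbatim from \cite{NS2} (Corollary 1.8), so there is no internal proof to compare against. Evaluated on its own terms, your sketch has three problems, two of which are genuine gaps.

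First, the reduction from profinite $G$ to finite $G$ is not complete. Establishing the finite case gives $NK=G$ for every open normal $K$, i.e.\ $N$ is \emph{dense}. You observe that density must then be promoted to equality ``provided the argument also bounds the index $[G:N]$,'' but no such bound is produced, and none is automatic: a finitely generated profinite group can certainly have proper dense normal subgroups (this is what the whole of Section 2 of the paper is about), and this proposition is invoked in Lemma \ref{3-conds} precisely to exclude a \emph{dense} proper $N$. So the reduction as stated is circular in spirit and incomplete in fact; the proposition has real content in the case when $N$ is already dense, and your argument does not touch it.

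Second, in your non-abelian case you assert that a non-abelian minimal normal subgroup $M$ ``is itself a simple non-abelian chief factor of $G$.'' That is false in general: $M$ is a direct product of finitely many copies of a simple group permuted transitively by $G$, and when there is more than one factor $M$ is a non-abelian chief factor that is not simple. The definition of $G_0$ only requires $G_0$ to centralize \emph{simple} chief factors, so the inclusion $G_0\leq\mathrm{C}_G(M)$ does not follow and the argument collapses at exactly that point. Some additional structure (e.g.\ passing to the stabilizer of a simple component, or quoting the relevant lemma from \cite{NS2} showing that $G_0$ in fact centralizes all non-abelian chief factors) is needed.

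Third, the abelian case is vastly overcomplicated, and the heavy machinery you propose to invoke is not only unnecessary but obscures a contradiction you have already derived. With $NM=G$ and $N\cap M=1$ you have $G/N=MN/N\cong M$ (so $MN/N$ is not a ``normal subgroup'' of $G/N$ --- it is all of $G/N$), while $NG'=G$ makes $G/N$ perfect; a perfect abelian group is trivial, so $M=1$, contradicting that $M$ is a minimal normal subgroup. No commutator-width argument is required here, and it is unclear what role a subgroup $H$ with $H=[H,G]$ would even play in the set-up you describe.
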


Now let $\mathcal{X}$ denote the class of all finitely generated profinite
groups $H$ such that $H_{0}=1$ and both $H^{\mathrm{ab}}$ and $H_{\mathrm{ss}%
}$ are finite, and let $\mathcal{X}(\mathrm{dns})$ denote the subclass
consisting of those groups that contain a proper dense normal subgroup.

\begin{lemma}
\label{3-conds}Let $G$ be a finitely generated profinite group. Then $G$
contains a proper dense normal subgroup if and only if at least one of the
following holds:

\begin{description}
\item[(a)] $G^{\mathrm{ab}}$ is infinite

\item[(b)] $G_{\mathrm{ss}}$ is infinite

\item[(c)] $G/G_{0}\in\mathcal{X}(\mathrm{dns})$.
\end{description}
\end{lemma}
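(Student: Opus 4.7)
The plan is to prove both directions, with the substantive content in the \emph{only if} direction. For the \emph{if} direction, cases (a) and (b) are essentially already recorded in the excerpt: when $G^{\mathrm{ab}}$ (resp.\ $G_{\mathrm{ss}}$) is infinite it contains a proper dense normal subgroup, whose preimage under the continuous open quotient $G\to G^{\mathrm{ab}}$ (resp.\ $G\to G_{\mathrm{ss}}$) is proper, normal and dense in $G$. For (c) the same pullback argument applies to the open quotient $\pi:G\to G/G_0$, once one observes that $G_0$ is closed (as an intersection of closed centralisers of finite chief factors) so that $G/G_0$ is profinite and $\pi$ is open.

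For the \emph{only if} direction, suppose $N\vartriangleleft G$ is proper and dense while (a) and (b) both fail, so that $G^{\mathrm{ab}}$ and $G_{\mathrm{ss}}$ are finite; the goal is (c). Then $G'$ and $G_2$ are open, and density of $N$ gives
\[
NG'=G=NG_2.
\]
Let $\bar G=G/G_0$ and $\bar N=NG_0/G_0$. I first verify $\bar G\in\mathcal{X}$. The quotient $\bar G^{\mathrm{ab}}=G/G'G_0$ is a quotient of the finite $G^{\mathrm{ab}}$, and $(\bar G)_0=1$ by the very construction of $G_0$. The key further point is that $G_0\leq G_2$: if $M$ is a maximal open normal subgroup of $G$ with $G'\not\leq M$, then $G/M$ is a non-abelian finite simple group, hence centreless, so $G_0\leq\mathrm{C}_G(G/M)=M$; intersecting over such $M$ yields $G_0\leq G_2$. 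Therefore the maximal open normal subgroups of $\bar G$ not containing $\bar G'=G'G_0/G_0$ are precisely the images $M/G_0$ of such $M$, giving $\bar G_2=G_2/G_0$ and $\bar G_{\mathrm{ss}}\cong G_{\mathrm{ss}}$, which is finite. Hence $\bar G\in\mathcal{X}$.

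It remains to show $\bar N$ is a proper dense normal subgroup of $\bar G$. Normality and density (via openness of $\pi$) are automatic; properness is the single real substantive point and is precisely where Proposition~\ref{G_0} does its work. Indeed, if instead $NG_0=G$, then combined with $NG'=G$ the proposition forces $N=G$, contradicting properness of $N$. Therefore $\bar N$ witnesses $\bar G\in\mathcal{X}(\mathrm{dns})$ and (c) holds. The main obstacle in the whole argument is precisely recognising the right application of Proposition~\ref{G_0}; the surrounding bookkeeping (the inclusion $G_0\leq G_2$ and the identification $\bar G_{\mathrm{ss}}\cong G_{\mathrm{ss}}$) is routine once the decomposition is set up.
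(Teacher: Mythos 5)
Your proof is correct and follows essentially the same route as the paper: the forward direction by pulling back dense normal subgroups from the quotients, and the converse via $NG'=G$ (openness of $G'$) combined with Proposition~\ref{G_0} to rule out $NG_0=G$. The only difference is presentational — you argue directly that $NG_0/G_0$ witnesses $G/G_0\in\mathcal{X}(\mathrm{dns})$ and spell out $G_0\leq G_2$ to verify $G/G_0\in\mathcal{X}$, whereas the paper runs the same argument contrapositively and leaves that verification implicit.
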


\begin{proof}
We have shown above that $G$ contains a proper dense normal subgroup if either
(a) or (b) holds, and the same clearly follows in case (c). Suppose conversely
that none of (a), (b) or (c) holds. Then $G/G_{0}\in\mathcal{X}\smallsetminus
\mathcal{X}(\mathrm{dns})$. Now let $N$ be a dense normal subgroup of $G$.
SInce $G^{\mathrm{ab}}$ is finite, $G^{\prime}$ is open in $G$ and so
$NG^{\prime}=G$. As $G/G_{0}$ has no proper dense normal subgroup, we also
have $NG_{0}=G$. Now Proposition \ref{G_0} shows that $N=G$.
\end{proof}

\bigskip

Thus it remains to identify the groups in $\mathcal{X}(\mathrm{dns})$.

$\medskip$

For any chief factor $S$ of $G$ let%
\[
\mathrm{Aut}_{G}(S)
\]
denote the image of $G$ in $\mathrm{Aut}(S)$, where $G$ acts by conjugation.
Now we can state

\begin{proposition}
\label{X(dns)}Let $G\in\mathcal{X}$. Then $G\in\mathcal{X}(\mathrm{dns})$ if
and only if the simple chief factors $S$ of $G$ such that%
\begin{equation}
\mathrm{Aut}_{G}(S)\leq Q(S) \label{Aut<Q(S)}%
\end{equation}
have unbounded ranks.
\end{proposition}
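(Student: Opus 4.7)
I would prove the two implications of the biconditional separately, using the quantitative commutator-width estimates of \cite{NS1}, \cite{NS2} that underlie the definition of $Q(S)$. Throughout, fix a topological generating tuple $(g_1,\ldots,g_d)$ of $G$, and recall that for $G\in\mathcal{X}$ a normal subgroup $N$ is dense precisely when $NK=G$ for every open normal $K$, equivalently when $N$ surjects onto every simple chief factor of $G$.

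For $(\Rightarrow)$ I would assume there is a uniform bound $r_0$ on the rank of every simple chief factor $S$ with $\mathrm{Aut}_G(S)\leq Q(S)$, and prove that every dense normal subgroup equals $G$. The core input is a width estimate, uniform over all simple chief factors $S$ of $G$: there is $f=f(d,r_0)$ such that
\[
S=\left(\prod_{i=1}^{d}[S,g_i]\right)^{*f}.
\]
When $\mathrm{Aut}_G(S)\not\leq Q(S)$ this is the substance of the main results of \cite{NS2} (with $f$ depending only on $d$); when $\mathrm{Aut}_G(S)\leq Q(S)$ and $\mathrm{rank}(S)\leq r_0$ one invokes analogous commutator-width bounds in almost-simple groups of bounded rank, yielding $f$ depending also on $r_0$. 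Combining these estimates, a compactness argument analogous to the First Reduction of Section~1 (together with Theorem~\ref{thm1}) shows that for any dense normal $N$ one has $[N,G]=G'$, whence $N\supseteq G'$; since $NG'=G$ automatically (as $G^{\mathrm{ab}}$ is finite), this forces $N=G$.

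For $(\Leftarrow)$ I would construct a proper dense normal subgroup directly. The subgroup $Q(S)$ is calibrated precisely so that commutator width inside $Q(S)$ relative to a bounded-size tuple grows with $\mathrm{rank}(S)$. Thus for chief factors $S_n$ in the given sequence one can find elements $\sigma_n$ in specified lifts of $S_n$ which are not products of $n$ commutators $[h,g_i]$. Taking $N$ to be the abstract normal subgroup generated, inductively over the inverse system of open normal subgroups, by a carefully chosen countable family of commutator products that covers every chief factor but avoids the $\sigma_n$, one obtains a normal subgroup surjecting onto every finite quotient (hence dense) yet missing $\sigma_n$ for all large $n$ (hence proper).

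The main obstacle is sufficiency: establishing the commutator-width lower bound inside $Q(S)$ as the rank grows (which relies on the structure of $\mathrm{Aut}(S)/Q(S)$ across the families of simple groups), and coordinating the construction of $N$ across infinitely many chief factors so that it remains normal, dense, and proper simultaneously. The hypothesis $\mathrm{Aut}_G(S)\leq Q(S)$ is precisely the obstruction to having an extra outer automorphism (typically a field automorphism) available to shorten commutator products; conversely, its failure yields rank-independent upper bounds, which drives the necessity argument.
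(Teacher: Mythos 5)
Your overall instinct—quantitative commutator estimates that are rank-independent off $Q(S)$ and rank-dependent inside $Q(S)$—matches the paper's ingredients (Lemmas \ref{notinQ(S)}, \ref{bdedranklemma}, \ref{epsilonlemma}, \ref{ranklemma}), but both of your deductions have genuine gaps. For the direction "bounded ranks $\Rightarrow$ no proper dense normal subgroup": your width estimate $S=\bigl(\prod_{i=1}^{d}[S,g_i]\bigr)^{*f}$ is taken with respect to the \emph{fixed} generators $g_i$, which need not lie in the dense subgroup $N$, so it puts no constraint on $N$ at all. The necessary bridge is to produce elements of $N$ acting suitably on each chief factor: one first shows $NT=G$ for the semisimple subgroup $T=\overline{G^{(3)}}$ carrying the chief factors (this uses Lemma \ref{3-conds} applied to $G/T$), picks $c_i\in a_iT\cap N$, and uses that $Q(S)\supseteq\mathrm{Inn}(S)$ and $T$ acts by inner automorphisms, so "outside $Q(S)$" (resp.\ "nontrivial") passes from $a_i$ to $c_i$; then the lemmas give $S=\bigl([S,c_i][S,c_i^{-1}]\bigr)^{*k}$ uniformly, hence $T_1T_2\subseteq N$, hence $|G:N|\le|T_0|<\infty$, and one finishes with the strong completeness theorem of \cite{NS1} (finite-index normal subgroups are open). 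Your proposed finish, "a compactness argument analogous to the First Reduction shows $[N,G]=G'$, whence $N\supseteq G'$", cannot work: that intersection trick requires the commutator set to be compact, and $\{[h,n]\mid h\in G,\ n\in N\}$ is not closed when $N$ is not; were such an argument valid, it would show that \emph{every} $G\in\mathcal{X}$ (having finite abelianization) lacks proper dense normal subgroups, contradicting the other half of the proposition, so the bounded-rank hypothesis must enter through elements of $N$ as above.

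For the direction "unbounded ranks $\Rightarrow$ a proper dense normal subgroup exists", the key mechanism is missing from your sketch. Elements $\sigma_n$ that are not products of $n$ commutators $[h,g_i]$ with the \emph{given} generators need not exist: a generator may induce an automorphism in $Q(S)$ whose commutator set $[S,g_i]$ is huge (e.g.\ a regular semisimple inner element), and then bounded products do cover $S$. The paper first \emph{perturbs} the generators by inner elements: Lemma \ref{ranklemma} provides, for $f\in Q(S)$ with $\mathrm{rank}(S)$ large, some $s\in S$ with $\log|[S,sf]|<\varepsilon\log|S|$; choosing a sequence $S_j$ of relevant chief factors with ranks tending to infinity, one sets $b_i=(s_{ij})\cdot a_i$ and takes $N=\langle b_1^G,\ldots,b_d^G\rangle$. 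Density is then automatic ($N$ meets each simple $S_j$ nontrivially, hence contains the restricted product, and $a_i\in TN$), while properness is a counting argument—the image in $\mathrm{Aut}_G(S_j)$ of the set of length-$n$ products of conjugates of the $b_i^{\pm1}$ has size at most $(2d|S_j|^{\eta_j+\varepsilon_j})^n<|S_j|$ for large $j$—followed by a diagonal choice of $t\in T$ avoiding all these sets. Your "carefully chosen countable family of commutator products" addresses neither the perturbation, nor the quantitative properness argument, nor density across $G/T$ (a subgroup generated by elements lying in $T$ cannot be dense unless lifts of the generators are included, which is exactly why $N$ is taken as the normal closure of the perturbed generators).
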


Since {$\mathrm{Aut}_{G}(S)$ is a Q-almost-simple image of $G$ for such chief
factor $S$, this will complete the proof of Theorem \ref{dns} in the case of a
profinite group $G$. }

$\medskip$

Proposition \ref{X(dns)} depends on the following four lemmas, which will be
sketched in the next subsection:

\begin{lemma}
\label{notinQ(S)}There exists $\varepsilon_{\ast}>0$ such that%
\[
\log\left\vert [S,f]\right\vert \geq\varepsilon_{\ast}\log\left\vert
S\right\vert
\]
whenever $S$ is a finite simple group and $f\in\mathrm{Aut}(S)\smallsetminus
Q(S)$.
\end{lemma}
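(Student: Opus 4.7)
The plan is to reduce the problem to an upper bound on the centraliser $C_S(f)$. Consider the map $\theta:S\to S$ defined by $\theta(s)=s^{-1}f(s)$; its image lies in $[S,f]$, and $\theta(s)=\theta(t)$ iff $ts^{-1}$ is fixed by $f$, so the fibres of $\theta$ are the right cosets of $C_S(f)$. Hence
\[
|[S,f]|\;\geq\;|\{s^{-1}f(s):s\in S\}|\;=\;[S:C_S(f)],
\]
and it suffices to produce $\varepsilon_\ast>0$ with $|C_S(f)|\leq|S|^{1-\varepsilon_\ast}$ for every finite simple $S$ and every $f\in\mathrm{Aut}(S)\setminus Q(S)$.

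When $S$ is alternating or sporadic the hypothesis is vacuous, because $Q(S)=\mathrm{Aut}(S)$; so assume $S=X(q)$ is of Lie type. Decomposing $f$ modulo $\mathrm{Inn}(S)$ as a product of a diagonal, field and graph component, the assumption $f\notin Q(S)$ forces one of three things to happen: a non-trivial field component of order $e\geq 2$ (in every type); a graph or graph-field component not absorbed into $Q(S)$ (triality on $D_4$, the extra graph-field automorphism of $D_n(q)$ and ${}^2D_n(q)$, and analogous outer pieces in types where $Q(S)=\mathrm{InnDiag}(S)$); or, when $S=C_n(q)$ with $q$ odd, a non-trivial diagonal component.

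In each situation I would realise $f$ as a Steinberg endomorphism of the adjoint simple algebraic group $\mathbf{G}$ of type $X$, and invoke the Lang--Steinberg theorem: $C_S(f)$ is, up to a factor bounded by the isogeny type, the fixed-point group $\mathbf{H}^F$ of the associated endomorphism on an $F$-stable reductive subgroup $\mathbf{H}\leq\mathbf{G}$. A field automorphism of order $e$ gives $|\mathbf{G}^F|\sim q^{\dim\mathbf{G}/e}\leq|S|^{1/2+o(1)}$; a graph or graph-field piece outside $Q(S)$ produces a proper reductive subgroup $\mathbf{H}$ (e.g.\ of type $B_{(n-1)/2}$ inside $D_n$, of type $F_4$ inside $E_6$), again of strictly smaller dimension; in the $C_n(q)$ diagonal case the centraliser is contained in the image in $S$ of a proper Levi subgroup of $\mathrm{Sp}_{2n}(q)$. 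In each case we obtain $|C_S(f)|\leq|S|^{1-\delta}$ for a $\delta>0$ depending only on the isogeny type. Taking the infimum over the finitely many types, and checking the finitely many low-rank exceptions by hand, yields a uniform $\varepsilon_\ast$.

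The principal technical obstacle is the case analysis for the graph and graph-field automorphisms outside $Q(S)$: one has to verify explicitly (from the Gorenstein--Lyons--Solomon tables, say) that each such $f$ has its fixed-point subgroup of dimension bounded away from $\dim\mathbf{G}$. Once this bookkeeping is organised, the centraliser estimates are standard consequences of the Steinberg description of fixed-point subgroups in reductive groups, and the aggregation into a single $\varepsilon_\ast$ is immediate.
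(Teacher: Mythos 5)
Your reduction to a centraliser bound ($|[S,f]|=|S:\mathrm{C}_S(f)|$, so it suffices to show $|\mathrm{C}_S(f)|\le|S|^{1-\varepsilon_\ast}$) is exactly the paper's starting point, the alternating/sporadic dismissal is right, and your Lang--Steinberg treatment of automorphisms with a genuine field (or graph-field) component is essentially the paper's Case II(a)--(c). The genuine gap is in your trichotomy of what $f\notin Q(S)$ can be: you have omitted the diagonal case for even-dimensional orthogonal groups. For $S=D_n(q)$ and ${}^2D_n(q)$ with $q$ odd, $Q(S)=\mathrm{PGO}_{2n}^{\pm}(q)$ is a \emph{proper} subgroup of $\mathrm{InnDiag}(S)\langle\tau\rangle$ resp.\ $\mathrm{InnDiag}(S)\langle[q]\rangle$: an element $f$ can be a \emph{proper similarity} (multiplier $\lambda$ a non-square), possibly multiplied by the graph automorphism or by $[q]$, and then $f\notin Q(S)$ although its image modulo $\mathrm{InnDiag}(S)$ lies in $Q(S)$ and it has no field component at all. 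This is precisely the case that forces the definition of $Q(S)$ (and it is the paper's Case I together with Cases (d),(e)); your classification into ``field component'', ``graph/graph-field component not absorbed into $Q(S)$'', or ``$C_n(q)$ diagonal'' simply does not see it, so the heart of the lemma is missing.

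Moreover, in the one diagonal case you do address ($S=C_n(q)$, $q$ odd), the justification ``the centraliser is contained in the image of a proper Levi subgroup'' is not enough even if true: a proper Levi such as $\mathrm{GL}_1(q)\times\mathrm{Sp}_{2n-2}(q)$ has order $|S|^{1-o(1)}$ as $n\to\infty$, so containment in \emph{some} proper Levi cannot produce a rank-independent $\varepsilon_\ast$, and you give no argument that the Levi in question is small. What is actually needed (and what the paper does) is: replace $f$ by a suitable odd power so that it becomes semisimple (oddness preserves non-squareness of the multiplier), observe that non-squareness of $\lambda$ forces every eigenvalue to have multiplicity at most $\dim V/2$ (a large rational eigenspace would contain a non-isotropic vector $v$ and give $\lambda(v,v)=\mu^2(v,v)$), and then invoke a conjugacy-class/centraliser estimate uniform in rank and $q$ (the paper uses Lemma 3.4 of Liebeck--Shalev) to get $|\mathrm{C}_S(f)|\le|S|^{1-\delta}$ with $\delta$ absolute. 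Your final aggregation step (``$\delta$ depending only on the type, infimum over finitely many types'') presupposes exactly this rank-uniformity, which is the non-trivial content you have not supplied; the bounded-rank groups (including triality on $D_4$ and the exceptional types) can instead be absorbed by the Babai--Cameron--P\'alfy bound as in Lemma \ref{bdedranklemma}, which is how the paper reduces to classical groups of large rank.
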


\begin{lemma}
\label{bdedranklemma}If $S$ is a finite simple group of rank at most $r$ and
$1\neq f\in\mathrm{Aut}(S)$ then%
\[
\log\left\vert [S,f]\right\vert \geq\varepsilon(r)\log\left\vert S\right\vert
,
\]
where $\varepsilon(r)>0$ depends only on $r$.
\end{lemma}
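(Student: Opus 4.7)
\textbf{Proof plan for Lemma \ref{bdedranklemma}.}

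First, Lemma \ref{notinQ(S)} gives the desired bound with the absolute constant $\varepsilon_{\ast}$ whenever $f\notin Q(S)$, so we may assume $f\in Q(S)\setminus\{1\}$ and at the end take $\varepsilon(r)=\min(\varepsilon_{\ast},\varepsilon'(r))$ for a suitable $\varepsilon'(r)>0$ produced below. The hypothesis $\mathrm{rank}(S)\leq r$ restricts $S$ to three classes: the sporadic simple groups, the alternating groups $\mathrm{Alt}(n)$ with $n\leq r$, and the finite simple groups of Lie type of untwisted Lie rank at most $r$. The first two families contain only finitely many pairs $(S,f)$; since $S$ is non-abelian simple and $f\neq 1$ we have $[S,f]\neq 1$ in each case, so the minimum of $\log|[S,f]|/\log|S|$ over these pairs is a positive constant.

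The remaining case is Lie type of bounded rank with possibly unbounded field size $q$. The map $s\mapsto [s,f]$ on $S$ has fibres of size $|C_S(f)|$, so $\{[s,f]:s\in S\}$ has cardinality $|S:C_S(f)|$ and lies in $[S,f]$; hence $|[S,f]|\geq |S:C_S(f)|$. It therefore suffices to establish
\[
|C_S(f)|\leq |S|^{1-\delta(r)}
\]
for some $\delta(r)>0$ and every $f\in Q(S)\setminus\{1\}$. Both sides are polynomials in $q$ of bounded degree, so this is a strict inequality of $q$-degrees, to be verified inside the ambient adjoint algebraic group $\mathbf{G}$ of rank $\leq r$. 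If $f$ is inner-diagonal, its lift to $\mathbf{G}(\mathbb{F}_q)$ is non-central, and a classical computation in the theory of conjugacy classes in reductive groups shows that the centralizer of any non-central element of $\mathbf{G}$ has dimension at most $\dim\mathbf{G}-c(r)$ with $c(r)>0$. For the graph involutions arising in $Q(D_n(q))$ and $Q({}^2\!D_n(q))$ -- the only outer automorphisms permitted in $Q(S)$ -- the fixed-point subgroup is a proper reductive subgroup of type $B_{n-1}$, hence again of strictly smaller dimension. Passing to $\mathbb{F}_q$-points and allowing for Frobenius twisting yields the displayed inequality for all sufficiently large $q$, and the finitely many small $q$ are absorbed into the finite-case minimum.

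The main obstacle is the uniform centralizer bound for every $f\in Q(S)$: the inner-diagonal case reduces to Steinberg's analysis of semisimple centralizers and is standard, but the graph involutions of $D_n$ and ${}^2\!D_n$ must be handled separately, identifying the fixed subgroup explicitly so that the strict comparison $\dim C_{\mathbf{G}}(f)<\dim\mathbf{G}$ can be read off. Once the two subcases are combined, taking $\delta(r)$ to be any positive lower bound on $c(r)/\dim\mathbf{G}$ (uniformly across the finitely many isogeny types of rank $\leq r$) gives the required $\varepsilon'(r)$, and the lemma follows.
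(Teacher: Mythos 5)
Your reduction of the non-Lie-type and bounded cases is fine, and the centralizer-dimension count for inner-diagonal elements and for the orthogonal graph/field involutions in $Q(S)$ is sound in principle; but the opening step creates a genuine problem. You dispose of all $f\in\mathrm{Aut}(S)\smallsetminus Q(S)$ by citing Lemma \ref{notinQ(S)}, whereas in the paper the logical order is the reverse: the proof of Lemma \ref{notinQ(S)} begins by invoking Lemma \ref{bdedranklemma} to discard the bounded-rank case and then works only with classical groups of large rank. So as written your argument is circular, and the automorphisms outside $Q(S)$ -- field automorphisms, graph and graph-field automorphisms of $A_n$, ${}^2\!A_n$, $E_6$, triality of $D_4$, etc. -- are not actually covered by anything you prove yourself. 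The gap is easy to close within your own framework, since rank is bounded: a field automorphism of order $m\geq2$ has centralizer of order roughly $|S|^{1/m}\leq|S|^{1/2+o(1)}$, and graph and graph-field automorphisms have fixed-point groups of strictly smaller dimension than $\mathbf{G}$, so the same dimension count gives $|C_S(f)|\leq|S|^{1-\delta(r)}$ for large $q$, with small $q$ absorbed into the finite minimum; but this case analysis must be done, not delegated to Lemma \ref{notinQ(S)}.

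It is also worth knowing that the paper's proof is far shorter and avoids all case distinctions: since $f\neq1$ and $S$ has trivial centre, $C=\mathrm{C}_S(f)$ is a proper subgroup, and the main theorem of \cite{BCP} (orders of primitive groups whose non-abelian composition factors have bounded rank) yields $|S:C|\geq|S|^{\varepsilon(r)}$ for \emph{any} proper subgroup $C$ of a simple group of rank at most $r$; combined with $|[S,f]|=|S:C|$ this is the whole proof. Your route, once the missing outer cases are handled directly, amounts to reproving this special case of the Babai--Cameron--P\'alfy bound by centralizer dimension estimates in the ambient algebraic group -- correct, but considerably longer and requiring the classification-style bookkeeping that the black-box citation is designed to avoid.
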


\begin{lemma}
\label{epsilonlemma}Given $\varepsilon>0$, there exists $k(\varepsilon
)\in\mathbb{N}$ with the following property: if $S$ is a finite simple group
and $f\in\mathrm{Aut}(S)$ satisfies $\log\left\vert [S,f]\right\vert
\geq\varepsilon\log\left\vert S\right\vert $ then%
\[
S=\left(  [S,f][S,f^{-1}]\right)  ^{\ast k(\varepsilon)}.
\]

\end{lemma}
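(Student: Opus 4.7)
I would reduce the claim to a diameter bound on products of normal subsets in finite simple groups. Set $T = [S,f]$, regarded as a subset of $S$ inside the semidirect product $S \rtimes \langle f \rangle$. A direct calculation (substituting $s \mapsto s^f$) shows that, as subsets of $S$,
\[
[S, f^{-1}] \;=\; T^{-1},
\]
since $[s^f, f^{-1}] = (s^f)^{-1} s = [s,f]^{-1}$ and $s\mapsto s^f$ is a bijection on $S$. Thus the set whose powers we want to understand is the symmetric set $T\cdot T^{-1}$, which contains the identity.

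The key step is the standard commutator identity $[ts,f] = [t,f]^s \cdot [s,f]$, which rearranges to
\[
[t,f]^s \;=\; [ts,f] \cdot [s,f]^{-1} \;\in\; T \cdot T^{-1}
\]
for all $s,t \in S$. Hence $T \cdot T^{-1}$ contains the full $S$-conjugacy closure
\[
A \;:=\; \bigcup_{s \in S} T^s,
\]
which is a normal subset of $S$, and since $T\subseteq A$ we have $|A|\geq|T|\geq|S|^{\varepsilon}$.

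I would then appeal to a uniform diameter bound for normal subsets of finite simple groups: there exists $k = k(\varepsilon)$ such that every normal subset $A$ of a finite simple group $S$ with $|A| \geq |S|^{\varepsilon}$ satisfies $A^{\ast k} = S$. For groups of Lie type this is the Liebeck--Shalev diameter theorem for conjugacy classes; for alternating groups one uses the analogue due to Larsen--Shalev. One passes from a normal subset to a single class by picking a conjugacy class $C\subseteq A$ with $|C|\geq|A|/N(S)$, where $N(S)$, the number of conjugacy classes of $S$, is polynomially bounded in $\log|S|$ across the families and hence can be absorbed into $\varepsilon$ at the cost of a mildly larger exponent. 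Combining this with the previous paragraph,
\[
(T \cdot T^{-1})^{\ast k} \;\supseteq\; A^{\ast k} \;=\; S,
\]
which gives the lemma with $k(\varepsilon)$ taken from the diameter bound.

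The main obstacle is precisely the uniform normal-subset diameter bound: it is non-elementary and depends on the classification of finite simple groups. By contrast, the reduction via the commutator identity and the passage from $T$ to the normal subset $A$ are purely formal and should be the first thing to try; they are what lets us replace the non-normal set $[S,f]$ by something to which the mixing/diameter machinery directly applies. A minor book-keeping point is the case of bounded $|S|$, which contributes only finitely many pairs $(S,f)$ for each $\varepsilon$ and can be absorbed into $k(\varepsilon)$ by inspection.
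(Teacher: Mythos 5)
Your overall route is the same as the paper's: the commutator identity $[ts,f]=[t,f]^{s}[s,f]$ shows that $[S,f][S,f^{-1}]$ contains the normal subset $A=[S,f]^{S}$ (this is exactly \cite{NS2}, Lemma 3.5, which you reprove correctly, as is the identity $[S,f^{-1}]=[S,f]^{-1}$), and one then applies a Liebeck--Shalev type covering bound to $A$. The paper quotes that bound directly in normal-subset form (Proposition 1.23 of \cite{NS2}, a result from \cite{LiSh}), which gives $A^{\ast k(\varepsilon)}=S$ with $k(\varepsilon)=\lceil c'/\varepsilon\rceil$; so no passage from conjugacy classes to normal subsets is actually needed.

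The genuine gap is in the step where you make that passage yourself. The claim that the class number $N(S)$ is polynomially bounded in $\log|S|$ is false: a rank-$r$ group of Lie type over $\mathbb{F}_q$ has roughly $q^{r}$ classes, which for each bounded-rank family is a positive power of $|S|$ (e.g. $\mathrm{PSL}_2(q)$ has about $q\approx|S|^{1/3}$ classes), and $\mathrm{Alt}(n)$ has $p(n)=e^{\Theta(\sqrt{n})}$ classes, superpolynomial in $\log|S|\sim n\log n$ (though still $|S|^{o(1)}$). Hence ``pick a class $C\subseteq A$ with $|C|\geq|A|/N(S)$ and absorb $N(S)$ into $\varepsilon$'' fails precisely in the bounded-rank case: for $S=\mathrm{PSL}_2(q)$ and $\varepsilon<2/3$ the quantity $|S|^{\varepsilon}/N(S)$ can be less than $1$, so you get no usable lower bound on $|C|$ and no $k$ depending only on $\varepsilon$. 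The statement you want is true, but to deduce it from the single-class theorem you must split cases: for rank at most $r_0(\varepsilon)$ use that \emph{every} nontrivial class already has size at least $|S|^{\delta(r)}$ (centralizer bounds as in Lemma \ref{bdedranklemma}, via \cite{BCP}), while for alternating groups and Lie type of large rank one indeed has $N(S)=|S|^{o(1)}$, so the largest class inside $A$ is large; alternatively, cite the normal-subset form of the Liebeck--Shalev result, as the paper does.
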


\begin{lemma}
\label{ranklemma}For every $\varepsilon>0$ there exists $n\in\mathbb{N}$ such
that if $S$ is a finite simple group of rank at least $n$ and $f\in Q(S) $,
then there exists $s\in S$ such that%
\[
\log\left\vert [S,sf]\right\vert <\varepsilon\log\left\vert S\right\vert .
\]

\end{lemma}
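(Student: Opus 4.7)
My plan is to exploit the identity $|[S,sf]|=|S|/|C_S(sf)|$, where $C_S(sf)$ denotes the subgroup of $S$ fixed by the automorphism $sf$ (the fibres of the map $s'\mapsto[s',sf]=(s')^{-1}(s')^{sf}$ are cosets of $C_S(sf)$). The required inequality is thus equivalent to producing $s\in S$ with $|C_S(sf)|\geq|S|^{1-\varepsilon}$, i.e.\ an element of the coset $Sf$ whose fixed-point subgroup in $S$ is very large. By the classification of finite simple groups, only the alternating groups and the classical groups of Lie type have unbounded rank, so for $n$ large enough I need only treat these two families.

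For $S=\mathrm{Alt}(m)$ with $m$ large, $Q(S)$ embeds into $\mathrm{Sym}(m)$, so $Sf$ is either $\mathrm{Alt}(m)$ or its odd complement; I pick $s$ so that $sf$ is either the identity or a single transposition. In both cases $C_S(sf)\supseteq\mathrm{Alt}(m-2)$, and $\log|\mathrm{Alt}(m-2)|/\log|\mathrm{Alt}(m)|\to 1$ as $m\to\infty$, giving the bound once $m$ is sufficiently large in terms of $\varepsilon$.

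For a classical Lie-type $S$ of rank $d$ over $\mathbb{F}_q$, the point of the definition of $Q(S)$ is that every $f\in Q(S)$ lifts to a form-preserving linear map $\tilde f$ of the natural module $V$ (of dimension proportional to $d$), lying in the appropriate one of $\mathrm{GL}(V),\mathrm{GU}(V),\mathrm{Sp}(V),\mathrm{GO}^{\pm}(V)$. My plan is to find a representative $\tilde s\tilde f$ of the coset of the derived subgroup determined by $\tilde f$ that fixes a non-degenerate subspace $U\leq V$ of bounded codimension $c=c(\varepsilon)$. Concretely: in the linear case I take $\tilde s\tilde f=\mathrm{diag}(\det\tilde f,1,\ldots,1)$; in the symplectic case $Q(S)=\mathrm{Inn}(S)$ and I simply take $\tilde s\tilde f=1$; in the unitary case I use a diagonal small-support analogue; in the orthogonal cases I take $\tilde s\tilde f$ to be either $1$ (inside $\mathrm{P}\Omega^{\pm}$) or a suitable reflection in an anisotropic vector (outside $\mathrm{P}\Omega^{\pm}$) realizing the given outer coset of $\mathrm{PGO}^{\pm}/\mathrm{P}\Omega^{\pm}$. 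In each case $C_S(sf)$ contains the image in $S$ of the full classical group acting on $U$, whose order is at least of the form $q^{(d-c)^{2}}$, beating $|S|^{1-\varepsilon}$ as soon as $d$ is large compared with $\varepsilon$.

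The main obstacle will be the bookkeeping needed to guarantee that the constructed $\tilde s=(\tilde s\tilde f)\tilde f^{-1}$ actually lies in the simply-connected cover of $S$, so that its image $s$ belongs to $S$ itself: this comes down to a determinant computation in each type and, additionally, a spinor-norm computation in the orthogonal types (where the reflection must be chosen in an anisotropic vector whose spinor norm compensates that of $\tilde f$). For $d$ large there is always enough room to arrange this, but the verification for the twisted types $^{2}\!A_{n}$ and $^{2}\!D_{n}$ and for the borderline case $D_{n}$, where the graph automorphism arising from an improper orthogonal transformation lies in $Q(S)$, is the most delicate part of the argument; once these cases are dispatched, the uniform dependence of $n$ on $\varepsilon$ follows from the estimates above.
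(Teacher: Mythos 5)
Your proposal takes essentially the same route as the paper: for rank large enough $S$ is alternating or classical, and you replace $f$ by a representative $sf$ of its coset modulo $\mathrm{Inn}(S)$ having small support (identity or a transposition; a diagonal element with all but one eigenvalue $1$; a bounded product of reflections/transvections in the orthogonal case), whose centralizer contains a classical group of bounded codimension, so that $\log|C_{S}(sf)|/\log|S|\to 1$ uniformly in $q$. One small repair in the orthogonal case: a single reflection always has determinant $-1$, so it cannot realize the coset of $\mathrm{PGO}^{\pm}$ consisting of determinant-one elements of non-square spinor norm (and for $q$ even there are no reflections at all, only orthogonal transvections); you must allow products of two reflections, respectively a transvection, exactly as the paper does by permitting products of at most three reflections/transvections -- the centralizer estimate is unaffected.
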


\noindent\textbf{Proof of Proposition \ref{X(dns)}.} Now $G^{\mathrm{ab}}$ and
$G_{\mathrm{ss}}$ are finite, and $G_{0}=1$. This implies that $G$ has a
semisimple closed normal subgroup $T=\overline{G^{(3)}}$, and the simple
factors of $T$ are precisely the simple chief factors of $G$ (here
$\overline{G^{(3)}}$ denotes the closure of the third term of the derived
series of $G$; see \cite{NS2}, Section 1.1). Thus $T=T_{0}\times T_{1}\times
T_{2}$ where $T_{0}$ is the product of those simple factors $S$ such that
$G=S\mathrm{C}_{G}(S)$, $T_{1}$ is the product of those $S\nleqslant T_{0}$
for which (\ref{Aut<Q(S)}) holds, and $T_{2}$ is the product of the rest. Note
that $T_{0}$ is finite, because $T_{0}\cong G_{\mathrm{ss}}$. We fix a finite
set $\{a_{1},\ldots,a_{d}\}$ of (topological) generators for $G$.

Suppose that $T_{1}$ contains factors of unbounded ranks. Pick a sequence
$(S_{j})$ of simple factors of $T_{1}$ such that $\mathrm{rank}(S_{j}%
)\rightarrow\infty$ and let $T_{3}=\prod_{j\in\mathbb{N}}S_{j}$. Then
$T=T_{3}\times T_{4}$ for a suitable complement $T_{4}$. If $G/T_{4}$ has a
proper dense normal subgroup then so does $G$; so replacing $G$ by $G/T_{4}$
we may assume that $T=T_{3}$.

In view of Lemma \ref{ranklemma}, we can find $s_{ij}\in S_{j}$ such that%
\[
\frac{\log\left\vert [S_{j},s_{ij}a_{i}]\right\vert }{\log\left\vert
S_{j}\right\vert }<\varepsilon_{j}%
\]
for all $i$ and $j$, where $\varepsilon_{j}\rightarrow0$ as $j\rightarrow
\infty$. For each \thinspace$i$ put%
\[
b_{i}=(s_{ij})\cdot a_{i}\in Ta_{i}\text{,}%
\]
and note that $[S_{j},s_{ij}a_{i}]=[S_{j},b_{i}]$. Let $N=\left\langle
b_{1}^{G},\ldots,b_{d}^{G}\right\rangle $ be the normal subgroup of $G$
generated (algebraically) by $b_{1},\ldots,b_{d}$.

Since $\mathrm{Aut}_{G}(S_{j})\neq\mathrm{Inn}(S_{j})$, for each $j$ there
exists $i$ such that $[S_{j},b_{i}]\neq1$, and so $1\neq\lbrack S_{j},N]\leq
S_{j}\cap N$. As each $S_{j}$ is simple it follows that $N$ contains the
(restricted) direct product $P=\left\langle S_{j}\mid j\in\mathbb{N}%
\right\rangle $. Therefore $\overline{N}\geq\overline{P}=T$, and as $a_{i}\in
TN$ for each \thinspace$i$ it follows that $N$ is dense in $G$.

On the other hand, $N\neq G$. To see this, fix $i$ and $j$, set $b=b_{i}$,
$S=S_{j}$, and write $\dag=\dag_{j}:G\rightarrow\mathrm{Aut}_{G}(S)$ for the
natural map. Then $\left\vert G^{\dag}\right\vert \leq\left\vert
\mathrm{Aut}(S)\right\vert \leq\left\vert S\right\vert ^{1+\eta_{j}}$ where
$\eta_{j}\rightarrow0$ as $j\rightarrow\infty$, because $\mathrm{rank}%
(S_{j})\rightarrow\infty$ (see \cite{GLS}, Section 2.5). So%
\[
\left\vert (b^{G})^{\dag}\right\vert =\left\vert [G^{\dag},b^{\dag
}]\right\vert \leq  |G^{\dag}:S^{\dag}| |[S^{\dag},b^{\dag}]| \leq \left\vert S\right\vert ^{\eta_{j}}\cdot\left\vert \lbrack
S,b]\right\vert \leq \left\vert S\right\vert ^{\eta_{j}+\varepsilon_{j}%
}\text{.}%
\]
Now for $n\in\mathbb{N}$ set $X_{n}=\left(  b_{1}^{G}\cup(b_{1}^{-1})^{G}%
\cup\ldots\cup b_{d}^{G}\cup(b_{d}^{-1})^{G}\right)  ^{\ast n}$. Then%
\begin{align*}
\left\vert X_{n}^{\dag}\right\vert  &  \leq(2d\left\vert S\right\vert
^{\eta_{j}+\varepsilon_{j}})^{n}\\
&  <\left\vert S_{j}\right\vert =\left\vert S_{j}^{\dag}\right\vert
\end{align*}
if $\left\vert S_{j}\right\vert >(2d)^{2n}$ and $\eta_{j}+\varepsilon
_{j}<(2n)^{-1}$. This holds for all sufficiently large values of $j$; thus we
may choose a strictly increasing sequence $(j(n))$ and for each $n$ an element
$x_{j(n)}\in S_{j(n)}$ such that%
\[
x_{j(n)}^{\dag_{j(n)}}\notin X_{n}^{\dag_{j(n)}}.
\]
Let $t\in T_{1}$ have $x_{j(n)}$ as its $S_{j(n)}$-component for each $n$.
Then $t^{\dag_{j(n)}}=x_{j(n)}^{\dag_{j(n)}}\notin X_{n}^{\dag_{j(n)}}$ for
every $n$, and so%
\[
t\notin\bigcup_{n=1}^{\infty}X_{n}=N;
\]
hence $N\neq G$ as claimed.

$\medskip$

For the converse, suppose that every simple factor of $T_{1}$ has rank at most
$r$. Let $N$ be a dense normal subgroup of $G$. Then $NT=G$ by Lemma
\ref{3-conds}, since $(G/T)_{0}=G/T$.

Let $a_{1},\ldots,a_{d}$ be as above and choose $c_{i}\in a_{i}T\cap N$. For
each simple factor $S$ of $T_{2}$, there exists $i$ such that conjugation by
$c_{i}$ induces on $S$ an automorphism not in $Q(S)$. Then Lemmas
\ref{epsilonlemma} and \ref{notinQ(S)} show that%
\[
S=\left(  [S,c_{i}][S,c_{i}^{-1}]\right)  ^{\ast k(\varepsilon_{\ast})}.
\]
It follows that%
\[
T_{2}=\left(  \prod_{i=1}^{d}[T,c_{i}][T,c_{i}^{-1}]\right)  ^{\ast
k(\varepsilon_{\ast})}\subseteq N\text{.}%
\]

For each simple factor $S$ of $T_{1}$, there exists $i$ such that conjugation
by $c_{i}$ does not centralize $S$. Since each such $S$ has rank at most $r$,
we see in the same way, now using Lemmas \ref{epsilonlemma} and
\ref{bdedranklemma}, that%
\[
T_{1}=\left(  \prod_{i=1}^{d}[T,c_{i}][T,c_{i}^{-1}]\right)  ^{\ast
k(\varepsilon(r))}\subseteq N.
\]

We conclude that%
\[
\left\vert G:N\right\vert =\left\vert T:T\cap N\right\vert \leq\left\vert
T:T_{1}T_{2}\right\vert =\left\vert T_{0}\right\vert <\infty.
\]
Therefore $N$ is open in $G$ by \cite{NS1}, Theorem 1.1 (=\cite{NS2}, Theorem
5.1), and so $N=G$.

\subsection{Some lemmas}

\noindent\textbf{Proof of Lemma \ref{epsilonlemma}. }$S$ is a simple group and
$f\in\mathrm{Aut}(S)$ satisfies $\log\left\vert [S,f]\right\vert
\geq\varepsilon\log\left\vert S\right\vert $. Put $Y=[S,f]^{S}$ and
$X=[S,f][S,f^{-1}].$ Then $Y^{\ast k(\varepsilon)}=S$ by Proposition 1.23 (a
result from \cite{LiSh}), where $k(\varepsilon)=\left\lceil c^{\prime
}/\varepsilon\right\rceil $; and $X\supseteq Y$ by \cite{NS2}, Lemma 3.5.

\bigskip

\noindent\textbf{Proof of Lemma \ref{bdedranklemma}.} $S$ is a simple group of
rank at most $r$ and $1\neq f\in\mathrm{Aut}(S)$. Then $C=\mathrm{C}_{S}(f)$
is a proper subgroup of $S$, and the main result of \cite{BCP} implies that
$\left\vert S:C\right\vert \geq\left\vert S\right\vert ^{\varepsilon(r)}$
where $\varepsilon(r)>0$ depends only on $r$. It follows that $\left\vert
[S,f]\right\vert =\left\vert S:C\right\vert \geq\left\vert S\right\vert
^{\varepsilon(r)}$.

\bigskip

\noindent\textbf{Proof of Lemma \ref{ranklemma}. }We are given a simple group $S$ and
$f\in Q(S)$. We have to show that if $\mathrm{rank}(S)$ is big enough then
there exists $s\in S$ such that $\log\left\vert [S,sf]\right\vert
<\varepsilon\log\left\vert S\right\vert $ (we will not distinguish between $s
$ and the inner automorphism it induces.)

If $S$ is a large alternating group, we can choose $s$ so that $sf$ is either
$1$ or (conjugation by) a transposition, and the claim is clear.

Otherwise, we may assume that $S$ is a classical group of dimension $n$ over a field of size $q$. Note that when $S$ is
an orthogonal group then $\mathrm{PGO}_{n}^{\epsilon}(q)/\mathrm{PSO}%
_{n}^{\epsilon}(q)$ is generated by a single reflection if $q$ is odd and is
trivial if $q$ is even (or $n$ is odd). At the same time $\mathrm{PSO}_{n}^{\epsilon
}(q)/\mathrm{Inn}(S)$ is generated by a product of two reflections if $q$ is
odd and is generated by a transvection if $q$ is even, see \cite{GLS} section
2.7($\epsilon\in\{\pm1\}$). In all cases, there exists $s\in S$ such that
$sf=h,$ where $h$ is an automorphism of $S$ such that $h$ is a product of at
most three reflections/transvections in case when $S$ is an orthogonal group
or else $h$ is a diagonal element with $n-1$ eigenvalues equal to 1 in case
$S$ is $\mathrm{PSU}_{n}(q)$ or $\mathrm{PSL}_{n}(q)$. In each case,
$\log|C_{S}(h)|/\log|S|\rightarrow1$ as $\mathrm{rank}(S)\rightarrow\infty$
uniformly in $q$. Lemma \ref{ranklemma} follows.

\bigskip

\noindent\textbf{Proof of Lemma \ref{notinQ(S)}. }Now $S$ is a simple group
and $f\in\mathrm{Aut}(S)\smallsetminus Q(S)$. This implies that $S$ is of Lie
type, and in view of Lemma \ref{bdedranklemma} we may assume that $S$ is a
classical group of large rank. We have to show that $\log\left\vert
\mathrm{C}_{S}(f)\right\vert /\log\left\vert S\right\vert \leq1-\varepsilon
_{\ast}$ where $\varepsilon_{\ast}>0$. \medskip

\textbf{Case I} Suppose that $f\in\mathrm{InnDiag}(S)$.$\medskip$

This can only happen when $S$ is an orthogonal group in even dimension or a
symplectic group and $q$ is odd. In both cases the description of
$\mathrm{Inndiag}(S)$ of \cite{GLS} for the classical groups in section 2.7
and the definition of $Q(S)$ show that $f$ is a similarity, i.e. there is some
$\lambda\in\mathbb{F}_{q}^{\ast}$ with $(f(v),f(v))
=\lambda (v,v) $ for all $v\in V$, the natural module for
$S$. Here $(.,.)$ is the natural bilinear form on $V$. Moreover $\lambda$ is not a square in $\mathbb{F}_{q}^{\ast}$, for if
$\lambda=\mu^{2}$ then $\mu^{-1}f=f\in\mathrm{PGO}_{2n}^{\pm}(q)=Q(S)$,
contradiction. We shall call $f$ a \emph{proper} similarity if $\lambda
\not \in (\mathbb{F}_{q}^{\ast})^{2}$. By considering an appropriate odd power
of $f$ we may assume that $f$ is a semisimple element of $\mathrm{GL}(V)$. Let
$t$ be the maximal multiplicity of some eigenvalue of $f$ over the algebraic
closure of $V$. We claim that $t\leq\dim V/2$. For if $t>\dim V/2$ then $t$
belongs to some rational eigenvalue $\mu\in\mathbb{F}_{q}^{\ast}$ and there is
some element $v$ of the $\mu$-eigenspace of $f$ with $(v,v)
\not =0$ (because the maximal dimension of totally isotropic subspaces of $V$
is $\dim V/2$). But then $\lambda (v,v) =(f(v),f(v)) =\mu^{2}(v,v)$, a contradiction since
$\lambda$ is non-square. This establishes the claim. Now Lemma 3.4 of
\cite{LS} (ii) shows that
\[
\left\vert f^{S}\right\vert >c^{\prime}q^{(\dim V-t)\dim V/2}>c^{\prime}q^{(\dim
V)^{2}/4}%
\]
for some constant $c^{\prime}>0$. This means that $\log|\mathrm{C}%
_{S}(f)|/\log|S|$ is bounded away from $1$. \medskip

\textbf{Case II} Suppose that $f \not \in \mathrm{InnDiag}(S)$. \medskip

\textbf{(a)} Assume first that

\begin{itemize}
\item either $S$ is untwisted and $f$ does not involve a graph automorphism or else

\item $S$ is twisted and $f$ has order at least $3$ modulo $\mathrm{InnDiag}%
(S)$.
\end{itemize}

We follow the methods of \cite{NS2}, Subsection \textbf{4.1.5} Let $L$ be the
adjoint simple algebraic group with a Steinberg morphism $F$ such that $S$ is
the socle of $L_{F}$. Then $L_{F}=\mathrm{InnDiag}(S)$ by \cite{GLS} Lemma
2.5.8 (a) and Theorem 2.2.6 (e). We may write $f$ as $f=\phi g$ where
$g\in\mathrm{Inndiag}(S)$ and $\phi$ is a field automorphism of order $m\geq2$
($m\geq3$ if $S$ is twisted). We have $F=\phi^{m}$ if $S$ is untwisted,
$F^{2}=\phi^{m}$ if $S$ is twisted. Since $L$ is connected, Lang's theorem
implies that there is some $g_{0}\in L$ such that $g=g_{0}^{\phi}g_{0}^{-1}$.
Let $x \in L$. The following pair of conditions on $x$ are equivalent: \medskip

(i) $x\in S$ and $x$ is fixed by $\phi g$, 

(ii) $y:=x^{g_{0}}$ is fixed  $\phi$ and $x=y^{{g_{0}}^{-1}%
}$ is fixed by $F$. \medskip

Ignoring the second part of (ii) we see that $|C_S(f)| \leq |L_\phi|$. Now $\log |L_\phi| \sim \dim L \log |K_\phi|$ where $K_\phi$ is the fixed field of $\phi$. On the other hand if $S=L_F$ is untwisted then $\log |S|\sim \dim L\log |K_F|$ where $K_F$ is the fixed field of $F=\phi^m$, while if $S$ is twisted then $\log|S|\sim \dim L \log| K_{F^2}|/2$, where $K_{F^2}$ is the fixed field of $F^2=\phi^m$. We conclude that $\log |L_\phi| \sim \frac{a}{m} \log |S| $ where $a=1$ if $S$ is untwisted and $a=2$ otherwise. In both cases $|C_S(F)| \leq |S|^{2/3}$.
\medskip

The remaining cases are

\begin{description}
\item[(b)] $S=A_{n}(q)$ and $f=g\phi\tau$ for an element $g\in\mathrm{GL}%
_{n+1}(q)$, field automorphism $\phi$ and the graph automorphism $\tau$.

\item[(c)] $S=\,^{2}\!A_{n}(q)$ and $f=g[q]$ for $g\in \mathrm{U}_{n+1}(q)$. Here $[q]$ denotes the automorphism of 
$\mathrm{U}_{n+1}$ induced by the field automorphism $x \mapsto x^q$.

\item[(d)] $S=D_{n}(q)$ and $f=g\phi\tau$ with $g\in\mathrm{Inndiag}(S)$,
where either the field automorphism $\phi$ is nontrivial or $g\in
\mathrm{InnDiag}(S)\smallsetminus\mathrm{PSO}_{n}(q)$ is a proper similarity.

\item[(e)] $S=\,^{2}\!D_{n}(q)$ with $f=g[q]$ where $g\in\mathrm{InnDiag}%
(S)\smallsetminus\mathrm{PSO}_{n}(q)$ is a proper similarity.
\end{description}

We consider these in turn.$\medskip$

\emph{Case (b).} Here $\tau$ is conjugate to the involution $x\mapsto (x^{T})^{-1}$
in $\mathrm{GL}_{n+1}$. Thus we may assume that $f(x)=g^{-1}(x^{T})^{-\phi}g$
for a matrix $g\in\mathrm{GL}_{n+1}$. Put $V=\mathbb{F}_{q}^{n+1}$, the
natural module for $S$ of column vectors. The condition $f(x)=x$ is equivalent
to $(x^{\phi})^{T}gx=g$, which is equivalent to the requirement that $x$
preserve the nondegenerate form $B:V\times V\rightarrow\mathbb{F}_{q}$ defined
by $B(v,w)=(v^{\phi})^{T}gw$. Now the result will follow from

\begin{lemma}
\label{space}Let $V$ be an $m$-dimensional vector space over a finite field
$F$, let $\phi$ be an automorphism of $F$, and let $B$ be a form on $V$ which
is nondegenerate and such that $B(\alpha v,\beta w)=\alpha^{\phi}\beta B(v,w)$
for any $\alpha,\beta\in F$. Then the subgroup $G_{B}$ of elements
$x\in\mathrm{GL}(V)$ that preserve $B$ has size at most $|F|^{m(m+1)/2}$.
\end{lemma}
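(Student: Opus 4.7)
The natural approach is induction on $m$, using the orbit--stabilizer formula for the action of $G_B$ on $V$. The base case $m=1$ is immediate: with $V=Fe$ and $c=B(e,e)\neq0$ (forced by nondegeneracy), elements of $G_B$ are scalars $\lambda\in F^{\ast}$ with $\lambda^{\phi}\lambda=1$, so $|G_B|\leq|F|$. For the inductive step the decisive dichotomy is whether $V$ contains an \emph{anisotropic} vector, i.e.\ some $e$ with $B(e,e)\neq0$.

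\textbf{Case 1 (an anisotropic vector exists).} Pick such $e$ and set $e^{\perp}=\{v\in V:B(e,v)=0\}$. Since $B(e,-)$ is a nonzero $F$-linear functional, $\dim e^{\perp}=m-1$, and because $e\notin e^{\perp}$ we have $V=Fe\oplus e^{\perp}$. A short check (for $v\in e^{\perp}$ in the right radical of $B|_{e^{\perp}}$, expand an arbitrary $u=\alpha e+w\in V$ with $w\in e^{\perp}$ and use $B(e,v)=0$) shows that $B$ restricts to a nondegenerate sesquilinear form of the same type on $e^{\perp}$. The stabilizer $\mathrm{Stab}_{G_B}(e)$ preserves $e^{\perp}$ and, because $V=Fe\oplus e^{\perp}$, restricts \emph{faithfully} into $G_{B|_{e^{\perp}}}$, which has size at most $|F|^{(m-1)m/2}$ by induction. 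Combined with the trivial orbit bound $|F|^{m}$, this gives $|G_B|\leq|F|^{m(m+1)/2}$.

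\textbf{Case 2 (every vector is isotropic).} Expanding $0=B(v+w,v+w)$ gives $B(v,w)+B(w,v)=0$ for all $v,w$. Then
\[
B(w,\alpha v)=\alpha B(w,v)=-\alpha B(v,w),\qquad-B(\alpha v,w)=-\alpha^{\phi}B(v,w),
\]
and these two expressions for $B(w,\alpha v)$ must agree, forcing $\alpha=\alpha^{\phi}$ whenever $B(v,w)\neq0$; by nondegeneracy this yields $\phi=\mathrm{id}$, so $B$ is alternating and $m$ is even. Now pick a hyperbolic pair $e,f$ with $B(e,f)=1$, set $H=\langle e,f\rangle$, and use the standard Witt splitting $V=H\oplus H^{\perp}$ with $B|_{H^{\perp}}$ nondegenerate alternating of dimension $m-2$. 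The orbit of the pair $(e,f)$ under $G_B$ has size at most $|F|^{m}\cdot|F|^{m-1}=|F|^{2m-1}$ (any $xe\in V$; then $xf$ is constrained by the single linear equation $B(xe,xf)=1$). The stabilizer of $(e,f)$ restricts faithfully into $G_{B|_{H^{\perp}}}$, of size at most $|F|^{(m-2)(m-1)/2}$ by induction. The exponents sum to $2m-1+(m-2)(m-1)/2=m(m+1)/2$, as required.

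The main obstacle is Case 2: if one only ever peels off a single vector, the induction breaks when every vector is isotropic, because $B|_{e^{\perp}}$ then has $e$ itself in its radical. Recognizing that total isotropy of the form forces $\phi=\mathrm{id}$ and $B$ alternating --- and then peeling off a whole hyperbolic plane instead --- is the non-routine step; once that is done, the exponent bookkeeping is exactly balanced to give $m(m+1)/2$.
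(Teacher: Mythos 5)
Your proof is correct; note, however, that there is no printed argument to compare it with, since the authors explicitly omit the proof of Lemma \ref{space} as ``an exercise in linear algebra'' -- your write-up supplies that omitted exercise. Your route (orbit--stabilizer plus induction, peeling off an anisotropic line when one exists, and a hyperbolic plane in the totally isotropic case after observing that total isotropy forces $\phi=\mathrm{id}$ and $B$ alternating) is sound: the stabilizers do preserve the relevant perpendicular spaces, the restrictions are faithful, the restricted forms remain nondegenerate, and the exponent counts $m+m(m-1)/2$ and $(2m-1)+(m-2)(m-1)/2$ both equal $m(m+1)/2$. One remark: the dichotomy can be avoided entirely. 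An element $x\in G_B$ is determined by the images $xe_1,\dots,xe_m$ of a fixed basis; having chosen $xe_1,\dots,xe_{j-1}$ (necessarily linearly independent), the vector $xe_j$ must satisfy the $j-1$ linear conditions $B(xe_i,xe_j)=B(e_i,e_j)$ for $i<j$, and since $v\mapsto B(v,\cdot)$ is an injective $\phi$-semilinear map into the dual space, these functionals are linearly independent, leaving at most $|F|^{\,m-j+1}$ choices for $xe_j$; multiplying over $j$ gives $|F|^{m(m+1)/2}$ directly, with no Witt decomposition and no case analysis. That is presumably the sort of argument the authors had in mind, but your version is complete as it stands (the only pedantic point being that the $m=2$ instance of your Case 2 invokes the trivial $m=0$ case of the induction, which is harmless).
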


\noindent We omit the proof, an exercise in linear algebra. Note that Lemma
\ref{space} estimates the fixed points of $f$ in $\mathrm{SL}_{n+1}(q)$ which
is the universal cover $\tilde{S}$ of $S$. The trivial bound $\left\vert
[S,f]\right\vert \geq|[\tilde{S},f]||\mathrm{Z}(\tilde{S})|^{-1}$ together with
$\log|\mathrm{Z}(\tilde{S})|/\log|S|\rightarrow0$ as $\mathrm{rank}(S)\rightarrow\infty$ then
completes the proof of case (b).$\medskip$

\emph{Case (c). }The argument here also follows the idea in \cite{NS2},
Subsection 4.1.5. Let $X$ be the algebraic group $\mathrm{GL}_{n+1}$. Let
$[q]$ be the morphism $x\mapsto x^{[q]}$ of $X$ and let $F$ be the Frobenius
morphism $x\mapsto x^{\tau\lbrack q]}$ whose fixed points on $\mathrm{SL}%
_{n+1}$ is the universal cover $\tilde{S}=\mathrm{SU}_{n+1}(q)$ of $S$. By
Lang's theorem we can write $g=g_{0}^{[q]}g_{0}^{-1}$ for some $g_{0}\in X$.
Since $g$ is fixed by $F$, the element $h:=g_{0}^{-F}g_{0}$ is fixed by $[q]$,
i.e. $h\in\mathrm{GL}_{n+1}(q)$. Let $x \in \mathrm{SL}_{n+1}$. The following two conditions on $x$ are equivalent: \medskip 
\medskip

(i) $x\in \tilde S$ and $x$ is fixed by $[q]g$, \medskip

(ii) $y:=x^{g_{0}} \in \mathrm{SL}_{n+1}$ is fixed by $[q]$ and $x=y^{{g_{0}}^{-1}%
}$ is fixed by $F$. \medskip

 Therefore the
number of fixed points of $[q]g$ on $\tilde{S}$ is equal to the number of
elements $y\in\mathrm{SL}_{n+1}(q)$ such that
\[
(y^{g_{0}^{-1}})^{F}=x^{F}=x=(y^{F})^{g_{0}^{-F}}=y^{g_{0}^{-1}},
\]
equivalently%
\[
(y^{F})^{(g_{0}^{-F}g_{0})}=y.
\]
Observing that for $y\in\mathrm{SL}_{n+1}(q)$ we have $y^{F}=y^{\tau}$ the
last condition becomes $y^{\tau h}=y$. By Case (a) the number of such $y$ is
at most $q^{(n+1)n/2}$, which is about $|\tilde{S}|^{1/2}$. We have proved
that $|[\tilde{S},f]|\geq|\tilde{S}|^{1/2-o(1)}$; the corresponding result for
$\left\vert [S,f]\right\vert $ follows just as in case (b).

$\medskip$

\emph{Case (d).} Let $U$ be the fixed point subgroup of the graph automorphism
$\tau$ in $S=D_{n}(q)$. Then $U=B_{n-1}(q)$ and $\log\left\vert U\right\vert
/\log\left\vert S\right\vert \rightarrow1$ as $n\rightarrow\infty$. Hence it
is enough to prove that $\log\left\vert \mathrm{C}_{S}(f)\cap U\right\vert
/\log\left\vert S\right\vert \leq1-\varepsilon$ for some fixed $\varepsilon
>0$. Now $\mathrm{C}_{S}(f)\cap U$ is contained in the fixed point set of
$g\phi$ on $S$, which has size at most $|S|^{1-\epsilon}$ by Case II (a) if
$\phi\not =1$ and by Case I otherwise.$\medskip$

\emph{Case (e).} This is similar to Case (d) on putting $U=\mathrm{C}%
_{S}([q])\cong B_{n-1}(q)$, noting that $\log\left\vert U\right\vert
/\log\left\vert S\right\vert \rightarrow1$ as $n\rightarrow\infty,$ and
applying Case I to $\mathrm{C}_{S}(g)$.

\subsection{The general case}

Assume now that $G$ is a compact group with $G/G^{0}$ finitely generated. We
will show that $G$ has a proper dense normal subgroup -- say $G$ has DNS -- if
and only if one of the following holds:

\begin{description}
\item[(a)] $G^{\mathrm{ab}}$ is infinite

\item[(b)] $G$ has a strictly infinite semisimple quotient

\item[(c)] $G$ has \emph{Q}-almost-simple quotients of unbounded ranks.
\end{description}

Suppose that $G$ is infinite and abelian. Then either $G$ maps onto an
infinite finitely generated abelian profinite group, or $G^{0}$ has finite
index in $G$. In the first case, $G$ has DNS by a remark in subsection
\ref{profsec}; in the second case, $G$ maps onto a non-trivial torus
(\cite{HM}, Proposition 8.15), hence onto $\mathbb{R}/\mathbb{Z}$: then the
inverse image of $\mathbb{Q}/\mathbb{Z}$ is a dense proper subgroup of $G$.
Thus in general, if (a) holds then $G$ has DNS. If (b) holds, a dense normal
subgroup is provided by the inverse image in $G$ of the restricted direct
product of simple factors in a strictly infinite semisimple quotient of $G$.

Suppose that (c) holds but neither (a) nor (b) does. If $G$ has \emph{Q}%
-almost-simple \emph{finite} quotients of unbounded ranks then so does
$G/G^{0}$, and then $G$ has DNS by subsection \ref{profsec}. Otherwise, there
exists a strictly increasing sequence $(n_{i})$ (with $n_{1}\geq3)$ such that
$G$ maps onto each $\mathrm{Aut}(S_{i})$ where $S_{i}\cong\mathrm{PSO}%
(2n_{i})$ for each $i$. Then for each $i$, the inverse image $D_{i}$ in $G$ of
$\mathrm{Inn}(S_{i})$ has index $2.$ Since $G$ has only finitely many open
subgroups of index $2$, we can replace $(n_{i})$ with an infinite subsequence
and reduce to the case where $D_{i}=D$ is constant. Then $G$ has closed normal
subgroups $N_{i}<D$ such that $D/N_{i}\cong S_{i}$ and $G/N_{i} $ induces
$\mathrm{Aut}(D/N_{i})$ on $D/N_{i}$; replacing $G$ by a quotient we may
assume that $\bigcap_{i=1}^{\infty}N_{i}=1$.

Then $D=\prod_{i}S_{i}$ where $S_{i}=\bigcap_{j\neq i}N_{j}\cong%
\mathrm{PSO}(2n_{i})$. Also $G=D\left\langle y\right\rangle $ where $y^{2}\in
D$ and $y$ acts on $S_{i}$ like $s_{i}\tau_{i},$ where $s_{i}\in
S_{i}=\mathrm{Inn}(S_{i})$ and $\tau_{i}$ is the non-trivial graph
automorphism of $S_{i}$ given by conjugation by the diagonal matrix
$\mathrm{diag}(1,1, \cdots,1,-1) \in\mathrm{O}(2n)$.

Let $\tau=(s_{i}^{-1})_{i\in\mathbb{N}}\cdot y\in Dy$. Then $\tau$ induces
$\tau_{i}$ on each $S_{i}$ and $\tau^{2}\in\mathrm{C}_{G}(D)=1.$ Put
$N=\left\langle \tau^{G}\right\rangle $. Then for each $i$ we have $S_{i}\cap
N\supseteq\lbrack S_{i},\tau]\neq1$, so $S_{i}\cap\overline{N}$ is a
non-trivial closed normal subgroup of $S_{i}$ and so $S_{i}\leq\overline{N}$.
Therefore $D\leq\overline{N}$ and it follows that $\overline{N}=G$.

We claim that $N\neq G$. To see this, observe that $\dim(S_{i})=2n^{2}-n$
while $\dim\left(  \mathrm{C}_{S_{i}}(\tau_{i})\right)  =\dim\left(
\mathrm{O}(2n-1)\right)  =2(n-1)^{2}-n+1$; therefore%

\[
\frac{\dim[S_{i},\tau_{i}]}{\dim(S_{i})}\rightarrow0\text{ as }i\rightarrow
\infty.
\]
This implies that for each $m$ there exist $i(m)\in\mathbb{N}$ and
$s_{i(m)}\in S_{i(m)}\smallsetminus\lbrack S_{i(m)},\tau]^{\ast m}$, and we
may choose $i(m)>i(m-1)$ for each $m>1$. Now let $h\in D$ be such that
$h_{i(m)}=s_{i(m)}$ for all $m$. We claim that $h\notin N$.

Indeed, suppose that%
\[
w=\prod_{j=1}^{m}\tau^{g_{j}}\in D,
\]
where without loss of generality $g_{j}=(g_{j,i})_{i}\in D$. Then $m$ is even
and%
\[
w_{i}=[x_{1},\tau][x_{2}^{\tau},\tau]\ldots\lbrack x_{m-1},\tau][x_{m}^{\tau
},\tau]\in\lbrack S_{i},\tau]^{\ast m}%
\]
where $x_{j}=g_{j,i}$. Therefore $w_{i(m)}\neq h_{i(m)},$ so $h\neq w$.

Thus $N$ is a proper dense normal subgroup of $G$.

\bigskip

For the converse, let $N$ be a proper dense normal subgroup of $G$, and assume
that neither (a) nor (b) holds; we will show that (c) must hold.

$\medskip$

1. If $G^{0}N<G$ we are done by the profinite case. So we may assume that
$G^{0}N=G$.

$\medskip$

2. Let $Z=\mathrm{Z}(G^{0})$. Suppose that $NZ=G$. Then $G^{\prime}\leq N$;
but we have assumed that $G^{\mathrm{ab}}$ is finite, so $N$ has finite index
in $G,$ hence contains $G^{0}$, whence $N=G$, a contradiction. Therefore
$NZ<G$, and replacing $N$ by $NZ$ we may assume that $Z\leq N$. Now replacing
$G$ by $G/Z$ we may suppose that $Z=1$. In this case we have%
\[
G^{0}=\prod_{i\in I}S_{i}%
\]
where each $S_{i}$ is a compact connected simple (and centreless) Lie group.

$\medskip$

3. Put $D=G^{0}\cap N$. Then $[G^{0},N]\leq D<G^{0}$. It follows that%
\[
G^{0}=G^{0\prime}\leq\lbrack G^{0},G]=[G^{0},\overline{N}]\leq\overline{D},
\]
so $D$ is dense in $G^{0}$. Suppose that $S_{i}\nleqq D$ for some $i$. As
$S_{i}$ is abstractly simple, we have $S_{i}\cap D=1$, whence $D\leq
\mathrm{C}_{G^{0}}(S_{i})$, a proper closed subgroup of $G^{0}$. It follows
that $S_{i}\leq D$ for every $i$. Since $D<\overline{D}$ this implies that the
index set $I$ must be infinite.

Since $G/G^{0}$ is finitely generated, we have $G=G^{0}\overline{\left\langle
y_{1},\ldots,y_{d}\right\rangle }$ for some $y_{l}\in N$. Then $[G^{0}%
,y_{l}]\subseteq D$ for each $l$. Applying \cite{NS2}, Proposition 5.18 we
deduce that there exists an infinite subset $J$ of $I$ such that each $y_{l}$
normalizes $S_{i}$ for every $i\in J$. As $\mathrm{N}_{G}(S_{i}) $ is closed
and contains $G^{0}$, it follows that $S_{i}$ is normal in $G$ for every $i\in
J$. We may take $J=\left\{  i\in I\mid S_{i}\vartriangleleft G\right\}  $.

Put $P=\prod_{i\in J}S_{i}$ and $C=\mathrm{C}_{G}(P)$. Suppose that $CN=G$.
Then%
\[
P=P^{\prime}\leq\lbrack P,G]=[P,N]\leq N\text{.}%
\]
Now apply the preceding argument to $G/P$: since $(G/P)^{0}=(\prod_{i\in
I\smallsetminus J}S_{i})P/P$, this implies that $G$ normalizes $S_{j}$ for
infinitely many $j\in I\smallsetminus J$, which contradicts the definition of
$J$.

It follows that $CN<G$. So replacing $N$ by $CN$ and then replacing $G$ by
$G/C$, we may assume that $C=1$. As $C\cap G^{0}=\prod_{i\in I\smallsetminus
J}S_{i}$, this means in particular that $S_{i}\vartriangleleft G$ for all
$i\in I$ (renaming $J$ to $I$), and that $\mathrm{C}_{G}(G^{0})=1$.

Now $\mathrm{Out}(S_{i})$ embeds in $\mathrm{Sym}(3)$ for each $i$. As
$G/G^{0}$ is a finitely generated profinite group, it admits only finitely
many continuous homomorphisms to $\mathrm{Sym}(3)$, so $G$ has an open normal
subgroup $H\geq G^{0}$ such that $H$ induces inner automorphisms on each
$S_{i}$. Then $H=\mathrm{C}_{H}(G^{0})G^{0}=G^{0}$; thus $G/G^{0}$ is finite.
Hence there exists a finite subset $Y$ of $N$ such that $G=G^{0}Y$.

$\medskip$

4. Put $C_{i}=\mathrm{C}_{G}(S_{i})$ and now set $J=\left\{  i\in I\mid
C_{i}S_{i}=G\right\}  $. For $i\in J$ put $K_{i}=\bigcap_{i\neq j\in J}C_{j}$
and $X=\bigcap_{i\in J}C_{i}$. Then for $i\in J$ we have%
\[
S_{i}\cong XS_{i}/X\vartriangleleft K_{i}/X\cong G/C_{i}\cong S_{i},
\]
so $K_{i}=X\times S_{i}$. Hence the image of $G/X$ in the product $\prod_{i\in
J}G/C_{i}\cong\prod_{i\in J}S_{i}$ contains the restricted direct product, and
it follows that $G/X\cong\prod_{i\in J}S_{i}$. Since we have assumed that $G$
has no strictly infinite semisimple quotient, it follows that $J$ is finite.

As $S_{i}\leq D$ for each $i$, we may now replace $G$ by $G/\prod_{i\in
J}S_{i}$, and so assume that $J$ is empty. Then for each $i$ there exists
$y(i)\in Y$ such that $y(i)$ induces an outer automorphism on $S_{i}$.

$\medskip$

5. In the next subsection we will prove

\begin{proposition}
\label{outer}Let $S$ be a compact connected simple and centreless Lie group
and $y$ an outer automorphism of $S$. Then%
\[
S=([S,y]\cdot\lbrack S,y^{-1}])^{\ast k}%
\]
where%
\begin{align*}
k  &  =k_{0}\text{ if }S\ncong\mathrm{PSO}(2n)~~\forall n,\\
k  &  \leq k(n)\text{ if }S\cong\mathrm{PSO}(2n),
\end{align*}
$k_{0}\in\mathbb{N}$ is an absolute constant and $k(n)\in\mathbb{N}$ depends
on $n$.
\end{proposition}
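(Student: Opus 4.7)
The plan is to mirror the proof of Lemma \ref{epsilonlemma} in the compact Lie setting, with \emph{dimension} playing the role of \emph{logarithm of order}. Set $X=[S,y]$ and $Y=[S,y^{-1}]$. A direct commutator manipulation (as in \cite{NS2}, Lemma 3.5) gives the key inclusion
\[
XY=[S,y]\cdot[S,y^{-1}]\supseteq X^{S}:=\{x^{t}\mid x\in X,\ t\in S\}.
\]
Since $y$ is outer and $S$ is centreless one has $X\neq\{1\}$, and simplicity of $S$ forces the closed subgroup generated by $X^{S}$ to be all of $S$.

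Next I would invoke a compact Lie-group analogue of Proposition~1.23 (the Liebeck--Shalev-type bound used in Lemma \ref{epsilonlemma}): there is a function $K\colon(0,1]\to\mathbb{N}$ such that whenever $S$ is a compact connected simple Lie group and $C\subseteq S$ is a closed, conjugation-invariant set containing $1$ with $\dim C\geq\varepsilon\dim S$, one has $C^{\ast K(\varepsilon)}=S$. This follows from a routine thickening argument: iterated products of $C$ strictly increase in dimension until they cover an open neighbourhood of $1$, after which the fact that $C$ topologically generates $S$ (together with compactness of $S$) completes the covering in a bounded number of further steps. Now $X^{S}$ contains the image of the smooth map $s\mapsto[s,y]$, whose differential at the identity is $1-\mathrm{Ad}(y^{-1})$ acting on $\mathfrak{s}$, with kernel $\mathrm{Lie}(C_{S}(y))$; hence $\dim X^{S}\geq\dim S-\dim C_{S}(y)$.

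The remaining task is to bound $\dim C_{S}(y)/\dim S$ from above. Since $\mathrm{Ad}(y)$ preserves an $\mathrm{Aut}(S)$-invariant inner product on $\mathfrak{s}$ it is semisimple, and after conjugation in $\mathrm{Aut}(S)$ we may assume that $y$ normalises a fixed maximal torus and projects modulo $\mathrm{Inn}(S)$ to a diagram automorphism $\sigma$. Standard compact-Lie structure theory then gives $\dim C_{S}(y)\leq\dim S^{\sigma}$. The only compact centreless connected simple Lie groups with nontrivial $\mathrm{Out}$ are $\mathrm{PSU}(n)$ for $n\geq 3$, $\mathrm{PSO}(2n)$ for $n\geq 3$, and $E_{6}$ (adjoint), and direct computation gives the ratios $\dim S^{\sigma}/\dim S$ as follows: $n(n-1)/(2(n^{2}-1))\to 1/2$ for $\mathrm{PSU}(n)$ (where $S^{\sigma}\cong \mathrm{SO}(n)$ or $\mathrm{Sp}(n/2)$); $14/28$ for triality on $\mathrm{PSO}(8)$ (where $S^{\sigma}\cong G_{2}$); $52/78$ for $E_{6}$ with graph automorphism (where $S^{\sigma}\cong F_{4}$); and $(n-1)/n$ for $\mathrm{PSO}(2n)$ with graph automorphism (where $S^{\sigma}\cong B_{n-1}$). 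Outside the family $S\cong\mathrm{PSO}(2n)$ every ratio is bounded above by an absolute constant $1-\varepsilon_{0}<1$, yielding $k_{0}:=K(\varepsilon_{0})$; for $\mathrm{PSO}(2n)$ the ratio tends to $1$, but for each fixed $n$ one may take $k(n):=K(1/n)$.

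The main technical obstacle is establishing the width bound invoked in the second paragraph, a Liebeck--Shalev-type statement in the compact Lie setting that should follow from standard thickening and covering arguments but requires care. The upper bound $\dim C_{S}(y)\leq\dim S^{\sigma}$ in the third paragraph is a consequence of semicontinuity of centralizer dimension within the coset $S\sigma\subset\mathrm{Aut}(S)$, and the remaining case analysis is a routine application of the classification.
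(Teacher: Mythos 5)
Your key step---the ``compact Lie-group analogue of Proposition 1.23'' asserting that a closed conjugation-invariant subset $C\subseteq S$ with $1\in C$ and $\dim C\geq\varepsilon\dim S$ must satisfy $C^{\ast K(\varepsilon)}=S$ for a function $K$ depending only on $\varepsilon$---is false, and this is the heart of the matter rather than a ``routine thickening argument.'' Take $g\in S$ regular (pairwise distinct eigenvalues, say) but very close to the identity. Then $\mathrm{C}_{S}(g)$ is a maximal torus, so $\dim g^{S}=\dim S-\mathrm{rank}(S)$, which exceeds $\tfrac12\dim S$ once the rank is at all large; yet every element of $g^{S}\cup g^{-S}$ lies within some arbitrarily small $\delta$ of the identity in the bi-invariant metric, and hence $\left(g^{S}\cup g^{-S}\cup\{1\}\right)^{\ast k}$ stays within distance about $k\delta$ of $1$ and misses most of $S$, no matter how large the fixed $k$ is. In other words, dimension of a conjugation-invariant set does not control its width in a compact Lie group the way $\log$ of the class size does in a finite simple group, because a large-dimensional conjugacy class can be concentrated arbitrarily near the identity. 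Your intended application $C=X^{S}$ with $X=[S,y]$ happens to avoid this pathology, but showing so is precisely the missing content: one must prove that $[S,y]$ always contains an element at definite angular distance from $1$, and to get an absolute constant $k_{0}$ this bound must be made uniform over the infinite family $\mathrm{PSU}(n)$.

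The paper handles this with a different invariant: the function $\lambda(t)=(\pi r)^{-1}\sum_{i}|l(\alpha_{i}(t))|$ on a maximal torus, which measures total angular displacement and is small exactly when $t$ is close to the identity. Lemma 5.19 of \cite{NS2}, restated here as Proposition \ref{lamb}, gives $S=(g^{S}\cup g^{-S})^{\ast k^{\prime}(\epsilon)}$ whenever $\lambda(g)>\epsilon$, with $k^{\prime}(\epsilon)$ uniform over all compact connected simple Lie groups; this is the correct replacement for your width bound. The remaining input is Lemma \ref{brank}: part (i) produces, for each fixed $S$ and each outer $f$, an element $g\in[S,f]$ with $\lambda$ bounded below by some $\epsilon(S)>0$ via a compactness argument on $\mathrm{Aut}(S)$; part (ii) is an explicit eigenvalue computation giving the uniform bound $\epsilon=(200\pi)^{-1}$ across all $\mathrm{PSU}(n)$ with $n>30$. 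Your reduction to bounding $\dim\mathrm{C}_{S}(y)/\dim S$ away from $1$ outside type $D_{n}$ is in the right spirit and the case analysis is correct, but without replacing the false width lemma by the $\lambda$-based statement (and supplying something like Lemma \ref{brank}(ii) for uniformity on the $A_{n}$ family), the proof does not close.
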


Now let $t\geq3$ and let
\[
J(t)=\left\{  i\in I\mid\text{ }S_{i}\ncong\mathrm{PSO}(2n)~\forall
n>t\right\}  .
\]
Put $k(t)=\max\{k_{0},~k(n)~(n\leq t)\}$. Then for each $i\in J(t)$ we have%
\[
S_{i}=\prod_{y\in Y}([S_{i},y]\cdot\lbrack S_{i},y^{-1}])^{\ast k(t)}\text{.}%
\]
Therefore%
\[
\prod_{i\in J(t)}S_{i}=\prod_{y\in Y}\left(  \left[  \prod\nolimits_{i\in
J(t)}S_{i},y\right]  \cdot\left[  \prod\nolimits_{i\in J(t)}S_{i}%
,y^{-1}\right]  \right)  ^{\ast k(t)}\subseteq N\text{,}%
\]
since the product in the middle is a closed set. As $N<G=G^{0}N$ it follows
that $\prod_{i\in J(t)}S_{i}<G^{0}$; thus $J(t)\neq I$.

Thus for every $t$ there exist $n>t$ and $i\in I$ such that $S_{i}%
\cong\mathrm{PSO}(2n)$, and then $G$ has the \emph{Q}-almost simple quotient
$G/C_{i}\cong\mathrm{Aut}(\mathrm{PSO}(2n))$. Thus (c) holds.

\subsection{More lemmas}

Throughout this section, we take $S$ to be a compact connected simple Lie
group (see for example \cite{H}, Table IV on page 516). We assume that $S$ has
an outer automorphism. Such an automorphism is the product of an inner
automorphism and a non-trivial graph automorphism; this only exists when $S$
has type $A_{n},D_{n},$ or $E_{6}$. We choose and fix a maximal torus $T$ of
$S$, a root system $\Phi$ of characters of $T$, and a set of fundamental roots
$\{\alpha_{1},\ldots,\alpha_{r}\}$. Throughout, $r=r(S)$ will denote the rank
of $S$, and $W$ the Weyl group.

We need not assume that $\mathrm{Z}(S)=1$, but will sometimes for brevity
identify elements of $S$ with the corresponding inner automorphisms.

The function $\lambda:T\rightarrow\lbrack0,1]$ was defined in \cite{NS2},
Subsection 5.5.4:%
\[
\lambda(t)=(\pi r)^{-1}\sum_{i=1}^{r}|l(\alpha_{i}(t))|
\]
where $l(e^{i\theta})=\theta$ for $\theta\in(-\pi,\pi]$.

Proposition \ref{outer} depends on Lemma 5.19 of \cite{NS2} which we restate
here in the following form:

\begin{proposition}
\label{lamb} For each $\epsilon>0$ there is an integer $k=k^{\prime}%
(\epsilon)$ such that if $g\in T$ satisfies $\lambda(g)>\epsilon$ then
$S=(g^{S}\cup g^{-S})^{\ast k}$.
\end{proposition}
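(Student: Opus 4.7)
The plan is to exploit the averaging built into $\lambda(g) > \epsilon$ to produce a rank-uniform lower bound for the Haar measure of a bounded-length product in $g^{S}\cup g^{-S}$, and then invoke a covering theorem for compact simple Lie groups.

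First, I would unpack the hypothesis: $\lambda(g)>\epsilon$ means that the average (over fundamental roots $\alpha_i$) of the normalized arguments $|l(\alpha_i(g))|$ exceeds $\pi\epsilon$. A straightforward pigeonhole shows that a positive proportion (at least $\epsilon/2$) of the fundamental roots $\alpha_i$ satisfy $|l(\alpha_i(g))|>\pi\epsilon/2$, so $|\alpha_i(g)-1|$ is bounded below uniformly in $\epsilon$ for a positive fraction of the fundamental simple root directions. This is a rank-uniform regularity statement — it rules out $g$ being close to any Levi subgroup whose Cartan-subroot-system is too large — and it is precisely stronger than the single-root regularity used in Case~II(a) of the proof of Lemma~\ref{notinQ(S)}.

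Second, I would estimate from below the Haar measure of $(g^{S}\cup g^{-S})^{*2}$ by means of the Weyl integration formula: the relevant integrand on $T$ is a product of $|1-\alpha(g)|$ over positive roots, and the averaging content of the $\lambda$-hypothesis gives control over the geometric mean of these factors rather than their individual minima. This is the point where the $\lambda$-formulation pays off: individual $|1-\alpha(g)|$ can be small, but their product cannot be systematically so when a positive proportion of fundamental roots (and hence, by Weyl-orbit expansion, a positive proportion of all roots) are "activated". Once a bound $\mu\bigl((g^{S}\cup g^{-S})^{*f_0}\bigr)\geq\delta=\delta(\epsilon)>0$ has been secured for some $f_0=f_0(\epsilon)$, I would conclude by a Liebeck--Shalev style covering result for compact simple Lie groups: a symmetric, conjugation-invariant subset of Haar measure at least $\delta$ covers the group in $O_{\delta}(1)$ further multiplications, giving $S=(g^{S}\cup g^{-S})^{*k}$ for $k=k'(\epsilon)$.

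The main obstacle will be step two — making the Weyl-denominator estimate genuinely uniform in the rank of $S$, since high rank means many factors $|1-\alpha(g)|$. An alternative, more structural route would be to build up, for each "activated" fundamental root $\alpha_i$, the associated rank-one subgroup $K_{\alpha_i}\cong\mathrm{SU}(2)$ or $\mathrm{SO}(3)$ inside $(g^{S}\cup g^{-S})^{*f_1}$ — via commutators $[u,g]=g^{u}g^{-1}\in K_{\alpha_i}$ with $u$ in the compact $\alpha_i$-root subgroup, using that $|\alpha_i(g)-1|$ is bounded below — and then to chain together Weyl conjugates of these $K_{\alpha_i}$ via a Bruhat-decomposition argument to exhaust $S$. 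Here again the essential difficulty is keeping the word length uniformly bounded across all ranks, which is exactly where the positive-proportion activation of fundamental roots provided by the $\lambda$-hypothesis is needed.
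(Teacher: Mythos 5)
You have not produced a proof here but a pair of strategies, and the part you yourself flag as ``the main obstacle'' --- uniformity in the rank of $S$ --- is the entire content of Proposition \ref{lamb}; note that the paper itself offers no argument for this statement, but quotes it as a restatement of Lemma 5.19 of \cite{NS2}, so the standard to meet is the (nontrivial) proof given there. Your first step, the pigeonhole extraction of a proportion $\geq\epsilon/2$ of fundamental roots with $|l(\alpha_i(g))|>\pi\epsilon/2$, is fine. But the Weyl-integration route breaks down at exactly the point you lean on it: the hypothesis $\lambda(g)>\epsilon$ allows $\alpha(g)=1$ for a large (even overwhelming) proportion of roots, so $g$ may be badly singular, the Weyl denominator $\prod_{\alpha>0}|1-\alpha(g)|$ can vanish identically, and there is no ``control over the geometric mean'' of the factors --- the claim that the product ``cannot be systematically small'' when a positive proportion of roots are activated is simply false as stated. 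Since each class $g^{S}$ has Haar measure zero, lower-bounding $\mu\bigl((g^{S}\cup g^{-S})^{\ast f_0}\bigr)$ by a rank-independent $\delta(\epsilon)>0$ for a rank-independent $f_0(\epsilon)$ is itself a statement of essentially the same depth as the proposition, and no mechanism for it is given. The concluding covering step is also not free: ``conjugation-invariant symmetric set of measure $\geq\delta$ covers in $O_\delta(1)$ steps, uniformly over all compact simple $S$'' requires a quasirandomness-type input (minimal nontrivial representation dimension growing with rank) for the unbounded-rank case, which you neither state nor prove.

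Your alternative route --- producing, for each activated fundamental root, a nontrivial element of the corresponding rank-one subgroup inside a bounded product of conjugates of $g^{\pm1}$, and then chaining these subgroups --- is much closer in spirit to how such statements are actually proved in \cite{NS2} (where the function $\lambda$ is designed precisely to make the induction rank-uniform), and the local step $u^{-1}g^{u}g^{-1}\in K_{\alpha_i}$ with norm bounded below is sound since $T$ normalizes $K_{\alpha_i}$. But the chaining itself, with word length bounded only in terms of $\epsilon$ and not of $r(S)$, is again the whole difficulty, and ``via a Bruhat-decomposition argument'' is a placeholder rather than an argument: a naive chain through Weyl conjugates of the $K_{\alpha_i}$ costs a number of steps growing with the rank. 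As it stands, the proposal identifies the right tension but closes neither route, so it does not establish the proposition; if you want to pursue it, the second route should be developed following the actual argument for Lemma 5.19 in \cite{NS2} rather than reconstructed from scratch.
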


We shall prove

\begin{lemma}
\label{brank} \emph{(i) }There exists $\epsilon=\epsilon(S)>0$ such that: for
each $f\in\mathrm{Aut}(S)\smallsetminus\mathrm{Inn}(S)$ there exist an element
$g\in\lbrack S,f]$ and a conjugate $g_{1}$ of $g$ with $g_{1}\in T$ and
$\lambda(g_{1})>\epsilon$.\newline\emph{(ii)} If $S=(\mathrm{P})\mathrm{SU}%
(n)$ where $n>30$ we can take $\epsilon(S)=(200\pi)^{-1}$.
\end{lemma}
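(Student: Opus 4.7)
The strategy is to reduce the study of $[S,f]$ to a linear-algebra problem on a maximal torus $T$. After replacing $f$ by an inner conjugate (any automorphism preserves some maximal torus and all maximal tori are $S$-conjugate; the replacement only conjugates $[S,f]$ inside $S$, which is harmless since the conclusion allows $g_{1}$ to be an $S$-conjugate of $g$), we may assume that $f$ normalizes $T$. Then $\phi:T\to T$, $\phi(t)=t^{-1}f(t)$, is a continuous group homomorphism whose image lies in $[S,f]$, and on the Lie algebra $\mathfrak{t}$ the map $\phi$ corresponds to the linear endomorphism $f_{*}-1$, where $f_{*}|_{\mathfrak{t}}=w\circ\tau_{0}$ for some Weyl element $w\in W$ and the nontrivial Dynkin automorphism $\tau_{0}$ associated to the outer class of $f$.

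For part (i), the key observation is that $f_{*}\neq 1$: an automorphism fixing $T$ pointwise commutes with $T$, so it acts on each root space $\mathfrak{g}_{\alpha}$ by a scalar $c_{\alpha}$, and compatibility with the bracket forces $\alpha\mapsto c_{\alpha}$ to be a character of the root lattice realized by conjugation by an element of $T$, contradicting that $f$ is outer. Hence $T':=\phi(T)$ is a subtorus of positive dimension. Since $\lambda$ vanishes only on the intersection of the kernels of all the roots --- a finite subgroup --- and $T'$ is infinite, $\lambda$ is positive somewhere on $T'$, so $\epsilon(f):=\max_{t\in T'}\lambda(t)>0$ by compactness. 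Finally $f|_{T}$ lies in the finite group $W\rtimes\langle\tau_{0}\rangle$, so only finitely many subtori $T'$ arise and we may take $\epsilon(S)=\min_{f}\epsilon(f)>0$.

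For part (ii), let $S=(\mathrm{P})\mathrm{SU}(n)$ with $n>30$. Identifying $\mathfrak{t}$ with $\{\theta\in\mathbb{R}^{n}:\sum\theta_{j}=0\}$, we have $\tau_{0}:\theta\mapsto -\theta$ while $w\in S_{n}$ acts by permutation of coordinates, so $\phi_{*}:\theta\mapsto -(1+w)(\theta)$. The kernel of $1+w$ is spanned by the ``alternating'' vectors $(\ldots,1,-1,1,-1,\ldots)$ supported on the even cycles of $w$, giving $\dim\ker\phi_{*}\leq n/2$ and $\dim\mathrm{image}(\phi_{*})\geq n/2-1>14$; the image equals the orthogonal complement of the kernel in $\mathfrak{t}$. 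We then construct $\mu\in\mathrm{image}(\phi_{*})$ by a case analysis on the cycle structure of $w$ so that at least a positive fraction of the consecutive differences $\mu_{j}-\mu_{j+1}$ satisfy $|l(e^{i(\mu_{j}-\mu_{j+1})})|\geq \pi/2$; taking $g_{1}=\exp\mu$ and summing over the fundamental roots $\alpha_{j}(\theta)=\theta_{j}-\theta_{j+1}$ then yields $\lambda(g_{1})\geq 1/(200\pi)$ once $n>30$.

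The main obstacle is the quantitative construction in (ii). Part (i) is essentially a compactness argument once one knows $f_{*}\neq 1$. In (ii) the difficulty is that the naive candidate $\mu_{0}=(\pi/4)(1,-1,1,-1,\ldots)$ --- which on its own would give $\lambda\approx 1/2$ --- can lie entirely in $\ker\phi_{*}$ for adversarial $w$ (for example $w=(1,2)(3,4)\cdots(n-1,n)$), so the choice of $\mu$ must be adapted to $w$. One way is to use a four-periodic pattern such as $(\pi/4)(1,1,-1,-1,\ldots)$ which survives orthogonal projection onto the image for transposition-type $w$; the general bookkeeping is to project a well-chosen $\mu_{0}$ orthogonally onto $\mathrm{image}(\phi_{*})$ and verify that a constant fraction of the differences $\mu_{j}-\mu_{j+1}$ remain of size $\gtrsim 1$, which is where the margin in the constant $1/(200\pi)$ is consumed.
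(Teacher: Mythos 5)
Part (i) of your argument is essentially correct, and it proceeds by a genuinely different route from the paper. You conjugate $f$ so that it normalizes $T$ (this uses the standard, nontrivial, fact that every automorphism of a compact connected Lie group stabilizes some maximal torus), observe that $t\mapsto t^{-1}f(t)$ is a homomorphism whose image is a positive-dimensional subtorus contained in $[S,f]$, and then get uniformity from the finiteness of the induced action group $W\rtimes\mathrm{Out}(S)$ on $T$. The paper instead argues by sequential compactness in $\mathrm{Aut}(S)$: if no uniform $\epsilon$ existed one extracts a convergent sequence $f_{i(j)}\to f_\infty$ in a fixed outer coset with $[S,f_\infty]\subseteq\mathrm{Z}(S)$, forcing $f_\infty=1$, a contradiction; this avoids the torus-stabilization input and any identification of $f|_T$ with $w\tau_0$. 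Both are legitimate soft arguments; yours buys a concrete description of where the witness $g$ lives (the subtorus $\exp((f_*-1)\mathfrak{t})$), at the price of the extra structural facts you invoke.

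Part (ii), however, has a genuine gap: the quantitative heart of the statement -- producing, for \emph{every} Weyl part $w$ of $f|_T$, an element of $\mathrm{im}(1+w)\cap\mathfrak{t}$ with a definite fraction of large consecutive root values, and checking that this yields the specific constant $(200\pi)^{-1}$ for all $n>30$ -- is only announced, not carried out. Phrases such as ``we then construct $\mu$ by a case analysis on the cycle structure of $w$'', ``one way is to use a four-periodic pattern'', and ``the general bookkeeping is to project a well-chosen $\mu_0$ \ldots and verify'' describe a plan, and you yourself identify exactly this as the main obstacle (the naive alternating vector can lie entirely in $\ker(1+w)$). Nothing in the proposal verifies that the projected vector retains a constant proportion of differences of size bounded below uniformly over all cycle types, nor is the arithmetic leading to $(200\pi)^{-1}$ ever done. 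Note that the paper avoids this dependence on $w$ altogether: it never normalizes $T$ in (ii), but fixes one explicit $a\in\mathrm{SU}(n)$ with eigenvalues $1$, $e^{i\pi/3}$, $-1$, each of multiplicity about $n/3$, sets $b=a^{-1}a^{f}$ (using only that $f$ is inner composed with complex conjugation, so $a^{f}$ has eigenvalues $1$, $e^{-i\pi/3}$, $-1$), and shows via Lemma \ref{scalar} together with a dimension count producing a common (approximate) eigenvector that the eigenvalues of $b$ cannot be $9n/10$-clustered in an arc of angular radius $1/10$; this gives the constant uniformly in $f$ with no case analysis. If you wish to complete your route, you must actually exhibit such a $\mu$ for every permutation $w$ -- for instance $\mu=\pi\sum_{j\in A}(e_j+e_{w(j)})$, corrected to trace zero, with $A$ chosen along the cycles so that $|A\,\triangle\,w(A)|\geq c\,n$ -- and then track the resulting constant; as written, the key step of (ii) is missing.
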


A simple calcuation shows that if $g\in\lbrack S,f]$ then $g^{S}%
\subseteq\lbrack S,f][S,f^{-1}]$ and $g^{-S}\subseteq\lbrack S,f^{-1}][S,f]$,
and so%
\[
(g^{S}\cup g^{-S})^{\ast k}\subseteq\left(  \lbrack S,f][S,f^{-1}%
][S,f][S,f^{-1}]\right)  ^{\ast k}=\left(  [S,f][S,f^{-1}]\right)  ^{\ast2k}.
\]

Now if $r>6$, then $S$ has type $D_{r}$ or $A_{r}$. Among centreless compact
simple groups, the one of type $D_{r}$ is $\mathrm{PSO}(2r)$ and the one of
type $A_{r}$ is $\mathrm{PSU}(r+1)$. So Proposition \ref{outer} will follow
from these results on setting%
\begin{align*}
k_{1}  &  =\max\left\{  k^{\prime}(\epsilon(S))\mid r(S)\leq30\right\}  ,\\
k_{0}  &  =2\max\left\{  k_{1},~k^{\prime}((200\pi)^{-1})\right\}  ,\\
k(n)  &  =2k^{\prime}(\epsilon(\mathrm{PSO}(2n)))
\end{align*}
(this makes sense because only finitely many groups $S$ have rank at most $30
$).

\bigskip

\textbf{Proof of Lemma \ref{brank}(i). }For $g\in S$, choose an $S$-conjugate
$g^{\prime}$ of $g$ inside $T$ and set%
\[
\eta(g)=\max\{l(\alpha(g^{\prime}))\ |\ \alpha\in\Phi\};
\]
as different choices for $g^{\prime}$ lie in the same orbit of $W$ on $T,$
this definition is independent of the choice of $g^{\prime}$. It is clear that
$\eta(g)=0$ is equivalent to $g\in\mathrm{Z}(S)$.

Now given $g\in S\backslash \mathrm{Z}(S)$, we have $\eta(g)=l(\alpha(g^{\prime}))>0$ for some some
root $\alpha\in\Phi$, and there exists $w\in W$ with $\alpha^{w}=\alpha_{j}$
for some $j$. Setting $g_{1}=g^{\prime w}$ we see that%
\[
\lambda(g_{1})\geq(\pi r)^{-1}l(\alpha_{j}(g_{1}))=(\pi r)^{-1}l(\alpha
(g^{\prime}))=(\pi r)^{-1}\eta(g).
\]

Suppose now that the statement (i) is false. Then we can find a sequence
$f_{i}=g_{i}s_{i}\in\mathrm{Aut}(S),$ with $g_{i}\in S$ and $s_{i}$ a
nontrivial graph automorphism, such that
\begin{equation}
\sup\{\eta(g)\mid g\in\lbrack S,f_{i}]\}\rightarrow0\text{ as }i\rightarrow
\infty. \label{limit0}%
\end{equation}
Since $S$ is a compact group and $\mathrm{Out}(S)$ is finite we can find a
subsequence $f_{i(j)}=g_{i(j)}s_{i(j)}$ with $s_{i(j)}=s$ the same nontrivial
graph automorphism for all $j\geq1$ and $(g_{i(j)})_{j}$ converging to an
element $g\in S$. Thus the subsequence $f_{i(j)}$ converges to the
automorphism $f_{\infty}=gs$ in $\mathrm{Aut}(S)$. Now (\ref{limit0}) implies
that $[S,f_{\infty}]\subseteq\mathrm{Z}(S)$ and hence that $f_{\infty}=1$
since $S=[S,S]$, a contradiction since $s=g^{-1}f_{\infty}$ is not inner.

\bigskip

For Lemma \ref{brank}(ii), we fix $S=\mathrm{SU}(n)$ with $n>30$ and choose
$T$ to be the group of diagonal matrices $A=\mathrm{diag}(x_{1},\ldots,x_{n})$
in $S$; then $\Phi$ consists of all characters $\alpha_{i,j}$ defined by
$\alpha_{i,j}(A)=x_{i}x_{j}^{-1}$. The Weyl group $W$ of $S$ is $\mathrm{Sym}%
(n)$ acting on $T$ by permuting the eigenvalues. The function $\lambda
:T\rightarrow\lbrack0,1]$ is given by%
\[
\lambda(A)=(\pi(n-1))^{-1}\sum_{i=1}^{n-1}\left\vert l(x_{i}x_{i+1}%
^{-1})\right\vert .
\]

The only nontrivial graph automorphism of $\mathrm{SU}(n)$ is induced by
complex conjugation of the matrix entries.

\begin{lemma}
\label{scalar} Suppose that $A\in T$ satisfies $\lambda(A^{w})\leq\epsilon$
for every $w\in W$. Let $x_{1},\ldots,x_{n}$ be all the eigenvalues of $A$
listed with multiplicities. Then there exists an eigenvalue $x$ of $A$ such
that $\left\vert l(xx_{i}^{-1})\right\vert <20\pi\epsilon$ for at least
$9n/10$ values of $i\in\{1,\ldots,n\}$.
\end{lemma}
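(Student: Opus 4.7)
The plan is to use a simple averaging argument over the Weyl group $W = \mathrm{Sym}(n)$. The hypothesis that $\lambda(A^w) \leq \epsilon$ for every $w \in W$ unpacks, for each permutation $\sigma$, to
$$\sum_{i=1}^{n-1} |l(x_{\sigma(i)} x_{\sigma(i+1)}^{-1})| \leq \pi(n-1)\epsilon.$$

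First I would average this bound over a uniform random $\sigma \in W$. For each $i$, the ordered pair $(\sigma(i),\sigma(i+1))$ is uniform over the $n(n-1)$ ordered pairs of distinct indices, so the expectation of each of the $n-1$ summands equals $\frac{1}{n(n-1)} \sum_{j \neq k} |l(x_j x_k^{-1})|$. Summing the $n-1$ expectations and comparing with the worst-case bound $\pi(n-1)\epsilon$ yields
$$\sum_{j \neq k} |l(x_j x_k^{-1})| \leq \pi n(n-1)\epsilon.$$

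Next I would apply Markov's inequality: the number of ordered pairs $(j,k)$ with $j \neq k$ and $|l(x_j x_k^{-1})| \geq 20\pi\epsilon$ is at most $n(n-1)/20$. By pigeonhole on $j$ there is an index $j_0$ for which fewer than $(n-1)/20$ of the indices $k \neq j_0$ satisfy $|l(x_{j_0} x_k^{-1})| \geq 20\pi\epsilon$. Setting $x := x_{j_0}$, at least $n - (n-1)/20 = (19n+1)/20$ of the $n$ indices $i$ (including $i = j_0$, for which the $l$-value is $0$) satisfy $|l(x x_i^{-1})| < 20\pi\epsilon$. Since $(19n+1)/20 \geq 9n/10$ holds for every $n \geq 1$, this gives the desired conclusion.

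The entire argument is routine once one recognizes that the hypothesis constrains \emph{every} permutation in $W$ rather than only the sorted-order one, so there is no real obstacle; the numerical assumption $n > 30$ plays no role in this proof and must be needed only for the way the lemma is applied in the proof of Lemma \ref{brank}(ii).
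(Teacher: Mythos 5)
Your proof is correct, and it takes a genuinely different route from the paper. The paper argues by contradiction: assuming the conclusion fails, every eigenvalue has more than $n/10$ ``bad partners'' $x_i$ with $|l(xx_i^{-1})|\geq 20\pi\epsilon$, and one then greedily constructs a single permutation $w$ so that the first $[n/10]$ consecutive pairs in $A^{w}$ are all bad pairs, forcing $\lambda(A^{w})>\epsilon$. Your argument instead averages the hypothesis over all of $W=\mathrm{Sym}(n)$ to get $\sum_{j\neq k}|l(x_jx_k^{-1})|\leq \pi n(n-1)\epsilon$, then applies Markov and pigeonhole. Both are short, but yours has two small advantages: it actually yields the slightly sharper count $(19n+1)/20 \geq 9n/10$ and works for every $n\geq 1$, whereas the paper's greedy construction implicitly needs $n$ at least around $10$--$20$ so that $[n/10]\geq 1$ and $20[n/10]>n-1$ (harmless in context since $n>30$). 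The one microscopic point worth noting is that your final step uses $20\pi\epsilon>0$ (so that $i=j_0$ counts); this is fine since the hypothesis with $\lambda\geq 0$ makes $\epsilon=0$ a degenerate case and the application has $\epsilon=(200\pi)^{-1}$.
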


\begin{proof}
Suppose the claim is false. Then for any eigenvalue $y$ of $A$, at least one
tenth of the other eigenvalues $x$ satisfy $\left\vert l(xx_{i}^{-1}%
)\right\vert \geq20\pi\epsilon$. Hence we can reorder $x_{1},\ldots,x_{n}$ as
$y_{1},\ldots,y_{n}$ so that $\left\vert l(y_{i}y_{i+1}^{-1})\right\vert
\geq20\pi\epsilon$ for each $i=1,\ldots,[n/10]$. This means that
\[
\lambda(\mathrm{diag}(y_{1},\ldots,y_{n}))\geq(\pi(n-1))^{-1}\times
20\pi\epsilon\times\lbrack n/10]>\epsilon,
\]
contradicting the hypothesis.
\end{proof}

\medskip

\textbf{Proof of Lemma \ref{brank}(ii).} Consider an element $a\in
\mathrm{SU}(n)$ which has eigenvalues $1,\omega:=\exp(\pi i/3)$ and $-1$, each
with multiplicity $m:=[(n-1)/3]$. Then $a^{f}$ has eigenvalues $1,\omega^{-1}$
and $-1$ with the same multiplicity $m$.

We claim that the element $b:=a^{-1}a^{f}$ has an $S$-conjugate $b_{1}\in T$
with $\lambda(b_{1})>(200\pi)^{-1}$. Suppose this is not true. Then by Lemma
\ref{scalar} there exist a $b$-invariant subspace $V_{1}$ of $\mathbb{C}^{n}$
with $\dim V_{1}\geq9n/10$ and a complex number $\sigma$ such that all
eigenvalues of $b$ on $V_{1}$ are of the form $\sigma\exp(it)$ with
$\left\vert t\right\vert <1/10$. As a consequence, for any unit vector $v\in
V_{1}$ we have $|b\cdot v-\sigma v|<|1-\exp(i/10)|<1/10$ (because all
eigenvalues of $b-\sigma\mathrm{Id}$ on $V_{1}$ have norm at most
$|1-\exp(i/10)|$).

Whatever the complex number $\sigma$, there is some $\mu\in\{1,\omega,-1\}$
such that $l(\mu\sigma)\in\lbrack\pi/6,5\pi/6]\cup\lbrack-\pi/2,-5\pi/6]$.
This means that $l(\sigma\mu x^{-1})\geq\pi/6$ for each $x=1,-1,\omega^{-1}$.
Therefore $\left\vert \sigma\mu-x\right\vert >1/2$ for each such $x$.

Let $V_{2}$ be the $\mu$-eigenspace of $a$ and let $V_{3}$ be the sum of the
$1$,$-1$ and $\omega^{-1}$-eigenspaces of $a^{f}$. We have $\dim
V_{3}=3m,~\dim V_{2}=m$, while $n\leq3m+3$ and $\dim V_{1}\geq9n/10$. As
$n>30$ we have%
\[
\dim V_{1}+\dim V_{2}+\dim V_{3}>2n,
\]
which implies that $V_{1}\cap V_{2}\cap V_{3}$ is nonempty. Pick a unit vector
$v\in V_{1}\cap V_{2}\cap V_{3}$. Since $b=a^{-1}a^{f}$ we have $ab\cdot
v=a^{f}\cdot v$. We can write $bv=\sigma v+u$ where $u$ is a vector of norm
less than $1/10$. Since $v$ and $\sigma v$ belong to $V_{2}$ we have $ab\cdot
v=\mu\sigma v+u_{1}$ where $u_{1}=au$ has norm less than $1/10$. On the other
hand $v\in V_{3}$ and so we may write $v=w_{1}+w_{2}+w_{3}$ where $w_{1}%
,w_{2},w_{3}$ are $x_{i}$-eigenvectors of $a^{f}$ where $(x_{1},x_{2}%
,x_{3})=(1,\omega^{-1},-1)$. But distinct eigenspaces of a unitary operator
are mutually orthogonal, hence $|w_{1}|^{2}+|w_{2}|^{2}+|w_{3}|^{2}=|v|^{2}%
=1$. Now
\[
\mu\sigma(w_{1}+w_{2}+w_{3})+u_{1}=\mu\sigma v+u_{1}=ab\cdot v=a^{f}\cdot
v=x_{1}w_{1}+x_{2}w_{2}+x_{3}w_{3},
\]
giving%
\[
\sum_{i=1}^{3}(\mu\sigma-x_{i})w_{i}=-u_{1},
\]
and since $\left\vert \mu\sigma-x_{i}\right\vert \geq1/2$ for each $i=1,2,3$
by the choice of $\mu$ this implies that
\[
10^{-2}>\left\vert u_{1}\right\vert ^{2}=\sum_{i=1}^{3}\left\vert \mu
\sigma-x_{i}\right\vert ^{2}\left\vert w_{i}\right\vert ^{2}\geq\frac{1}%
{4}\sum_{i=1}^{3}\left\vert w_{i}\right\vert ^{2}=1/4.
\]
This contradiction completes the proof.

\end{document}